\newtheorem{theo}{Theorem}[section]
\newtheorem{lem}[theo]{Lemma}
\newtheorem{propo}[theo]{Proposition}
\newtheorem{coro}[theo]{Corollary}
\newtheorem*{theointro}{Theorem}
\theoremstyle{definition}
\newtheorem{defi}[theo]{Definition}
\theoremstyle{remark}
\newtheorem{rem}[theo]{Remark}
\newtheorem{ex}[theo]{Example}
\def\R{\mathbb{R}}
\def\Z{\mathbb{Z}}
\def\C{\mathbb{C}}
\def\N{\mathbb{N}}
\def\z{\zeta}
\def\n{\eta}
\def\r{\rho}
\def\a{\alpha}
\def\e{\varepsilon}
\def\d{\delta}
\def\t{\theta}
\def\b{\beta}
\def\n'{\nu}
\def\d{\delta}
\def\l{\lambda}
\def\k{\kappa}
\def\D{\Delta}
\def\G{\Gamma}
\def\L{\Lambda}
\def\T{\Theta}
\def\P{\Phi}
\def\dq {\delta_{q}}
\def\sq {\sigma_{q}}
\begin{document}
\sloppy
\title{$q$-deformation of meromorphic solutions of linear differential equations.}
\author{Thomas Dreyfus}
\address{Université Paul Sabatier - Institut de Mathématiques de Toulouse,}
\curraddr{118 route de Narbonne, 31062 Toulouse}
\email{tdreyfus@math.univ-toulouse.fr}
\thanks{Work supported by the labex CIMI}

\subjclass[2010]{39A13,34M40}

%\keywords{u}

\date{\today}

\begin{abstract}
In this paper, we consider the behaviour, when $q$ goes to $1$, of the set of a convenient basis of meromorphic solutions of a family of linear $q$-difference equations. In particular, we show that, under convenient assumptions, such basis of meromorphic solutions converges, when $q$ goes to~$1$, to a basis of meromorphic solutions of a linear differential equation. We also explain that given a linear differential equation of order at least two, which has a Newton polygon that has only slopes of multiplicities one, and a basis of meromorphic solutions, we may build a family of linear $q$-difference equations that discretizes the linear differential equation, such that a convenient family of basis of meromorphic solutions is a $q$-deformation of the given basis of meromorphic solutions of the linear differential equation.
\end{abstract} 

\maketitle
\tableofcontents
%\dedicatory{u}

\pagebreak[3]
\section*{Introduction}
Let~$q>1$ be a real parameter, and let us define the dilatation operator~$\sq$ 
$$\sq \big(f(z)\big):=f(qz).$$ 

When~$q$ tends to~$1$, the~$q$-difference operator~$\dq:=\frac{\sq-\mathrm{Id}}{(q-1)}$  ``tends'' to the derivation $\d:=z\frac{d}{dz}$. Hence every differential equation may be discretized by a~$q$-difference equation. Given a linear differential equation~$\widetilde{\D}$, and a basis of meromorphic solutions of $\widetilde{\D}$, we wonder if we can build $\D_{q}$, family of linear $q$-difference equations that is a $q$-deformation\footnote{Throughout the paper, we will say that the family of objects $\left(X_{q}\right)_{q>1}$ is a $q$-deformation of the object $\widetilde{X}$, if $X_{q}$ converges, in a certain sense, to $\widetilde{X}$, when $q\rightarrow 1$.} of $\widetilde{\D}$, and a convenient family of basis of meromorphic solutions of $\D_{q}$, that is a $q$-deformation of the basis of meromorphic solutions of $\widetilde{\D}$. The goal of this paper is to give an answer to this problem. 
\begin{center}
$\ast\ast\ast$
\end{center}
Let us consider 
$$\left\{\begin{array}{lll}
\dq Y(z,q)&=&B(z)Y(z,q)\\\\
\d \widetilde{Y}(z)&=&B(z)\widetilde{Y}(z),
\end{array}\right.
$$
where~$B(z)$, is a $m$ by $m$ square matrix with coefficients that are germs of meromorphic functions at $z=0$. We are going to recall the main result of \cite{S00} in the particular case where the above matrix $B(z)$ does not depend upon $q$ and $q>1$ is real. In \cite{S00}, Sauloy assumes that the systems are Fuchsian at~$0$ and the linear differential system has exponents at~$0$ which are non resonant (see \cite{S00},~$\S 1$, for a precise definition). 
The Frobenius algorithm provides a local fundamental solution, i.e, an invertible solution matrix, at~$z=0$,~$\widetilde{\P}_{0}(z)$, of the linear differential system~${\d \widetilde{Y}(z)=B(z)\widetilde{Y}(z)}$. This solution can be analytically continued into an analytic solution on~$\C^{*}$, minus a finite number of lines and half lines of the form~${\R_{>0}\a :=\Big\{ x\a \Big| x\in ]0,\infty[ \Big\}}$ and~${\R_{\geq 1}\b :=\Big\{ x\b \Big| x\in [1,\infty[ \Big\}}$, with~$\a,\b\in \C^{*}$. Note that in Sauloy's paper, the lines and half lines are in fact respectively $q$-spirals and $q$-half-spirals since the author considers the case where $q$ is a complex number such that $|q|>1$.\par
 In \cite{S00},~$\S 1$, the author uses a~$q$-analogue of the 
Frobenius algorithm to construct a local fundamental matrix solution at~$z=0$,~$\P_{0}(z,q)$, of the family of linear~$q$-difference systems ${\dq Y(z,q)=B(z)Y(z,q)}$, which is for a fixed~$q$, meromorphic on~$\C^{*}$
and has its poles contained in a finite number of~$q$-spirals of the form~${ q^{\Z}\a:=\left\{q^{n}\a , n\in \Z\right\}}$ and~${q^{\N^{*}}\b :=\left\{  q^{n}\b, n\in \N^{*} \right\}}$, with~$\a,\b\in \C^{*}$.  Sauloy proves that~$\P_{0}(z,q)$ converges uniformly to~$\widetilde{\P}_{0}(z)$ when~$q\to 1$, in every compact subset of its domain of definition. \\ \par 
The problem in the non Fuchsian case is more difficult. Divergent formal power series may appear as solutions of the linear differential systems, but we may apply to them a Borel-Laplace summation process in order to obtain the existence of germs of meromorphic solutions on sectors of the Riemann surface of the logarithm. See  \cite{B,Ber,LR90,LR95,M95,MR,R93,RM1,S09,VdPS}.  The same situation occurs in the $q$-difference case. See  \cite{Be,Bu,DSK,DVRSZ,DVZ,D4,D3,MZ,R92,RS07,RS09,RSZ,RZ,S04,S04b,Trj,vdPR,Z99,Z00,Z01,Z02,Z03}. Let us give more precisions on \cite{RSZ}. We refer to $\S \ref{sec2}$ for more details. The authors of \cite{RSZ} consider a linear $q$-difference system having coefficients that are germs of meromorphic functions, and having integral slopes. See \cite{RSZ}, $\S 2.2$, for a precise definition. In this case, the work of Birkhoff and Guenther implies that after an analytic gauge transformation, such system may be put into a very simple form, that is in the Birkhoff-Guenther normal form. Moreover, after a formal gauge transformation, a system in the Birkhoff-Guenther normal form may be put into a diagonal bloc system. Then, the authors of \cite{RSZ} build a set of meromorphic gauge transformations, that make the same transformations as the formal gauge transformation, with germs of entries, having poles contained in a finite number of~$q$-spirals of the form $q^{\Z}\a$, with $\a\in \C^{*}$. Moreover, the meromorphic gauge transformations they build are uniquely determined by the set of poles and their multiplicities.
\begin{center}
$\ast\ast\ast$
\end{center}
The paper is organized as follows. In $\S \ref{sec1}$, we make an overview of the local study of the linear differential equations. In particular, we remind how a meromorphic linear differential operator may be factorized, with formal power series, or with germs of  meromorphic functions on some sectors of the Riemann surface of the logarithm. Given a linear differential equation, with coefficients that are germs of meromorphic functions, we also remind the existence of basis of solutions that are germs of meromorphic functions on some sectors of the Riemann surface of the logarithm. In $\S \ref{sec2}$, we summarize the work of \cite{RSZ}. We recall how they attach to a convenient linear $q$-difference system, a set of meromorphic fundamental solution. In $\S \ref{sec3}$, we explain how, under convenient assumptions, we may express the meromorphic solutions of \cite{RSZ}, using Jackson integral, that is a $q$-discretization of the classical integral. In $\S \ref{sec4}$ we consider $q$ as a real parameter we make converges to $1$. 
In $\S \ref{sec41}$ we prove a preliminary result of confluence\footnote{Throughout the paper, we will use the word ``confluence'' to describe the $q$-degeneracy when $q\rightarrow 1$.}. See Theorem \ref{theo2} for a more precise statement.

\begin{theointro}
Let us consider a linear differential equation~$\widetilde{\D}$ and a family of linear~$q$-difference equations~$\D_{q}$ that discretizes~$\widetilde{\D}$. Then, under convenient assumptions, a family of basis of meromorphic solutions of $\D_{q}$ given by \cite{RSZ}, converges, when $q$ goes to $1$, to a basis of meromorphic solutions of $\widetilde{\D}$.
\end{theointro}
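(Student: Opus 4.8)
The plan is to reduce the problem, via the discretization hypothesis and the structure theory recalled in $\S\ref{sec2}$, to a product of two elementary factors whose confluence can be analysed separately. First I would invoke the work of \cite{RSZ}: after an analytic gauge transformation putting $\D_{q}$ into Birkhoff--Guenther normal form, the meromorphic fundamental solution factors as $\P_{0}(z,q)=F_{q}(z)\,E_{q}(z)$, where $F_{q}$ is the meromorphic gauge transformation built in $\S\ref{sec2}$ (uniquely determined by its poles on the $q$-spirals $q^{\Z}\a$) and $E_{q}$ is an explicit block-diagonal solution built from $q$-characters, $q$-exponentials and Theta functions. On the differential side, the Frobenius algorithm together with Borel--Laplace summation gives the analogous factorisation $\widetilde{\P}_{0}(z)=\widetilde{F}(z)\,\widetilde{E}(z)$, where $\widetilde{F}$ is the summed gauge transformation and $\widetilde{E}$ the classical block-diagonal solution (powers $z^{\a}$, logarithms, and exponentials of the form $e^{c/z^{k}}$). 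The theorem then follows once I show $F_{q}\to\widetilde{F}$ and $E_{q}\to\widetilde{E}$, uniformly on compact subsets of a common domain, since the product of uniformly convergent sequences converges.

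The convergence $E_{q}\to\widetilde{E}$ is the easier half: it amounts to a direct confluence computation, checking that each elementary $q$-special function degenerates to its classical counterpart as $q\to 1$. This is where the hypothesis on the slopes enters, since it reduces the blocks to the scalar case and makes $E_{q}$ completely explicit; it is modelled on Sauloy's confluence of the Fuchsian fundamental solution recalled in the introduction, which handles the slope-zero part.

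The heart of the argument, and the main obstacle, is the convergence of the gauge transformation $F_{q}\to\widetilde{F}$. For this I would use the representation of $F_{q}$ by a Jackson integral established in $\S\ref{sec3}$, which exhibits $F_{q}$ as a $q$-analogue of the Borel--Laplace sum, and compare it with the classical Borel--Laplace integral representing $\widetilde{F}$. The delicate points are: (i) the poles of $F_{q}$ lie on the $q$-spirals $q^{\Z}\a$, which coalesce into the half-lines $\R_{>0}\a$ carrying the singular directions of $\widetilde{F}$ as $q\to 1$, so one must work on a sector of the Riemann surface of the logarithm avoiding these limiting directions and check that the domain of definition stabilises; and (ii) one must pass to the limit inside the Jackson integral, showing that it converges to the Laplace integral along the chosen direction, uniformly on compacts. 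I expect (ii) to be the crux: it requires a dominated-convergence estimate bounding the integrand uniformly in $q$, together with the convergence of the $q$-Borel transform of the formal gauge transformation to the classical Borel transform, and these two limits must be controlled simultaneously.

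Once both factors are shown to converge uniformly on compacts, the product $F_{q}E_{q}$ converges to $\widetilde{F}\widetilde{E}=\widetilde{\P}_{0}$, whose columns form a basis of meromorphic solutions of $\widetilde{\D}$, while each $\P_{0}(\cdot,q)$ is a basis of meromorphic solutions of $\D_{q}$ by \cite{RSZ}. This yields the claimed confluence, and matching the normalisations of the two factorisations guarantees that the limiting basis is exactly the prescribed one, completing the proof.
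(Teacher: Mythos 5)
Your architecture --- split the \cite{RSZ} fundamental solution into a gauge factor $F_{q}$ and an explicit factor $E_{q}$, and prove confluence of each factor separately --- is precisely the route the paper deliberately avoids, and both halves of your plan break down. For the gauge factor: there is no Jackson-integral Borel--Laplace representation of $F_{q}$ in $\S\ref{sec3}$. Proposition \ref{propo1} is a $q$-analogue of variation of constants for the entries of the \emph{complete} solution $U_{\Lambda}^{[\l]}=F\hat{H}^{[\l]}\Lambda$: the integrand $u_{\Lambda,j+1,k}^{[\l]}(t)/u_{\Lambda,j,j}^{[\l]}(qt)$ is a ratio of entries of $U_{\Lambda}^{[\l]}$ itself, not a $q$-Borel transform of the formal gauge matrix. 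The factor $F\hat{H}^{[\l]}$ alone is characterized only by its poles on $-\l q^{\Z}$ and its $q$-Gevrey asymptotics, and the paper states at the opening of $\S\ref{sec3}$ that the behaviour as $q\to1$ of the explicit (algebraic) summation computing it ``seems complicated to understand''; Remark \ref{rem2} warns moreover that the $q$-Borel--Laplace sums of \cite{D3}, which do admit the comparison you propose, have in general \emph{no link} with the \cite{RSZ} solutions. The actual proof (Theorem \ref{theo2}) therefore never confronts $F_{q}$ in isolation: it proceeds entrywise by descending induction on the triangular structure, writing $u_{\Lambda,j,k}^{[\l]}=c_{\Lambda,j,k}^{[\l]}u_{\Lambda,j,j}^{[\l]}+u_{\Lambda,j,j}^{[\l]}\int_{0}^{z}\bigl(u_{\Lambda,j+1,k}^{[\l]}(t)/u_{\Lambda,j,j}^{[\l]}(qt)\bigr)\frac{d_{q}t}{t}$, and combining Lemma \ref{lem2} (your dominated-convergence instinct, correctly placed, via the uniform bounds encoded in $\mathbb{B}_{d,a,\e}$ and hypothesis \textbf{(H5)}) with Lemma \ref{lem7}, which shows the connection constants $c_{\Lambda,j,k}^{[\l]}(z,q)\to 0$ using the $q$-Gevrey asymptotics of $\hat{H}^{[\l]}$ along $-\l q^{\Z}$. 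These constants are an essential obstruction with no counterpart in your sketch; ``matching the normalisations of the two factorisations'' does not make them vanish.

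The ``easier half'' also fails as stated: the theta-built solution $\Lambda_{0}=\mathrm{Diag}\bigl(\L_{q,B_{j}}\T_{q}(z)^{\mu_{j}}\times\mathrm{Id}_{m_{j}}\bigr)$ does \emph{not} degenerate to its classical counterpart, because $\T_{q}(z)$ has no limit as $q\to 1^{+}$ (it blows up), so the slope parts $\T_{q}(z)^{\mu_{j}}$ do not converge to exponentials $e^{c/z^{k}}$; only the character part confluences ($\L_{q,1+(q-1)\widetilde{f}_{j,0}}\to z^{\widetilde{f}_{j,0}}$, by \cite{S00}). This is why the paper exploits the freedom in choosing the diagonal solution $\Lambda$ of (\ref{eq8}): Lemma \ref{lem3} replaces the theta powers by products of $q$-exponentials $e_{p^{\ell}}\bigl(-\ell^{-1}\widetilde{f}_{j,\ell}z^{\ell}\bigr)$, a correction $w_{j}$, and a convergent infinite product $\prod_{\nu=-\infty}^{-1}\bigl(1+(q-1)f_{j}^{>0}(q^{\nu}z,q)\bigr)$, engineered precisely so that hypothesis \textbf{(H4)} holds; the admissible direction $d$ is produced by the arithmetic argument of Lemma \ref{lem6}. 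So while you correctly identified the dominated-convergence passage from Jackson to classical integrals as the crux, the factorwise decomposition you build around it collapses at both factors, and the paper's entrywise induction through the variation-of-constants representation is what replaces it.
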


We apply the above result in $\S \ref{sec43}$,  in a particular example, and in $\S \ref{sec42}$, where we prove our main result. See Theorem \ref{theo3} for a more precise statement and \cite{VdPS}, $\S 3.3$, for the  definition of the Newton polygon.

\begin{theointro}
Let us consider a linear differential equation~$\widetilde{\D}$ of order at least two, assume that its Newton polygon has only slopes of multiplicities one, and let us fix a certain basis of meromorphic solutions of $\widetilde{\D}$. Then, there exists $\D_{q}$, family of linear~$q$-difference equations, that is a $q$-deformation of $\widetilde{\D}$, and there exists a family of basis of meromorphic solutions of $\D_{q}$ given by \cite{RSZ}, that is a $q$-deformation of the given basis of meromorphic solutions of $\widetilde{\D}$.
\end{theointro}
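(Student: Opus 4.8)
The plan is to reduce the whole statement to the confluence result of Theorem~\ref{theo2}. Indeed, once we exhibit a single family $\D_{q}$ that discretizes $\widetilde{\D}$, that satisfies the convenient assumptions of Theorem~\ref{theo2}, and whose associated basis of meromorphic solutions given by \cite{RSZ} degenerates, when $q\to 1$, exactly to the prescribed basis of $\widetilde{\D}$, the theorem follows. Thus the entire content lies in an explicit construction of $\D_{q}$ and in a matching of normalizations; there is nothing left to prove about convergence itself, which Theorem~\ref{theo2} already provides.

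For the construction, I would first exploit the multiplicity-one hypothesis. Since $\widetilde{\D}$ has order $\geq 2$ and all the slopes of its Newton polygon have multiplicity one, the number of distinct slopes equals the order, and, after a meromorphic gauge transformation as recalled in \S\ref{sec1}, the slope filtration of the associated system splits into rank-one graded pieces, each carrying a distinct exponential $e^{Q_{i}}$. Each such piece is a first-order differential equation $\d y = b_{i}(z)y$, which I would discretize by the naive substitution $\d \mapsto \dq$, producing a first-order $q$-difference equation $\dq y = b_{i,q}(z)y$ whose coefficient tends to $b_{i}(z)$ as $q\to 1$. Because \cite{RSZ} requires integral slopes, I would, if some slope is only rational, first ramify $z\mapsto t^{r}$ so that all slopes become integers, performing the discretization with the ramified dilation $\sqn$ and descending afterwards. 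Reassembling the rank-one pieces yields a family $\D_{q}$ in Birkhoff-Guenther normal form with integral slopes, to which the machinery of \cite{RSZ} applies, and one checks that its coefficients and Newton polygon are consistent with those of $\widetilde{\D}$, so that $\D_{q}$ genuinely discretizes $\widetilde{\D}$.

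With $\D_{q}$ in hand, Theorem~\ref{theo2} guarantees that the basis of meromorphic solutions of $\D_{q}$ given by \cite{RSZ} converges to \emph{some} basis of meromorphic solutions of $\widetilde{\D}$ adapted to the slopes. It remains to arrange that this limiting basis is the \emph{given} one. Here the multiplicity-one hypothesis is decisive a second time: a sectorial solution attached to a slope of multiplicity one is rigid, being determined by its exponential part up to a single multiplicative constant and a choice of summation sector, so the limiting basis can differ from the prescribed basis only through finitely many constants and through the choice of Borel-Laplace summation directions. I would then use the freedom in the construction of \cite{RSZ}, whose solutions are uniquely pinned down by the location and multiplicities of their poles along the $q$-spirals together with multiplicative constants, to place the poles along the $q$-spirals that tend, under confluence, to the chosen differential directions, and to fix the constants, so that the limit is exactly the prescribed basis.

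The hard part is the interface between the two constructions. The main obstacle is to build $\D_{q}$ so that it simultaneously discretizes $\widetilde{\D}$ and meets the convenient assumptions of Theorem~\ref{theo2} --- integral $q$-slopes, Birkhoff-Guenther form, and the nonresonance and convergence conditions on the exponents --- which is precisely what the multiplicity-one hypothesis makes possible, since it removes Jordan blocks and intra-slope resonances. The subtler point is upgrading ``converges to a basis'' to ``converges to the prescribed basis'': this requires identifying, under confluence, the $q$-summation data of \cite{RSZ} (the $q$-spirals carrying the poles) with the lateral Borel-Laplace summation sectors defining the given meromorphic basis of $\widetilde{\D}$, and tracking the multiplicative constants through the limit $q\to 1$. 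Getting this correspondence of summation directions right, rather than the mere existence of a convergent basis, is where the real care is needed.
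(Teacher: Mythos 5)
Your top-level skeleton is the paper's: reduce everything to Theorem \ref{theo2} by constructing $\D_{q}$ together with a diagonal normalization so that the \cite{RSZ} basis converges to the prescribed one. But the core of your construction has a genuine gap. The factorization $\widetilde{\D}=\left(\d-\widetilde{f}_{m}\right)\dots\left(\d-\widetilde{f}_{1}\right)$ has coefficients $\widetilde{f}_{j}\in\C((z))$ that are in general \emph{divergent} formal series, so your ``naive substitution $\d\mapsto\dq$'' producing $\dq y=b_{i,q}(z)y$ with $b_{i,q}\to b_{i}$ is not available: hypothesis \textbf{(H1)} demands $z\mapsto f_{j}(z,q)\in\C(\{z\})$, and the correct limit is not $\widetilde{f}_{j}$ but its Borel--Laplace sum $\widetilde{S}^{d}\left(\widetilde{f}_{j}\right)$. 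The paper's key move, which your proposal misses entirely, is to define the positive part by the $p$-Borel identity $\hat{\mathcal{B}}_{p,\ell_{j,k}}\left(f_{j,k}^{>0}\right)=\hat{\mathcal{B}}_{\ell_{j,k}}\left(\widetilde{f}_{j,k}^{>0}\right)$ (using \cite{DVZ}), giving convergent coefficients whose limit is the sum of the divergent series (Lemma \ref{lem4}), and to define the non-positive parts not termwise but through the multiplicative relation (\ref{eq17}), which is engineered precisely so that \textbf{(H5)} can later be verified (Lemma \ref{lem5}). You assert ``one checks that the assumptions of Theorem \ref{theo2} are satisfied,'' but \textbf{(H4)} and \textbf{(H5)} are the hard content: \textbf{(H4)} needs the explicit infinite-product solutions of Lemma \ref{lem3} (built from $\L_{q,1+(q-1)\widetilde{f}_{j,0}}$ and $e_{p^{\ell}}$ factors), and \textbf{(H5)} needs the decay estimate (\ref{eq18}) fed by Lemma \ref{lem6}.

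That last lemma points to a second missing idea: before any confluence statement makes sense, one must prove there \emph{exists} a direction $d$ in which all $m-1$ iterated integrals defining $\widetilde{U}^{d}$ converge simultaneously, i.e.\ a sector where every ratio of consecutive exponentials decays. This is Lemma \ref{lem6}, a nontrivial arithmetic argument (Chinese remainder theorem on the denominators $K/\mu_{j}$), and your proposal takes the sector as given. Two smaller points: your ramification detour is unnecessary, since a Newton-polygon slope of multiplicity one is automatically integral (a slope with denominator $\nu$ in lowest terms has multiplicity divisible by $\nu$), so the paper works unramified throughout; and the paper never puts $\D_{q}$ in Birkhoff--Guenther form by hand --- that form enters only through Theorem \ref{theo1} and the \cite{RSZ} machinery applied to the constructed system $C(z,q)$. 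Your mechanism for matching the prescribed basis (choosing the $q$-spiral of poles and constants) gestures at the right freedom, but the paper realizes it concretely by taking $\arg(\l)=d$ and choosing the diagonal solution $\L$ of (\ref{eq8}) in Lemma \ref{lem3}; without the constructions above, your matching step has nothing to act on.
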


Note that we construct explicitly the family $\D_{q}$, and the family of basis of meromorphic solutions of $\D_{q}$.
Remark also that confluence problems in the non Fuchsian case were considered in the papers \cite{DVZ,D3,Z02}, but this article is, to the best of our knowledge, the first to give a confluence result of the solutions built in \cite{RSZ} in the case where the linear $q$-difference equations have several slopes. We refer to Remarks~\ref{rem1} and~\ref{rem2} for a rough statement of the results in \cite{DVZ,D3}. Note also that the results of \cite{DVZ} are deeply used in $\S \ref{sec42}$, in order to construct the family $\D_{q}$. \\ \par 

\textbf{Acknowledgments.}
The author would like to thank Jean-Pierre Ramis and the anonymous referees, for their suggestions to improve the quality of the paper. 

\pagebreak[3]
\section{Local study of linear differential equations}\label{sec1}
In this section, we make a short overview of the local formal and analytic study of linear differential equations. See \cite{VdPS} for more details.

\subsection{Local formal study of linear differential equations}\label{sec11}

Let~$\C[[z]]$ be the ring of formal power series,~$\C((z)):=\C[[z]][z^{-1}]$ be its fraction field,~$\d:=z\frac{d}{dz}$, and consider a monic linear differential equation in coefficients in $\C((z))$:
\begin{equation}\label{eq10}
\widetilde{P}(\widetilde{y})=0.
\end{equation}
 Once for all, we fix a determination of the complex logarithm over $\widetilde{\C}$ we call~$\log$. For $a\in \C$, we write $z^{a}:=e^{a\log(z)}$. 
As we can see in \cite{VdPS}, Theorem 3.1, there exist $\widetilde{f}_{1},\dots,\widetilde{f}_{m}\in \displaystyle\bigcup_{\nu\in \N^{*}}\C\left(\left(z^{1/\nu}\right)\right)$, such that we have the factorization
$$\widetilde{P}=\left(\d -\widetilde{f}_{m}\right)\dots\left(\d -\widetilde{f}_{1}\right).$$
Moreover, for all $1\leq j<k\leq m$, $v_{0}\left(\widetilde{f}_{j}\right)\leq v_{0}\left(\widetilde{f}_{k}\right)$, where $v_{0}$ denotes the $z$-valuation. 
%\pagebreak[3]
%\begin{rem}
%The numbers $\max\left(0,-v_{0}\left(\widetilde{f}_{1}\right)\right),\dots,\max\left(0,-v_{0}\left(\widetilde{f}_{m}\right)\right)$ equal to the slopes of $\widetilde{P}$. The multiplicity of the slope $\max\left(0,-v_{0}\left(\widetilde{f}_{j}\right)\right)$ for $1\leq j\leq m$ corresponds to the number $m_{j}\in \N$, of $f_{j}$ having valuation $\max\left(0,-v_{0}\left(\widetilde{f}_{j}\right)\right)$. See \cite{VdPS}, $\S 3.3$, for a precise definition.
%\end{rem}

It follows that (\ref{eq10}) is equivalent to
\begin{equation}\label{eq13}
\d \widetilde{Y}=\widetilde{C}\widetilde{Y}, \hbox{ where }
 \widetilde{C}:=\begin{pmatrix}
\widetilde{f}_{1} &1&&0\\
&\ddots&\ddots&\\
&&\ddots&1\\
0&&&\widetilde{f}_{m}
\end{pmatrix}.
\end{equation}

\begin{center}\textbf{Until the end of the section, we are going to assume that $\widetilde{f}_{1},\dots,\widetilde{f}_{m}\in \C\left(\left(z\right)\right)$.}\end{center}

The goal of this subsection is to give an explicit form of a fundamental solution for~(\ref{eq13}) in coefficients in a field we are going to introduce now.\\ \par 

Let 
$$\textbf{E}:= z^{-1} \C \left[z^{-1}\right].~$$
We define formally the~differential ring,~$$\textbf{R}:=\C((z))\left[\log, \left(z^{\widetilde{a}}\right)_{\widetilde{a} \in \C},\Big(e\left(\widetilde{\l}\right)\Big)_{\widetilde{\l} \in \textbf{E}} \right],$$
 with the following rules:
\begin{enumerate}
\item The symbols~$\log$,~$\Big(z^{\widetilde{a}}\Big)_{\widetilde{a} \in \C}$ and~$\Big(e\left(\widetilde{\l}\right)\Big)_{\widetilde{\l} \in \textbf{E}}$ only satisfy the following relations:
$$\begin{array}{cclcclccl}
&&&z^{\widetilde{a}+\widetilde{b}}&=&z^{\widetilde{a}}z^{\widetilde{b}},&e\left(\widetilde{\l}_{1}+\widetilde{\l}_{2}\right)&=&
e\left(\widetilde{\l}_{1}\right)e\left(\widetilde{\l}_{2}\right),\\

&&&z^{\widetilde{a}}&=&z^{\widetilde{a}}\in \C((z)) \hbox{ for }\widetilde{a}\in \Z,&e(0)&=&1.
\end{array}~$$
\item The following rules of differentiation
$$\begin{array}{cclcclccl}
\d \log&=&1,&\d z^{\widetilde{a}}&=&\widetilde{a}z^{\widetilde{a}},&\d e\left(\widetilde{\l}\right)&=&\d \left(\widetilde{\l}\right) e\left(\widetilde{\l}\right),
\end{array}~$$
equip the ring with a differential structure, since these rules go to the quotient as can be readily checked.
\end{enumerate}
Proposition 3.22 in \cite{VdPS} tell us that the ring $\textbf{R}$ is an integral domain and its field of fractions $\textbf{K}$ has field of constants  equal to~$\C$. \par 

Let~$\widetilde{L}\in \mathrm{M}_{m}(\C)$, that is a complex $m\times m$ matrix.
Let~$\widetilde{L}=\widetilde{P}\left(\widetilde{D}+\widetilde{N}\right)\widetilde{P}^{-1}$, with~${\widetilde{D}=\mathrm{Diag}\left(\widetilde{d}_{i}\right)}$,~$\widetilde{d}_{i}\in \C$,~$\widetilde{N}$ nilpotent,~${\widetilde{D}\widetilde{N}=\widetilde{N}\widetilde{D}}$ and~$\widetilde{P} \in \mathrm{GL}_{m}(\C)$, that is a complex invertible $m\times m$ matrix, be the Jordan decomposition of the square matrix~$\widetilde{L}$. We construct the matrix 

$$z^{\widetilde{L}}:=\widetilde{P}\mathrm{Diag}\left(z^{\widetilde{d}_{i}}\right)e^{\widetilde{N}\log}\widetilde{P}^{-1}\in \mathrm{GL}_{m}\left( \textbf{K}\right).$$
One may check that it satisfies 
$$ \d z^{\widetilde{L}}=\widetilde{L}z^{\widetilde{L}}=z^{\widetilde{L}}\widetilde{L}.$$
Of course, if~$\widetilde{a}\in \C$ and~$\left(\widetilde{a}\right)\in \mathrm{M}_{1}(\C)$ is the corresponding matrix, we have~$ z^{\widetilde{a}}=z^{\left(\widetilde{a}\right)}$.\par 

Note that the intuitive interpretations of these symbols (resp. of the matrix $z^{\widetilde{L}}$) are~${\log=\log(z)}$,~$z^{\widetilde{a}}=e^{\widetilde{a}\log(z)}$ and~$e\left(\widetilde{\l}\right)=e^{\widetilde{\l}}$ (resp. is $e^{\widetilde{L}log(z)}$). Let~$\widetilde{f}$ be one these above functions (resp. an entry of the above matrix). Then~$\widetilde{f}$ has a natural interpretation as an analytic function on~$ \widetilde{\C}$, where~$\widetilde{\C}$ is the Riemann surface of the logarithm. We will use the analytic function instead of the symbol when we will consider asymptotic solutions in~$\S \ref{sec12}$. For the time being, however, we see them only as symbols.\\ \par

Let us consider~(\ref{eq13}). The Hukuhara-Turrittin theorem (see Theorem 3.1 in \cite{VdPS} for a statement that is trivially equivalent to the following) says that there exists a fundamental solution for (\ref{eq13}) of the form (for $\k\in \N^{*}$, $\mathrm{Id}_{\k}$ denotes the identity matrix of size~$\k$)
$$
\begin{array}{ll}
&\widetilde{H}(z)\mathrm{Diag}\left(z^{\widetilde{L}_{1}}e\left(\widetilde{\l}_{1}\right)\times\mathrm{Id}_{m_{1}},\dots,z^{\widetilde{L}_{r}}e\left(\widetilde{\l}_{r}\right)\times\mathrm{Id}_{m_{r}}\right)\\
:=&\widetilde{H}(z)\begin{pmatrix}
z^{\widetilde{L}_{1}}e\left(\widetilde{\l}_{1}\right)\times\mathrm{Id}_{m_{1}}&&\\
&\ddots&\\
&&z^{\widetilde{L}_{r}}e\left(\widetilde{\l}_{r}\right)\times\mathrm{Id}_{m_{r}}
\end{pmatrix},
\end{array}$$ 
where 
\begin{itemize}
\item~$\widetilde{H}\in \mathrm{GL}_{m}\Big(\C\left(\left(z\right)\right)\Big)$, 
\item~$\widetilde{L}_{j}\in \mathrm{M}_{m_{j}}(\C)$, for $1\leq j\leq r$ with $\sum m_{j}=m$,
\item~$\widetilde{\l}_{j}\in  \textbf{E}$, for $1\leq j\leq r$.
\end{itemize}

%\pagebreak[3]
%\begin{rem}
%The $z^{-1}$-degrees of the $\widetilde{\l}_{j}$ are the slopes of the Newton polygon of (\ref{eq10}) and the $m_{j}$ are the corresponding multiplicities. Again, see \cite{VdPS}, $\S 3.3$, for a precise definition.
%\end{rem}
Note that the Hukuhara-Turrittin theorem works also for an arbitrary linear differential system in coefficients in $\C((z))$ with integral slopes. See \cite{VdPS}, $\S 3.3$, for a precise definition.

\subsection{Local analytic study of linear differential equations}\label{sec12}
Let~$\C\{z\}$ be the ring of germs of analytic functions in the neighbourhood of~$z=0$, and~$\C(\{z\})$ be its fraction field, that is the field of germs of meromorphic functions in the neighbourhood of~$z=0$. 
If~$a,b\in \R$ with~$a<b$, we define~$\mathcal{A}(a,b)$
as the ring of functions that are analytic in some punctured neighbourhood of~$0$ in $$ \overline{S}(a,b):=\left\{z\in \widetilde{\C} \Big|  \arg(z)\in ]a,b[\right\}.$$ 
In this subsection, we assume that  (\ref{eq10}) is a linear differential equation having coefficients in $\C(\{z\})$. We are interested in the existence of a basis of solutions of (\ref{eq10}), that belongs to~$\mathcal{A}(a,b)$, for some~$a<b$. \par We define the family of continuous map~$\left(\r_{a}\right)_{a\in \C}$, from the Riemann surface of the logarithm to itself, that sends~$z$ to~$e^{a\log (z)}$. One has~$\r_{b}\circ \r_{c}=\r_{bc}$ for any~$b,c\in \C$. For~$f\in \mathcal{A}(a,b)$ and~$c\in \C$, we define~$\rho_{c}\left(f\right):=f(z^{c})$. \par

\pagebreak[3]
\begin{defi}\label{defi1}
\begin{trivlist}
\item (1) Let~$k\in \N^{*}$. We define the formal Borel transform of order $k$,~$\hat{\mathcal{B}}_{k}$ as follows:
$$
\begin{array}{llll}
\hat{\mathcal{B}}_{k}:&\C[[z]]&\longrightarrow&\C[[\z]]\\
&\displaystyle\sum_{\ell\in \N} a_{\ell}z^{\ell}&\longmapsto&\displaystyle\sum_{\ell\in \N} \frac{a_{\ell}}{\G\left(1+\frac{\ell}{k}\right)}\z^{\ell},
\end{array}
$$
where~$\G$ is the Gamma function.
\item (2) Let~$d\in \R$ and~$k\in \N^{*}$.
Let~$f$ be a function such that there exists~$\e>0$, such that~${f\in \mathcal{A}(d-\e,d+\e)}$. We say that~$f$ belongs to~$\widetilde{\mathbb{H}}_{k}^{d}$, if~$f$ admits an analytic continuation defined on~$\overline{S}(d-\e,d+\e)$ that we will still call~$f$, with exponential growth of order~$k$ at infinity. This means that there exist constants~$J,L>0$, such that for~$\z\in\overline{S}(d-\e,d+\e)$:
$$|f(\z)|<J\exp\left(L|\z|^{k}\right).$$
\item (3)
 Let~$d\in \R$ and~$k\in \N^{*}$.  We define the Laplace transformations of order~$1$ and $k$ in the direction~$d$  as follow (see \cite{B}, Page 13, for a justification that the maps are defined)
$$
\begin{array}{llll}
\mathcal{L}_{1}^{d}:&\widetilde{\mathbb{H}}_{1}^{d}&\longrightarrow&\mathcal{A}\left(d-\frac{\pi}{2},d+\frac{\pi}{2}\right)\\
&f&\longmapsto& \displaystyle \int_{0}^{\infty e^{id}}z^{-1}f(\z)e^{-\left(\frac{\z}{z}\right)}d\z ,\\\\
\mathcal{L}_{k}^{d}:&\widetilde{\mathbb{H}}_{k}^{d}&\longrightarrow&\mathcal{A}\left(d-\frac{\pi}{2k},d+\frac{\pi}{2k}\right)\\
&g&\longmapsto& \r_{k}\circ  \mathcal{L}_{1}^{d}\circ \r_{1/k}\left(g\right).
\end{array}
$$
\item (4) Let~$d\in \R$,~$k\in \N^{*}$ and $\widetilde{h}\in \C[[z]]$. We say that $\widetilde{h}\in \widetilde{\mathbb{S}}_{k}^{d}$ if $\hat{\mathcal{B}}_{k}\left(\widetilde{h} \right)\in \C\{z\}\cap \widetilde{\mathbb{H}}_{k}^{d}$. In this case, we set 
$$\widetilde{S}^{d}\left(\widetilde{h}\right):=\mathcal{L}_{k}^{d}\circ\hat{\mathcal{B}}_{k}\left(\widetilde{h}\right) \in \mathcal{A}\left(d-\dfrac{\pi}{2k},d+\dfrac{\pi}{2k}\right).$$

\item (5) Let~$d\in \R$ and $\widetilde{h}\in \C((z))$. We say that $\widetilde{h}\in \widetilde{\mathbb{MS}}^{d}$ if there exist $k_{1},\dots,k_{r}\in \N^{*}$, $N\in \N$ and $\widetilde{h}_{k_{1}}\in \widetilde{\mathbb{S}}_{k_{1}}^{d},\dots,\widetilde{h}_{k_{r}}\in\widetilde{\mathbb{S}}_{k_{r}}^{d} $ such that
$z^{N}\widetilde{h}=\widetilde{h}_{k_{1}}+\dots+\widetilde{h}_{k_{r}}$. In this case, we set 
$$\widetilde{S}^{d}\left(\widetilde{h}\right):=z^{-N}\widetilde{S}^{d}\left(\widetilde{h}_{k_{1}}\right)+\dots+
z^{-N}\widetilde{S}^{d}\left(\widetilde{h}_{k_{r}}\right)\in \mathcal{A}\left(d-\dfrac{\pi}{2\e},d+\dfrac{\pi}{2\e}\right),$$
where $\e:=\max(k_{1},\dots,k_{r})$.
\end{trivlist}
\end{defi}

We still consider (\ref{eq13}). Let $\widetilde{H}$ be the matrix given by Hukuhara-Turrittin theorem. Note that the next theorem works also for an arbitrary linear differential system in coefficients in $\C(\{z\})$ with integral slopes.
\pagebreak[3]
\begin{theo}[\cite{B}, Theorem 2, $\S 6.4$, and Theorem 1, $\S 7.2$]\label{theo4}
There exists~$\widetilde{\Sigma}\subset \R$, finite modulo~$2\pi\Z$, called the set of singular directions of (\ref{eq13}), such that for all $d\in \R\setminus \widetilde{\Sigma}$, the entries of $\left(\widetilde{H}_{j,k}\right):=\widetilde{H}$ and $\widetilde{f}_{1},\dots,\widetilde{f}_{m}$ belong to $\widetilde{\mathbb{MS}}^{d}$. Let $\widetilde{S}^{d}\left(\widetilde{H}\right):=\widetilde{S}^{d}\left(\widetilde{H}_{j,k}\right)$. Moreover, 
there exists $\e>0$ such that we get an analytic fundamental solution  $$\widetilde{S}^{d}\left(\widetilde{H}\right)\mathrm{Diag} \left(e^{\widetilde{L}_{1} \log(z)}e^{\widetilde{\l}_{1}\times\mathrm{Id}_{m_{1}}},\dots,e^{\widetilde{L}_{r} \log(z)}e^{\widetilde{\l}_{r}\times\mathrm{Id}_{m_{r}}}\right)\in \mathrm{GL}_{m}\left(\mathcal{A}\left(d-\dfrac{\pi}{2\e},d+\dfrac{\pi}{2\e}\right)\right),$$
 for the linear differential system having coefficients in $\mathcal{A}\left(d-\dfrac{\pi}{2\e},d+\dfrac{\pi}{2\e}\right)$
 \begin{equation}\label{eq7}\d \widetilde{Y}=\widetilde{S}^{d}\left(\widetilde{C}\right)\widetilde{Y}, \hbox{ with }\widetilde{S}^{d}\left(\widetilde{C}\right):=\begin{pmatrix}
\widetilde{S}^{d}\left(\widetilde{f}_{1}\right)&1&&0\\
&\ddots&\ddots&\\
&&\ddots&1\\
0&&&\widetilde{S}^{d}\left(\widetilde{f}_{m}\right)
\end{pmatrix}.\end{equation}
\end{theo}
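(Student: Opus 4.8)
The plan is to deduce the statement from two ingredients: the multisummability of the formal gauge transformation $\widetilde{H}$ (and of the coefficients $\widetilde{f}_{1},\dots,\widetilde{f}_{m}$), and the fact that the summation operators $\widetilde{S}^{d}$ are morphisms of differential algebras. Write the formal fundamental solution furnished by Hukuhara--Turrittin as $\widetilde{H}(z)\widetilde{G}(z)$, where $\widetilde{G}:=\mathrm{Diag}(z^{\widetilde{L}_{1}}e(\widetilde{\l}_{1})\times\mathrm{Id}_{m_{1}},\dots,z^{\widetilde{L}_{r}}e(\widetilde{\l}_{r})\times\mathrm{Id}_{m_{r}})$ is the diagonal factor. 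The entries of $\widetilde{G}$ already have an honest analytic meaning on $\widetilde{\C}$ through $e^{\widetilde{L}_{j}\log(z)}e^{\widetilde{\l}_{j}}$, and they are left unchanged by any summation operator; hence the whole analytic difficulty is concentrated in the formal matrix $\widetilde{H}\in\mathrm{GL}_{m}(\C((z)))$ and in the $\widetilde{f}_{j}$.

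First I would establish the multisummability. The matrix $\widetilde{H}$ is characterized by the relation $\widetilde{C}\widetilde{H}-\d\widetilde{H}=\widetilde{H}\widetilde{B}_{0}$, where $\widetilde{B}_{0}$ is the block-diagonal normal form attached to $\widetilde{G}$; this linear system has only integral slopes, and its levels $k_{1},\dots,k_{r}$ are the slopes of the Newton polygon of (\ref{eq13}), equivalently the orders of the exponentials $\widetilde{\l}_{j}$. After the decomposition into pure levels of Definition \ref{defi1}~(5), one proves that for each level $k$ the Borel transform $\hat{\mathcal{B}}_{k}$ of each entry extends analytically, with exponential growth of order $k$, along every ray $\arg\z=d$ avoiding a finite set of directions; these forbidden directions are those carried by the differences $\widetilde{\l}_{j}-\widetilde{\l}_{j'}$. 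Collecting them over all pairs and all levels, and reducing modulo $2\pi\Z$, defines the finite set $\widetilde{\Sigma}$. For $d\notin\widetilde{\Sigma}$ the iterated Laplace transforms $\mathcal{L}_{k}^{d}$ are then defined, which is exactly the assertion that the entries of $\widetilde{H}$ and the $\widetilde{f}_{j}$ lie in $\widetilde{\mathbb{MS}}^{d}$, with top level $\e$ equal to the largest slope.

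Next I would transfer the formal solution to an analytic one. Fix $d\notin\widetilde{\Sigma}$. On $\mathcal{A}(d-\frac{\pi}{2\e},d+\frac{\pi}{2\e})$ the operator $\widetilde{S}^{d}$ commutes with $\d$ and is multiplicative, because Borel and Laplace transforms intertwine $\d$ and turn products into the relevant products and convolutions (this is built into Definition \ref{defi1} and the results of \cite{B}). Applying $\widetilde{S}^{d}$ to the identity $\d(\widetilde{H}\widetilde{G})=\widetilde{C}\,\widetilde{H}\widetilde{G}$, and using that $\widetilde{S}^{d}$ fixes $\widetilde{G}$, yields $\d(\widetilde{S}^{d}(\widetilde{H})\widetilde{G})=\widetilde{S}^{d}(\widetilde{C})\,\widetilde{S}^{d}(\widetilde{H})\widetilde{G}$, so $\widetilde{S}^{d}(\widetilde{H})\widetilde{G}$ solves (\ref{eq7}). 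It is invertible on a small enough sector because $\widetilde{S}^{d}(\widetilde{H})$ is asymptotic to $\widetilde{H}\in\mathrm{GL}_{m}(\C((z)))$, so its determinant is asymptotic to the nonzero series $\det\widetilde{H}$ and cannot vanish near $0$; the half-opening $\frac{\pi}{2\e}$ of the sector is precisely the one produced by a Laplace transform of top order $\e$.

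The genuinely hard point is the multisummability in the first step, namely the analytic continuation and the exponential bounds for the Borel transforms, together with the identification of $\widetilde{\Sigma}$. Rather than estimating the convolution equations satisfied by the $\hat{\mathcal{B}}_{k}(\widetilde{H}_{j,k})$ directly, I would argue through the Ramis--Sibuya theorem: the Main Asymptotic Existence Theorem produces genuine fundamental solutions asymptotic to $\widetilde{H}\widetilde{G}$ on the sectors of a good covering of a punctured neighbourhood of $0$, and on the overlaps the quotients of two such solutions are the Stokes matrices, which one checks to be exponentially flat of the orders prescribed by the slopes. Ramis--Sibuya then promotes the sectorial asymptotics of $\widetilde{H}$ to $k$-summability at each level, hence to multisummability, the finitely many overlap directions where flatness fails being exactly the elements of $\widetilde{\Sigma}$. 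This is the route of \cite{B}, to which we refer for the details.
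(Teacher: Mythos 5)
Your proposal is correct and takes essentially the same approach as the paper: the paper gives no independent proof of Theorem \ref{theo4}, importing it verbatim from \cite{B} (Theorem 2, $\S 6.4$, and Theorem 1, $\S 7.2$), and your route --- multisummability of the entries of $\widetilde{H}$ and of the $\widetilde{f}_{j}$ via the Main Asymptotic Existence Theorem, exponential flatness of the Stokes cocycle by levels and Ramis--Sibuya, with $\widetilde{\Sigma}$ read off from the anti-Stokes directions of the differences $\widetilde{\lambda}_{j}-\widetilde{\lambda}_{k}$, then the differential-algebra morphism property of $\widetilde{S}^{d}$ to transport the formal fundamental solution and a determinant-asymptotics argument for invertibility on a sector of half-opening $\frac{\pi}{2\e}$ --- is exactly the proof in the cited source. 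The points you defer (well-definedness and multiplicativity of $\widetilde{S}^{d}$, and the splitting of a multisummable series into $k_{j}$-summable pieces required by the paper's Definition \ref{defi1}(5)) are precisely the results of \cite{B} that the paper invokes, so nothing is missing relative to what the paper itself establishes.
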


Note that for all $d\in \R\setminus \widetilde{\Sigma}$, we have also $$\widetilde{P}=\left(\d -\widetilde{f}_{m}\right)\dots\left(\d -\widetilde{f}_{1}\right)=\left(\d -\widetilde{S}^{d}\left(\widetilde{f}_{m}\right)\right)\dots\left(\d -\widetilde{S}^{d}\left(\widetilde{f}_{1}\right)\right),$$
which gives us an analytic basis of solutions of (\ref{eq10}), that belongs to the ring~$\mathcal{A}\left(d-\dfrac{\pi}{2\e},d+\dfrac{\pi}{2\e}\right)$.\\ \par 
 As a matter of fact, as we can see in Page $239$ of \cite{VdPS}, $$\widetilde{S}^{d}\left(\widetilde{H}\right)\mathrm{Diag} \left(e^{\widetilde{L}_{1} \log(z)}e^{\widetilde{\l}_{1}\times\mathrm{Id}_{m_{1}}},\dots,e^{\widetilde{L}_{r} \log(z)}e^{\widetilde{\l}_{r}\times\mathrm{Id}_{m_{r}}}\right)\in \mathrm{GL}_{m}\left(\mathcal{A}\left(d_{l}-\dfrac{\pi}{2\e},d_{l+1}+\frac{\pi}{2\e}\right)\right),$$ where the directions~${d_{l},d_{l+1}\in \widetilde{\Sigma}}$ are chosen such that~${\big]d_{l},d_{l+1}\big[ \bigcap \widetilde{\Sigma}=\varnothing}$. \\ \par 

Note that by definition, the analyticity holds on a subset of~$\widetilde{\C}$. 
Ramis has used the family of solutions $\left(\widetilde{S}^{d}\left(\widetilde{H}\right)\mathrm{Diag} \left(e^{\widetilde{L}_{1} \log(z)}e^{\widetilde{\l}_{1}\times\mathrm{Id}_{m_{1}}},\dots,e^{\widetilde{L}_{r} \log(z)}e^{\widetilde{\l}_{r}\times\mathrm{Id}_{m_{r}}}\right)\right)_{d\in \R\setminus \widetilde{\Sigma}}$ to build topological generators for the local differential Galois group of (\ref{eq13}). See Chapter 8 of \cite{VdPS} for more details. Remark also that the above family of solutions are involved in the local analytic classification of linear differential equations in coefficients in $\C(\{z\})$ with integral slopes. 

\pagebreak[3]
\section{Local study of linear $q$-difference equations}\label{sec2}
In this section, we make a short overview of the local formal and analytic classification of linear $q$-difference equations. See \cite{RSZ} for more details. 
\subsection{Local formal study of linear $q$-difference equations}\label{sec21}
Let~$q>1$ be fixed. Let us consider the monic linear $q$-difference equation in coefficients in $\C(\{z\})$
$$Q(y)=0.$$ 
As we can deduce from \cite{MZ}, $\S 3.1$, a $q$-difference operator may be factorized: there exist $g_{1},\dots,g_{m}\in \C(\{z^{1/\nu}\})\setminus \{0\}$, for some $\nu\in \N^{*}$, with for all $1\leq j<k\leq m$, $v_{0}\left(g_{j}\right)\leq v_{0}\left(g_{k}\right)$, such that 
$$Q=(\sq-g_{m})\dots (\sq-g_{1}).$$
The $q$-difference equation $Q(y)=0$
is equivalent to $P(y)=0$ with (remind that $\dq=\frac{\sq-\mathrm{Id}}{q-1}$)
$$P:=\left(\dq -f_{m}\right)\dots\left(\dq -f_{1}\right),$$
where $f_{1}:=\frac{g_{1}-1}{q-1},\dots,f_{m}:=\frac{g_{m}-1}{q-1}$. 
\begin{center}
\textbf{Until the end of the section we will assume that $g_{1},\dots,g_{m}\in \C(\{z\})\setminus \{0\}$.}\end{center}
 It follows that $P(y)=0$ is equivalent to 
\begin{equation}\label{eq15}
\sq Y=CY, \hbox{ where }
C:=\begin{pmatrix}
1+(q-1)f_{1} &q-1&&0\\
&\ddots&\ddots&\\
&&\ddots& q-1\\
0&&& 1+(q-1)f_{m}
\end{pmatrix}.
\end{equation}
\pagebreak[3]
\begin{rem}
The opposite of $v_{0}\left(g_{1}\right),\dots,v_{0}\left(g_{m}\right)$ equal to the slopes of $Q$. The multiplicity of the slope $-v_{0}\left(g_{j}\right)$ for $1\leq j\leq m$ corresponds to the number $m_{j}\in \N$, of $g_{i}$ having valuation $v_{0}\left(g_{j}\right)$. See \cite{RSZ}, $\S 2.2$, for a precise definition.
\end{rem}

 Let~$K_{0}$ be a sub-field of ${\C\left(\left(z\right)\right)}$, stable by~$\sq$. Let~$A,B\in \mathrm{GL}_{m}\left(K_{0}\right)$. The two~$q$-difference systems,~$\sq Y=AY$ and~$\sq Y=BY$ are equivalent over~$K_{0}$, if there exists~${P\in \mathrm{GL}_{m}(K_{0})}$, called gauge transformation, such that
$$A=P[B]_{\displaystyle\sq}:=(\sq P)BP^{-1}.$$ In particular,
$$\sq Y=BY\Longleftrightarrow\sq \left(PY\right)=APY.$$ \par
Conversely, if there exist $A,B,P\in \mathrm{GL}_{m}(K_{0})$ such that
$\sq Y=BY$, $ \sq Z=AZ$ and $Z=PY$, then 
$$A=P\left[ B\right]_{\displaystyle\sq}.$$
Note that the next theorem works also for an arbitrary linear $q$-difference system in coefficients in $\C((z))$ with integral slopes. From \cite{RSZ},~$\S 2.2$, we may deduce:
\pagebreak[3]
\begin{theo}\label{theo6}
Let $C$ be defined in (\ref{eq15}).
 We have existence and uniqueness of 
\begin{itemize}
\item $B_{j}\in \mathrm{GL}_{m_{j}}(\C)$, matrices in the Jordan normal form with $\sum m_{j}=m$,
\item $\mu_{1}<\dots <\mu_{r}$ elements of $\Z$,
\item $\hat{G}\in \mathrm{GL}_{m}\Big(\C\left(\left(z\right)\right)\Big)$, with diagonal entries that have $z$-valuation $0$ and that have constant terms equal to $1$, 
\end{itemize} such that: 
$$C=\hat{G}\left[D\right]_{\displaystyle\sq},\hbox{ where }D:=\mathrm{Diag} \left(B_{1}z^{\mu_{1}},\dots,B_{r}z^{\mu_{r}}\right).$$
\end{theo}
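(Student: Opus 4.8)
The plan is to reduce the statement to the analogous Birkhoff--Guenther normal form theorem proved in \cite{RSZ}, and then to extract the stated uniqueness from the rigidity of the splitting. First I would recall that the matrix $C$ of (\ref{eq15}) is the companion-type matrix attached to the factored $q$-difference operator $P=(\dq-f_m)\dots(\dq-f_1)$, and that by the remark following (\ref{eq15}) its slopes are precisely the integers $-v_0(g_j)$, each appearing with multiplicity $m_j$. Since the $g_j$ are germs of meromorphic functions with integer valuations, the associated system $\sq Y=CY$ has integral slopes, so the structure theory of \cite{RSZ}, $\S 2.2$, applies. The existence part then says that, after a formal gauge transformation $\hat G$, the system is equivalent to a system whose matrix is block-diagonal, each block being a pure slope $z^{\mu_i}$ times a constant invertible matrix that one can put into Jordan form; collecting equal slopes gives the strictly increasing sequence $\mu_1<\dots<\mu_r$ and the Jordan blocks $B_j$.

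Concretely, I would carry out the argument in the following steps. First, invoke the Hukuhara--Turrittin / Birkhoff--Guenther classification for $q$-difference systems with integral slopes from \cite{RSZ} to obtain a formal gauge transformation bringing $C$ to block-diagonal pure-slope form $\mathrm{Diag}(A_1z^{\mu_1},\dots,A_rz^{\mu_r})$ with $A_i\in \mathrm{GL}_{m_i}(\C)$ and $\mu_1<\dots<\mu_r$ the distinct slopes. Second, within each diagonal block, conjugate $A_i$ to its Jordan normal form $B_i$ by a constant matrix, which is again a gauge transformation; composing gives the desired $\hat G$ and $D=\mathrm{Diag}(B_1z^{\mu_1},\dots,B_rz^{\mu_r})$. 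Third, for the normalization of $\hat G$, I would use the freedom of multiplying $\hat G$ on the right by a formal gauge transformation commuting with $D$ to arrange that the diagonal entries of $\hat G$ have $z$-valuation $0$ with constant term $1$; the commutant of $D$ is block-diagonal because the $\mu_i$ are distinct, so this normalization can be done block by block.

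For uniqueness, I would argue that if $C=\hat G[D]_{\sq}=\hat G'[D']_{\sq}$ with two data sets, then $\hat G'^{-1}\hat G$ is a formal gauge transformation between $D$ and $D'$. The slopes $\mu_i$ and multiplicities $m_i$ are invariants of the Newton polygon, hence determined by $C$; thus $r$, the $\mu_i$, and the sizes $m_i$ coincide. The transformation between $D$ and $D'$ must then respect the slope filtration, forcing it to be block-diagonal and constant in each block (a nonconstant or off-diagonal entry would change a slope or mix incompatible slopes), so $B_j$ and $B_j'$ are conjugate by a constant matrix; since both are in Jordan normal form, they are equal up to the usual ordering of Jordan blocks, which is fixed by convention. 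Finally the normalization on the diagonal of $\hat G$ pins down $\hat G$ itself. The main obstacle I anticipate is this last rigidity step: one must show carefully that the only formal gauge transformations preserving a pure-slope block-diagonal $D$ are the constant block-diagonal ones, which rests on comparing $z$-valuations in the equation $(\sq P)D=DP$ slope by slope and using $\mu_i\neq\mu_j$ for $i\neq j$; this is where all the work of \cite{RSZ} is really being packaged, and I would lean on their $\S 2.2$ for the precise valuation estimates rather than redo them.
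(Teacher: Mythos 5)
The paper offers no written proof of Theorem \ref{theo6}: it is stated as a direct consequence of \cite{RSZ}, $\S 2.2$, so your overall strategy of reducing to that reference matches the paper's intent, and the existence skeleton of your sketch is fine, as is your valuation recursion for killing off-diagonal blocks across \emph{distinct} slopes (coefficientwise, $q^{n}P_{j,k,n}B_{k}=B_{j}P_{j,k,n+\mu_{k}-\mu_{j}}$ propagates a nonzero coefficient to arbitrarily negative indices, impossible for a Laurent series). The genuine gap is in your rigidity step, and it is exactly the step you flagged as the anticipated obstacle: it is \emph{false} that a formal gauge transformation between two pure block-diagonal forms with the same slopes must be constant in each block. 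The obstruction is $q^{\Z}$-resonance of the spectra. Concretely, with $P=\mathrm{Diag}(1,z)$ one has $(\sq P)\,\mathrm{Id}_{2}\,P^{-1}=\mathrm{Diag}(1,q)$, i.e. $\mathrm{Diag}(1,q)=P\left[\mathrm{Id}_{2}\right]_{\sq}$: the two matrices $\mathrm{Diag}(1,q)$ and $\mathrm{Id}_{2}$, both in Jordan form and both of slope $0$, are formally gauge equivalent although not conjugate. More generally, whenever $\mathrm{Sp}(B_{j})\cap q^{n}\mathrm{Sp}(B_{j})\neq\varnothing$ for some $n\neq 0$, the self-gauges of $D$ are nonconstant. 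So your conclusion that $B_{j}$ and $B_{j}'$ are conjugate by a constant matrix does not follow, and ``equal up to the usual ordering of Jordan blocks, which is fixed by convention'' is unsupported (the theorem fixes no such convention). What actually rules out $q^{\Z}$-shifted eigenvalue representatives and reorderings is the normalization $v_{0}(\hat{g}_{j,j})=0$, $t_{0}(\hat{g}_{j,j})=1$ on the diagonal of $\hat{G}$ — in the example above, every gauge from $\mathrm{Id}_{2}$, or from the reordered form, to the relevant companion matrix has a diagonal entry of valuation $\pm 1$ — but you invoke this normalization only at the very end, to pin $\hat{G}$ once $D$ is known; it must already enter in the determination of the $B_{j}$ themselves.

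Your normalization step in the existence part suffers from the same root problem: right multiplication of $\hat{G}$ by elements of the commutant of $D$ leaves $D$ fixed, and the commutant (constant block-diagonal in the generic case) cannot repair wrong $z$-valuations on the diagonal of $\hat{G}$; the transformations that can, such as $\mathrm{Diag}(1,z)$ above, do \emph{not} commute with $D$ — they replace $B_{j}$ by a matrix whose eigenvalues are shifted within their $q^{\Z}$-orbits, so the normalization must be achieved by simultaneously adjusting the representatives $B_{j}$ and re-Jordanizing, bookkeeping your sketch omits. Be warned also that the resonant case is delicate even for the final ``pins down $\hat{G}$'' claim: for $C=\begin{pmatrix}1&q-1\\0&q\end{pmatrix}$ (the matrix (\ref{eq15}) with $f_{1}=0$, $f_{2}=1$) and $D=\mathrm{Diag}(1,q)$, both $\begin{pmatrix}1&1\\0&1\end{pmatrix}$ and $\begin{pmatrix}1+z&1\\z&1\end{pmatrix}$ are invertible gauges with normalized diagonals, because the nonconstant self-gauges of $D$ interact with the normalization. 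The robust route, and the one consistent with how the paper actually uses the result, is the direct recursive construction carried out in Lemma \ref{lem1} under the non-resonance condition (\ref{eq20}), where triangularity plus coefficientwise solvability ($q^{n}\neq 1$ for $n\neq 0$, spectra non-$q^{\Z}$-resonant) gives both existence and uniqueness; for the general statement one must import the precise normalization conventions of \cite{RSZ} rather than the rigidity lemma you propose.
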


\pagebreak[3]
\begin{rem}
The opposite of the $\mu_{j}$ are the slopes of the Newton polygon of (\ref{eq15}) and the $m_{j}$ are the corresponding multiplicities. See \cite{RSZ}, $\S 2.2$, for a precise definition. In particular, the above theorem gives the local formal classification of linear $q$-difference systems with integral slopes. See \cite{vdPR} for the local formal classification of linear $q$-difference systems in the general case.
\end{rem}

\pagebreak[3]
\subsection{Local analytic study of linear $q$-difference equations}\label{sec22}
\pagebreak[3]
\begin{defi}
We say that $T\in \mathrm{GL}_{m}(\C(z))$ is a Birkhoff-Guenther matrix, if there exist $\mu_{1}<\dots<\mu_{r}$ elements of $\Z$, $m_{1},\dots,m_{r}$ positive integers which sum is $m$, $B_{j}\in \mathrm{GL}_{m_{j}}(\C)$, $U_{j,k}$, $m_{j}$ times $m_{k}$ matrices with coefficients in $\displaystyle \sum_{\nu=\mu_{j}}^{\mu_{k}-1}\C z^{\nu}$, such that 
$$T=\begin{pmatrix}
z^{\mu_{1}}B_{1}&\dots&\dots&\dots&\dots\\
&\ddots&\dots&U_{j,k}&\dots \\
&&\ddots&\dots&\dots \\
&&&\ddots&\dots \\
0&&&&z^{\mu_{r}}B_{r}
\end{pmatrix}. $$
\end{defi}
 The next theorem says that after an analytic gauge transformation, we may put (\ref{eq15}) in the Birkhoff-Guenther normal form. Note that the result is true for any linear $q$-difference system in coefficients in $\C(\{z\})$ having integral slopes. This result is used in \cite{RSZ} to make the local analytic classification of such $q$-difference systems. From \cite{RSZ}, $\S 3.3.2$, we may deduce:

\pagebreak[3]
\begin{theo}\label{theo1}
Let $C$ be defined in (\ref{eq15}). Let~$\mu_{1}<\dots<\mu_{r}$, ~$m_{1}\dots,m_{r}$ and $B_{1},\dots,B_{r}$ be defined as in Theorem \ref{theo6}. We have the existence of a unique pair $(F,T)$, where  $F\in\mathrm{GL}_{m}\Big(\C(\{z\})\Big)$,  $T$ is a Birkhoff-Guenther matrix of the form
$$T:=\begin{pmatrix}
z^{\mu_{1}}B_{1}&&U_{j,k}\\
&\ddots& \\
0&&z^{\mu_{r}}B_{r}
\end{pmatrix}, $$
such that $$C=F[T]_{\displaystyle\sq}.$$
\end{theo}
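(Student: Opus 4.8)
The plan is to derive existence from the Birkhoff--Guenther normalization theorem of \cite{RSZ}, $\S 3.3.2$, applied to the system (\ref{eq15}), and then to pin down the diagonal blocks of $T$ by comparison with the formal data of Theorem \ref{theo6}. First I would note that, by the Remark following Theorem \ref{theo6}, the system $\sq Y=CY$ has integral slopes $-\mu_{1}>\dots>-\mu_{r}$, so the hypotheses of the analytic normalization of \cite{RSZ} are met. That result provides an analytic gauge transformation $F\in \mathrm{GL}_{m}\big(\C(\{z\})\big)$ and a Birkhoff--Guenther matrix $T$, a priori with diagonal blocks $z^{\nu_{j}}\widetilde{B}_{j}$ and off-diagonal blocks with entries in $\sum_{\nu=\nu_{j}}^{\nu_{k}-1}\C z^{\nu}$, such that $C=F[T]_{\sq}$.

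Next I would identify these diagonal invariants with those of Theorem \ref{theo6}. Since $C=F[T]_{\sq}$ with $F$ analytic, $T$ is in particular formally equivalent to $C$, and $C$ is formally equivalent to $D=\mathrm{Diag}(B_{1}z^{\mu_{1}},\dots,B_{r}z^{\mu_{r}})$ by Theorem \ref{theo6}. Because the slopes of $T$ are pairwise distinct, a formal gauge transformation annihilating the off-diagonal blocks of $T$ is available (the relevant $q$-difference equation is solvable precisely because distinct slopes do not resonate), so $T$ is formally equivalent to $\mathrm{Diag}(z^{\nu_{j}}\widetilde{B}_{j})$. The uniqueness in the formal classification of Theorem \ref{theo6} then forces $r$, the integers $\nu_{j}=\mu_{j}$, the sizes $m_{j}$, and the Jordan blocks $\widetilde{B}_{j}=B_{j}$ to coincide with the data of Theorem \ref{theo6}, giving $T$ the announced shape.

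It remains to establish uniqueness of the pair. If $(F_{1},T_{1})$ and $(F_{2},T_{2})$ both satisfy the conclusion, then $G:=F_{2}^{-1}F_{1}\in \mathrm{GL}_{m}\big(\C(\{z\})\big)$ solves $\sq(G)\,T_{1}=T_{2}\,G$, and by the previous step $T_{1}$ and $T_{2}$ share the diagonal $z^{\mu_{j}}B_{j}$. Reading this equation block by block, the diagonal relations $\sq(G_{jj})\,z^{\mu_{j}}B_{j}=z^{\mu_{j}}B_{j}\,G_{jj}$ force each $G_{jj}$ to be a constant matrix commuting with $B_{j}$ by a $z$-valuation argument using $B_{j}\in \mathrm{GL}_{m_{j}}(\C)$, and the off-diagonal relations, together with the tight degree window $\mu_{j}\leq \nu\leq \mu_{k}-1$ carried by the blocks $U_{j,k}$, determine the remaining blocks of $G$ uniquely. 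The normalization built into the construction of \cite{RSZ} removes the residual constant freedom, yielding $T_{1}=T_{2}$ and $F_{1}=F_{2}$.

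I expect the main obstacle to be the analytic, i.e. convergent, existence asserted in the first step together with the rigidity underlying uniqueness: both ultimately rest on the study of the $q$-difference cohomological equation $\sq(X)\,z^{\mu_{k}}B_{k}-z^{\mu_{j}}B_{j}\,X=V$ and on the fact that $\sum_{\nu=\mu_{j}}^{\mu_{k}-1}\C z^{\nu}$ is an exact complement to its image. This is precisely the analysis carried out in \cite{RSZ}, $\S 3.3.2$, so the deduction here amounts to invoking that normalization, matching the diagonal invariants through the formal uniqueness of Theorem \ref{theo6}, and running the block-by-block rigidity computation above.
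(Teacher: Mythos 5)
Your proposal takes essentially the same route as the paper, which offers no independent argument for Theorem \ref{theo1} and simply deduces it from the Birkhoff--Guenther normalization of \cite{RSZ}, $\S 3.3.2$ (the theorem is introduced by ``From \cite{RSZ}, $\S 3.3.2$, we may deduce''). The supplementary details you provide --- identifying the diagonal data via the formal uniqueness of Theorem \ref{theo6}, and the rigidity of the cohomological equation $\sq(X)\,z^{\mu_{k}}B_{k}-z^{\mu_{j}}B_{j}X=V$ relative to the window $\sum_{\nu=\mu_{j}}^{\mu_{k}-1}\C z^{\nu}$, including the residual constant freedom that \cite{RSZ}'s normalization removes --- are exactly the arguments carried out in \cite{RSZ}, so your write-up is consistent with, and somewhat more explicit than, the paper's purely citational treatment.
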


Let us introduce some notations. Let $\mathcal{M}(\C^{*})$ (resp. $\mathcal{M}(\C^{*},0)$) be the field of meromorphic functions on $\C^{*}$ (resp. meromorphic functions on some punctured neighbourhood of~$0$ in~$\C^{*}$). Let $K_{0}$ be a sub-field of $\mathcal{M}(\C^{*},0)$ stable by $\sq$. Let~$A,B\in \mathrm{GL}_{m}\left(K_{0}\right)$. The two~$q$-difference systems,~$\sq Y=AY$ and~$\sq Y=BY$ are equivalent over~$K_{0}$, if there exists~$P\in \mathrm{GL}_{m}(K_{0})$, called gauge transformation, such that
$$A=P[B]_{\displaystyle\sq}:=(\sq P)BP^{-1}.$$ 
Let us define $\Sigma \subset \C^{*}$, finite modulo $q^{\Z}$, as follows (see Theorem \ref{theo6} for the definition of the matrices $B_{1},\dots,B_{r}$ and the integers $\mu_{1},\dots,\mu_{r}$):
$$\Sigma:=\displaystyle \bigcup_{1\leq j<k\leq r} S_{j,k}, \hbox{ where } S_{j,k}:=\left\{-a\in \C^{*}\Big|q^{\Z}a^{-\mu_{j}}\mathrm{Sp}(B_{j})\cap  q^{\Z}a^{-\mu_{k}}\mathrm{Sp}(B_{k})\neq \varnothing \right\},$$
and $\mathrm{Sp}$ denotes the spectrum.

\pagebreak[3]
\begin{theo}[\cite{RSZ}, Theorem 6.1.2]
Let $D,T$ be the matrices defined in Theorems~\ref{theo6} and \ref{theo1}.  For all ${\l\in \C^{*}\setminus \Sigma}$, there exists an unique matrix $$\hat{H}^{\left[\l\right]}:=\begin{pmatrix}
\mathrm {Id}_{m_{1}}&&\hat{H}^{\left[\l\right]}_{j,k}\\
&\ddots&\\
0&&\mathrm {Id}_{m_{r}}
\end{pmatrix}\in \mathrm{GL}_{m}\Big(\mathcal{M}(\C^{*})\Big),$$ solution of 
$T=\hat{H}^{\left[\l\right]}[D]_{\displaystyle\sq}$, such that for all $1\leq j<k\leq r$, the germs of the entries of $\hat{H}^{\left[\l\right]}_{j,k}$ at $0$ have all their poles on $-\l q^{\Z}$ with multiplicities at most  $\mu_{k}-\mu_{j}$. 
\end{theo}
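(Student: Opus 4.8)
The plan is to turn the identity $T=\hat{H}^{[\l]}[D]_{\sq}$ into a triangular cascade of scalar $q$-difference equations and to sum each of them by a theta-function substitution that pins the poles on the prescribed spiral. Write $H:=\hat{H}^{[\l]}$; since $D$ is invertible the equation is equivalent to $TH=(\sq H)D$. Both $T$ and $D$ are block upper triangular with the \emph{same} diagonal blocks $z^{\mu_{j}}B_{j}$, and $H$ is block-unipotent upper triangular, so the diagonal block equations hold automatically and, for $j<k$, the $(j,k)$-block reads
$$(\sq H_{j,k})\,z^{\mu_{k}}B_{k}-z^{\mu_{j}}B_{j}H_{j,k}=U_{j,k}+\sum_{l=j+1}^{k-1}U_{j,l}H_{l,k}=:V_{j,k}.$$
I would solve these by induction on $d:=k-j$. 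For $d=1$ the sum is empty and $V_{j,k}=U_{j,k}$ has Laurent-polynomial entries lying in $\sum_{\nu=\mu_{j}}^{\mu_{k}-1}\C z^{\nu}$; for $d\geq 2$ the blocks $H_{l,k}$ with $j<l<k$ have already been built (they have $k-l<d$), so $V_{j,k}$ is a known matrix of meromorphic functions whose only poles lie on $-\l q^{\Z}$, with order at most $\mu_{k}-\mu_{j+1}<\mu_{k}-\mu_{j}$.

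So the heart of the matter is the Sylvester-type equation $\mathcal{L}(X):=(\sq X)z^{\mu_{k}}B_{k}-z^{\mu_{j}}B_{j}X=V$ for an unknown $m_{j}\times m_{k}$ matrix $X$. Using that $B_{j},B_{k}$ are in Jordan form, I would first treat the scalar case $B_{j}=(b_{j})$, $B_{k}=(b_{k})$ with $b_{j},b_{k}\in\C^{*}$, and then recover the general case by peeling off the nilpotent parts through a finite recursion along the Jordan filtration, each step having the same shape with a right-hand side corrected by lower, already-solved components. Setting $s:=\mu_{k}-\mu_{j}\geq 1$, the scalar equation is $b_{k}z^{\mu_{k}}x(qz)-b_{j}z^{\mu_{j}}x(z)=w(z)$. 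The key device is the substitution $x=\phi\,u$, where $\phi$ is a theta-quotient with a pole of order $s$ along $-\l q^{\Z}$ (and no other poles or zeros) and multiplier $\phi(qz)/\phi(z)=c\,z^{-s}$; because $z^{\mu_{k}}z^{-s}=z^{\mu_{j}}$ this collapses the equation to the \emph{constant-coefficient} first-order equation
$$b_{k}c\,u(qz)-b_{j}u(z)=w(z)\big/\big(z^{\mu_{j}}\phi(z)\big),$$
i.e. $\sq u=\alpha u+g$ with $\alpha=b_{j}/(b_{k}c)$ and $g$ holomorphic on $\C^{*}$ (the zeros of $\phi^{-1}$, of order $s$ along $-\l q^{\Z}$, absorb the poles of $w$, which have order $<s$). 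With a suitable normalization of $\phi$ one has $\alpha\in q^{\Z}\Leftrightarrow -\l\in S_{j,k}$, so the non-resonance hypothesis $\l\in\C^{*}\setminus\Sigma$ is exactly what is needed below.

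On Laurent coefficients the constant-coefficient equation reads $(q^{n}-\alpha)u_{n}=g_{n}$; since $\alpha\notin q^{\Z}$ each $u_{n}=g_{n}/(q^{n}-\alpha)$ is well defined, the denominators are bounded away from $0$ and grow with $|n|$, so $u=\sum u_{n}z^{n}$ defines a function holomorphic on $\C^{*}$ and $x=\phi u$ is meromorphic on $\C^{*}$ with poles confined to $-\l q^{\Z}$ of order at most $s$. This gives existence and, propagating through the Jordan recursion and the induction on $d$, produces $H_{j,k}$ with the asserted pole locations and multiplicities $\leq\mu_{k}-\mu_{j}$; assembling the blocks yields $\hat{H}^{[\l]}\in\mathrm{GL}_{m}(\mathcal{M}(\C^{*}))$ (unipotent, hence invertible). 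Uniqueness follows from the same computation: a homogeneous solution $X$ with poles only on $-\l q^{\Z}$ of order $\leq s$ forces $u=X/\phi$ to be holomorphic on $\C^{*}$ and to satisfy $\sq u=\alpha u$, whence $(q^{n}-\alpha)u_{n}=0$ and $u\equiv 0$.

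The main obstacle I anticipate is not the scalar step but the bookkeeping that makes the two quantitative claims survive the reductions: showing that peeling off the nilpotent parts of $B_{j},B_{k}$ never raises the pole order beyond $s=\mu_{k}-\mu_{j}$, and that the right-hand sides $V_{j,k}$ produced in the induction indeed have pole order $<s$ on the single spiral $-\l q^{\Z}$, so that the factor $\phi^{-1}$ (vanishing to order exactly $s$ there) clears them. This is precisely where the definition of $\Sigma$ through the $S_{j,k}$ — encoding the forbidden multiplicative relations, namely $q^{\Z}\l^{-\mu_{j}}\mathrm{Sp}(B_{j})\cap q^{\Z}\l^{-\mu_{k}}\mathrm{Sp}(B_{k})=\varnothing$ among \emph{all} eigenvalue pairs — is used, and it is the step I would write out with the most care.
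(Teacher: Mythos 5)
First, a point of reference: the paper does not prove this statement at all --- it is imported verbatim from \cite{RSZ}, Theorem 6.1.2, so there is no in-paper proof to compare against. Your reconstruction is, in substance, the standard argument behind that theorem: peel off the block-unipotent structure of $\hat{H}^{[\l]}$ to get the cascade $(\sq H_{j,k})z^{\mu_k}B_k - z^{\mu_j}B_jH_{j,k}=V_{j,k}$, induct on $k-j$ so that $V_{j,k}$ has poles only on $-\l q^{\Z}$ of order $<\mu_k-\mu_j$, and solve each $\sq$-Sylvester equation by dividing out a theta-quotient of multiplier $c\,z^{-(\mu_k-\mu_j)}$ to reach a constant-coefficient equation solved coefficientwise, with non-resonance giving both existence and uniqueness. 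All the structural steps check out: the diagonal blocks are indeed automatic, the $1/\phi$ factor (vanishing to order exactly $s=\mu_k-\mu_j$ on the spiral) absorbs right-hand sides of pole order $\le s$, which is what makes both the Jordan-nilpotent recursion and the induction on $k-j$ preserve the bound $\le\mu_k-\mu_j$, and the coefficient estimates $|u_n|\le C|g_n|$ with $\inf_n|q^n-\a|>0$ do yield holomorphy of $u$ on $\C^{*}$.

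The one place where you are not merely careful but actually slip is the constants bookkeeping you yourself flag. With the paper's normalization ($\sq\T_q=z\T_q$, zeros of $\T_q$ on $-q^{\Z}$), the natural choice $\phi=\T_q(z/\l)^{-s}$ has poles exactly on $-\l q^{\Z}$ and multiplier constant $c=\l^{s}$ (and $c$ is determined modulo $q^{\Z}$ by the divisor, so you cannot renormalize it away); the obstruction is then $b_j/b_k\in q^{\Z}c$. Matching this against the paper's $\Sigma$ requires reading $S_{j,k}=\{-a:\dots\}$ with $a=-\l$, i.e.\ the hypothesis $\l\notin S_{j,k}$ says $q^{\Z}(-\l)^{-\mu_j}\mathrm{Sp}(B_j)\cap q^{\Z}(-\l)^{-\mu_k}\mathrm{Sp}(B_k)=\varnothing$; your closing display substitutes $a=\l$ (equivalently, you test $-\l\in S_{j,k}$ rather than $\l\in S_{j,k}$), which is not the stated hypothesis. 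This is a sign/convention error inside exactly the verification you deferred, not a flaw in the method: once the theta-quotient and the spiral convention are pinned down consistently, the obstruction set coincides with $\Sigma$ and the argument closes. So: correct approach, essentially the \cite{RSZ} proof, with the constant-matching step still owed.
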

In $\S 3.3.3$ of \cite{RSZ}, it is shown the existence and the uniqueness of $$\hat{H}:=\begin{pmatrix}
\mathrm {Id}_{m_{1}}&&\hat{H}_{j,k}\\
&\ddots& \\
0&&\mathrm {Id}_{m_{r}}
\end{pmatrix}\in \mathrm{GL}_{m}\Big(\C[[z]]\Big),$$ formal gauge transformation, that satisfies
$$T=\hat{H}[D]_{\displaystyle\sq}.$$ 
The matrix $\hat{H}^{\left[\l\right]}$ is $q$-Gevrey asymptotic to  $\hat{H}:=\sum_{\ell\in\N} \hat{H}_{\ell}z^{\ell}$ along the divisor $-\l q^{\Z}$. This means that for all $W\subset \C^{*}$, open set that satisfies $\inf_{z\in W,\zeta\in -\l q^{\Z}}\left|1-\frac{z}{\zeta} \right|>0$,  there exists~$M >0$, such that for all~$N \in \N^{*}$ and all ${z\in W}$, ($\left\| .\right\|_{\infty}$ denotes the infinite norm)
$$\left\| \hat{H}^{\left[\l\right]}(z)-\sum_{\ell= 0}^{N-1} \hat{H}_{\ell}z^{\ell}\right\|_{\infty}\leq M^{N}q^{N^{2}/2\mu_{1}} |z|^{N}.$$   
Note that if $z^{N}F$ (see Theorem~\ref{theo6} for the definition of $F$) with $N\in \N$ has entries in~$\C[[z]]$, then $z^{N}F\hat{H}^{\left[\l\right]}$ is $q$-Gevrey asymptotic to $z^{N}F\hat{H}$ along the divisor $-\l q^{\Z}$.\\ \par 
We still consider (\ref{eq15}).
Let $\Lambda\in \mathrm{GL}_{m}\left(\mathcal{M}(\C^{*})\right)$ (see below for the existence of $\L$) be any solution of 
\begin{equation}\label{eq2}
\sq \Lambda=\mathrm{Diag}\left(B_{1}z^{\mu_{1}},\dots,B_{r}z^{\mu_{r}}\right)\Lambda=
\Lambda \mathrm{Diag}\left(B_{1}z^{\mu_{1}},\dots,B_{r}z^{\mu_{r}}\right).
\end{equation}
From what precede, we obtain that for every $\l\in \C^{*}\setminus \Sigma$, we get a meromorphic fundamental solution for (\ref{eq15})
$$U_{\Lambda}^{[\l]}:=F\hat{H}^{\left[ \l\right]}\Lambda\in \mathrm{GL}_{m}(\mathcal{M}(\C^{*},0)).$$
 We finish the subsection by giving an explicit meromorphic solution of (\ref{eq2}), but before, we need to introduce some notations.
 For~$a\in \C^{*}$, let us consider, $\T_{q}(z):=\displaystyle \sum_{\ell \in \Z} q^{\frac{-\ell(\ell+1)}{2}}z^{\ell}$,~$l_{q}(z):=\dfrac{\d\left(\T_{q}(z)\right)} {\T_{q}(z)}$, ~${\L_{q,a}(z):=\frac{\T_{q}(z)}{\T_{q}(z/a)}}$.
They satisfy the~$q$-difference equations: \\
\begin{itemize}
\item $\sq \T_{q}=z\T_{q}$.\\
\item~$\sq l_{q}=l_{q} +1$.\\
\item~$\sq \L_{q,a}=a\L_{q,a}$.\\
\end{itemize}
Let~$A$ be an invertible matrix with complex coefficients and consider now the decomposition in Jordan normal form~$A=P(D'U)P^{-1}$, where~$D'=\mathrm{Diag}(d_{i})$ is diagonal,~$U$ is a unipotent upper triangular matrix with~$D'U=UD'$, and~$P$ is an invertible matrix with complex coefficients. Following \cite{S00}, we construct the matrix:
$$
\L_{q,A}:=P\left(\mathrm{Diag}\left(\L_{q,d_{i}}\right)e^{\log(U)l_{q}}\right)P^{-1}\in \mathrm{GL}_{m}\Big(\C\left( l_{q},\left(\L_{q,a}\right)_{a \in \C^{*}}\right)\Big)$$
that satisfies:
$$
\sq \L_{q,A}=A\L_{q,A}=\L_{q,A}A.$$
Let~$a\in \C^{*}$ and consider the corresponding matrix~$(a)\in \mathrm{GL}_{1}(\C)$. By construction, we have~$\L_{q,a}=\L_{q,(a)}$.\\ \par

Then, the following matrix is solution of (\ref{eq2})
$$\Lambda_{0}:=\mathrm{Diag}\left(\L_{q,B_{1}}\T_{q}\left(z\right)^{\mu_{1}}\times \mathrm{Id}_{m_{1}},\dots,\L_{q,B_{r}}\T_{q}\left(z\right)^{\mu_{r}}\times \mathrm{Id}_{m_{r}} \right)\in \mathrm{GL}_{m}\left(\mathcal{M}(\C^{*})\right).$$

Note that the family of solutions $\left(U_{\Lambda_{0}}^{[\l]}\right)_{\l\in \C^{*}\setminus \Sigma}$ are involved to build topological generator for the local Galois group of (\ref{eq15}). See \cite{RS07,RS09}. \\ \par 

\pagebreak[3]
\section{Integral representation of the meromorphic solutions}\label{sec3}
Sauloy's algebraic summation, see \cite{S04b}, gives an explicit way to compute the meromorphic solutions of  $\S\ref{sec22}$. Unfortunately, the behaviour of the expression of the solutions when $q$ goes to $1$ seems complicated to understand. Under convenient assumptions, we are going to give in this section, another expression of the meromorphic solutions. \\ \par

For $g=\sum_{\ell=n}^{\infty} g_{\ell}z^{\ell}\in \C((z))\setminus \{0\}$ with $g_{n}\neq 0$, let  $t_{0}(g):=g_{n}$. Throughout the paper, we will make the convention that $t_{0}(0):=0$. We remind that $v_{0}$ denotes the $z$-valuation. Let us consider ${f_{1},\dots,f_{m}\in  \C\left(\left\{z\right\}\right)}$ of (\ref{eq15}) and assume that (non resonance condition)
\begin{equation}\label{eq20}
v_{0}\left(1+(q-1)f_{j}\right)=v_{0}\left(1+(q-1)f_{k}\right)\Longrightarrow t_{0}(1+(q-1)f_{j})\notin q^{\Z}t_{0}\left(1+(q-1)f_{k}\right).
\end{equation}
Note that if the Newton polygon of (\ref{eq15}) has only positive slopes with multiplicity one, then the non resonance condition is satisfied.
The next lemma gives the expression of the bloc diagonal matrix appearing in Theorem~\ref{theo6}. 

\pagebreak[3]
\begin{lem}\label{lem1}
There exists a unique ${(\hat{g}_{j,k}):=\hat{G}\in \mathrm{GL}_{m}\Big(\C((z))\Big)}$, upper triangular with for all ${1\leq j\leq m}$, $v_{0}(\hat{g}_{j,j})=0$, $t_{0}\left(\hat{g}_{j,j}\right)=1$, such that 
$$C=\hat{G}\left[\mathrm{Diag}\left(z^{v_{0}(1+(q-1)f_{1})}t_{0}\left(1+(q-1)f_{1}\right),\dots,z^{v_{0}(1+(q-1)f_{m})}t_{0}\left(1+(q-1)f_{m}\right)\right)\right]_{\displaystyle\sq}.$$
\end{lem}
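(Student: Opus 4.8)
The plan is to solve directly the gauge equation $C=\hat G[\,\mathrm{Diag}(d_1,\dots,d_m)\,]_{\sq}$, which after writing out $[\cdot]_{\sq}=(\sq\,\cdot)\,(\cdot)\,(\cdot)^{-1}$ is equivalent to
$$C\hat G=(\sq \hat G)\,\mathrm{Diag}(d_1,\dots,d_m),$$
where I set $g_j:=1+(q-1)f_j$, $\mu_j:=v_0(g_j)$, $c_j:=t_0(g_j)$ and $d_j:=z^{\mu_j}c_j$; recall that the ordering of the factorization gives $\mu_1\le\dots\le\mu_m$. Since $C$ is the upper bidiagonal matrix with diagonal $(g_j)$ and superdiagonal entries $q-1$, and since $\hat G=(\hat g_{jk})$ is upper triangular, the $(j,k)$-entry of the matrix equation reads
$$g_j\,\hat g_{jk}+(q-1)\,\hat g_{j+1,k}=c_k z^{\mu_k}\,\sq \hat g_{jk},$$
with the convention $\hat g_{m+1,k}=0$. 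This is a triangular system that I would solve column by column, and within each column by descending induction on the row index $j$ starting from the diagonal.

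For the diagonal entries $j=k$ the coupling term disappears, and writing $g_j=c_j z^{\mu_j}v_j$ with $v_j\in\C\{z\}$, $v_j(0)=1$, the equation collapses to $\sq\hat g_{jj}=v_j\,\hat g_{jj}$. Seeking $\hat g_{jj}=1+\sum_{\ell\ge 1}b_\ell z^\ell$ and setting $v_j=1+\sum_{\ell\ge1}w_\ell z^\ell$, comparison of the coefficient of $z^\ell$ gives $(q^\ell-1)b_\ell=w_\ell+\sum_{i=1}^{\ell-1}w_i b_{\ell-i}$. Because $q>1$ forces $q^\ell-1\neq 0$ for all $\ell\ge 1$, this recursion determines $\hat g_{jj}$ uniquely with the prescribed normalization $v_0(\hat g_{jj})=0$, $t_0(\hat g_{jj})=1$.

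For the off-diagonal entries $j<k$ the quantity $\hat g_{j+1,k}$ is already known, and I must invert the operator $L_{jk}:=g_j-c_k z^{\mu_k}\sq$ on $\C((z))$ in order to solve $L_{jk}\hat g_{jk}=-(q-1)\hat g_{j+1,k}$. If $\mu_j<\mu_k$, the multiplicative term $g_j=c_jz^{\mu_j}v_j$ dominates: factoring $L_{jk}=g_j\bigl(I-g_j^{-1}c_kz^{\mu_k}\sq\bigr)$, the operator $g_j^{-1}c_kz^{\mu_k}\sq$ raises the $z$-valuation by $\mu_k-\mu_j>0$, so the second factor is invertible by a Neumann series in the $z$-adic topology and $L_{jk}$ is bijective. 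If $\mu_j=\mu_k$, factoring out $z^{\mu_j}$ reduces the problem to inverting $c_jv_j-c_k\sq$; on the monomial $z^\ell$ this operator acts with leading coefficient $c_j-c_kq^\ell$, and the non-resonance hypothesis (\ref{eq20}), which when $v_0(g_j)=v_0(g_k)$ states exactly that $c_j\notin q^{\Z}c_k$, ensures $c_j-c_kq^\ell\neq 0$ for every $\ell\in\Z$. Splitting $c_jv_j-c_k\sq=D_0+R$, where $D_0\colon z^\ell\mapsto(c_j-c_kq^\ell)z^\ell$ is invertible and valuation-preserving and $R$ is multiplication by $c_j(v_j-1)$ of positive valuation, the factorization $D_0(I+D_0^{-1}R)$ is again invertible by a Neumann series. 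In both cases $\hat g_{jk}$ exists and is uniquely determined, so the induction yields a unique upper triangular $\hat G$ with the required diagonal; as each $\hat g_{jj}$ is a unit of $\C((z))$, one gets $\hat G\in\mathrm{GL}_m(\C((z)))$.

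The main obstacle is precisely the off-diagonal inversion in the equal-slope case $\mu_j=\mu_k$: this is where the non-resonance condition (\ref{eq20}) is indispensable, since if some $c_j-c_kq^\ell$ vanished the operator $L_{jk}$ would lose invertibility and neither existence nor uniqueness of $\hat g_{jk}$ could be guaranteed.
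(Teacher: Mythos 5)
Your proof is correct and takes essentially the same approach as the paper: both reduce the gauge relation $C\hat G=(\sq\hat G)D$ entrywise to the scalar $q$-difference equations (\ref{eq12}), solved column by column --- the diagonal entries by the coefficient recursion (using $q^{\ell}\neq 1$ for $\ell\geq 1$) and the off-diagonal entries by inverting $g_{j}-c_{k}z^{\mu_{k}}\sq$, with the non-resonance condition (\ref{eq20}) intervening exactly in the equal-valuation case. The paper merely asserts solvability of these scalar equations from the standing assumptions, whereas you fill in the Neumann-series details, but the structure of the argument is identical.
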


\begin{proof}
Due to the assumptions we have made on $f_{1},\dots,f_{m}$, For all $1\leq j\leq m$ (resp. ${1\leq j<k\leq m}$) there exists $\hat{g}_{j,j}\in \C[[z]]$ with constant term equal to $1$ (resp.~${\hat{g}_{j,k}\in \C((z))}$), solution of 
$$z^{v_{0}(1+(q-1)f_{j})}t_{0}\left(1+(q-1)f_{j}\right)\sq \hat{g}_{j,j}=(1+(q-1)f_{j})\hat{g}_{j,j}$$
resp.
\begin{equation}\label{eq12}
z^{v_{0}(1+(q-1)f_{k})}t_{0}\left(1+(q-1)f_{k}\right)\sq \hat{g}_{j,k}=(1+(q-1)f_{j})\hat{g}_{j,k}+(q-1)\hat{g}_{j+1,k}.
\end{equation}
For $1\leq k<j\leq m$, put $\hat{g}_{j,k}:=0$. Then $(\hat{g}_{j,k}):=\hat{G}$ is the unique matrix that satisfies the required properties.
\end{proof}

\pagebreak[3]
\begin{rem}\label{rem3}
Note that the matrices $B_{j}$ of Theorem \ref{theo6} are all diagonal.
\end{rem}
\pagebreak[3]
\begin{rem}\label{rem5}
Using (\ref{eq12}), we obtain that if for all $1\leq j\leq m$, $v_{0}(1+(q-1)f_{j})\leq 0$, then $z\mapsto\hat{G}(z,q)\in \mathrm{GL}_{m}\Big(\C[[z]]\Big)$. More precisely, let $1\leq j<k\leq m$. We have 
$$v_{0}\left(\hat{g}_{j,k}\right)=-\displaystyle \sum_{\ell=j}^{k-1}v_{0}(1+(q-1)f_{\ell}).$$
\end{rem}
Let us define $\Sigma\subset \C^{*}$ as in $\S\ref{sec22}$ and let $\L\in \mathrm{GL}_{m}\left(\mathcal{M}(\C^{*})\right)$ be any diagonal matrix solution of 
$$
\begin{array}{lll}
\sq \Lambda&=&\mathrm{Diag}\left(
z^{v_{0}(1+(q-1)f_{1})}t_{0}\left(1+(q-1)f_{1}\right),\dots,z^{v_{0}(1+(q-1)f_{m})}t_{0}\left(1+(q-1)f_{m}\right)\right)\Lambda\\
&=&\Lambda\mathrm{Diag}\left(
z^{v_{0}(1+(q-1)f_{1})}t_{0}\left(1+(q-1)f_{1}\right),\dots,z^{v_{0}(1+(q-1)f_{m})}t_{0}\left(1+(q-1)f_{m}\right)\right).\end{array}$$ 

Using Lemma \ref{lem1}, we find that for every $\l\in \C^{*}\setminus \Sigma$, the fundamental solution for (\ref{eq15}) defined in $\S \ref{sec22}$ is of the form $$\left(u_{\Lambda,j,k}^{[\l]}\right):=U_{\Lambda}^{[\l]}=F\hat{H}^{\left[ \l\right]}\Lambda\in \mathrm{GL}_{m}(\mathcal{M}(\C^{*},0)). $$

 We will need a $q$-discrete analogue of the integration. For $N\in  \N^{*}\cup \{+\infty\}$, $z\in \C^{*}$, let us set the Jackson integral
																
$$\displaystyle \int_{q^{-N}z}^{z} f(t)d_{q}t:=(q-1)\displaystyle \sum_{\ell=-N}^{-1}f\left(q^{\ell}z\right)q^{\ell}z,$$
whenever the right hand side converges.
 Roughly speaking, Jackson integral degenerates into classical integral when $q$ goes to $1$, which means that for a convenient choice of function~$f$, we have on a convenient domain $$\displaystyle \int_{0}^{z} f(t)d_{q}t \underset{q \to 1}{\longrightarrow} \displaystyle \int_{0}^{z} f(t)dt.$$ 
Note also that for a convenient $f$, we have $$\dq \left(\int_{0}^{z} f(t)\frac{d_{q}t}{t}\right)=f(z).$$

\pagebreak[3]
\begin{ex}
Let $m=2$, $f_{1}:=\frac{-q^{-1}z^{-1}}{1+(q-1)q^{-1}z^{-1}}$ and $f_{2}:=0$. Then, for some diagonal matrix $\Lambda$, for some convenient $\l\in \C^{*}$, $z\in \C^{*}$, we have
$$U_{\Lambda}^{[\l]}(z):=\begin{pmatrix}
e_{q}(z^{-1})&u_{\Lambda,1,2}^{[\l]}(z)\\
0&1
\end{pmatrix}. $$
We want to give an integral formula for the entry $u_{\Lambda,1,2}^{[\l]}(z)$. Let us fix $\l\in \C^{*}$, $z\in \C^{*}$ such that for all $N\in \N$, $u_{\Lambda,1,2}^{[\l]}$ is analytic at $q^{-N}z$. Iterating the linear $q$-difference equation $$\sq \left(u_{\Lambda,1,2}^{[\l]}\right)=(1+(q-1)f_{1})u_{\Lambda,1,2}^{[\l]}+(q-1),$$  we find that for all $N\in \N^{*}$, $u_{\Lambda,1,2}^{[\l]}(z)$ equals to,
$$u_{\Lambda,1,2}^{[\l]}\left(q^{-N}z\right)\displaystyle\prod_{\ell=-N}^{-1}\left(1+(q-1)f_{1}\left(q^{\ell}z\right)\right)+\displaystyle\sum_{\nu=-N}^{-1}(q-1)\prod_{\ell=\nu+1}^{-1}\left(1+(q-1)f_{1}\left(q^{\ell}z\right)\right). $$
Using additionally the $q$-difference equation $$\sq \left(e_{q}(z^{-1})\right)=(1+(q-1)f_{1})e_{q}(z^{-1}),$$
we find that,
$$u_{\Lambda,1,2}^{[\l]}(z)=u_{\Lambda,1,2}^{[\l]}\left(q^{-N}z\right)\frac{e_{q}(z^{-1})}{e_{q}(q^{N}z^{-1})}+e_{q}(z^{-1})\displaystyle\int_{q^{-N}z}^{z} \frac{1}{e_{q}(q^{-1}t^{-1})}\frac{d_{q}t}{t}. $$
This solution looks like a solution obtained via a $q$-deformation of the variation of constants method. Indeed, using this method, we find a solution of $z\d \widetilde{y}-\widetilde{y}=z$ given by
$$\exp(z^{-1})\displaystyle\int_{0}^{z} \exp(-t^{-1})\frac{dt}{t}.$$
The goal of the next proposition is to generalise this example.
\end{ex}

\pagebreak[3]
\begin{propo}\label{propo1}
Let $\l\in \C^{*}\setminus\Sigma$. For all $1\leq j<k\leq m$, $N\in \N^{*}$, and $z\in \C^{*}\setminus -\l q^{\Z}$ sufficiently close to $0$, we have the equality of functions

$$u_{\Lambda,j,k}^{[\l]}(z)=u_{\Lambda,j,k}^{[\l]}\left(q^{-N}z\right)\frac{u_{\Lambda,j,j}^{[\l]}(z)}{u_{\Lambda,j,j}^{[\l]}\left(q^{-N}z\right)}+u_{\Lambda,j,j}^{[\l]}(z)\displaystyle\int_{q^{-N}z}^{z} \frac{u_{\Lambda,j+1,k}^{[\l]}(t)}{u_{\Lambda,j,j}^{[\l]}(qt)}\frac{d_{q}t}{t}.$$
Let us fix $z_{0}\in \C^{*}\setminus -\l q^{\Z}$ such that for all integers $1\leq j\leq m$, the function $u_{\Lambda,j,j}^{[\l]}$ is analytic on ${\{q^{\ell}z_{0},\ell \leq 0\}}$. If we assume that the following limit exists $\lim\limits_{N \to +\infty}\dfrac{t_{0}\left(\hat{g}_{j,k}\right) (q^{-N}z_{0})^{v_{0}(\hat{g}_{j,k})}u_{\Lambda,k,k}^{[\l]}(q^{-N}z_{0})}{u_{\Lambda,j,j}^{[\l]}(q^{-N}z_{0})}=:c_{\Lambda,j,k}^{[\l]}(z_{0})$, then we obtain that the Jackson integral $\displaystyle\int_{0}^{z_{0}} \frac{u_{\Lambda,j+1,k}^{[\l]}(t)}{u_{\Lambda,j,j}^{[\l]}(qt)}\frac{d_{q}t}{t}$ is well defined and that we have
$$u_{\Lambda,j,k}^{[\l]}(z_{0})=c_{\Lambda,j,k}^{[\l]}(z_{0})u_{\Lambda,j,j}^{[\l]}(z_{0})+u_{\Lambda,j,j}^{[\l]}(z_{0})\displaystyle\int_{0}^{z_{0}} \frac{u_{\Lambda,j+1,k}^{[\l]}(t)}{u_{\Lambda,j,j}^{[\l]}(qt)}\frac{d_{q}t}{t}.$$
\end{propo}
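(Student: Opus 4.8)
The plan is to read the first identity as a $q$-analogue of the variation of constants formula, and the second as its limit when $N\to +\infty$, the only real content of that limit being the identification of the boundary term. From $\sq U_{\Lambda}^{[\l]}=CU_{\Lambda}^{[\l]}$ and the bidiagonal shape of $C$ in (\ref{eq15}) I read off the scalar equations satisfied by the entries. Because $U_{\Lambda}^{[\l]}=F\hat{H}^{[\l]}\Lambda$ is a product of upper triangular matrices with $\Lambda$ diagonal, it is itself upper triangular, so its diagonal entries satisfy $\sq u_{\Lambda,j,j}^{[\l]}=(1+(q-1)f_{j})u_{\Lambda,j,j}^{[\l]}$, while for $j<k$
$$\sq u_{\Lambda,j,k}^{[\l]}=(1+(q-1)f_{j})u_{\Lambda,j,k}^{[\l]}+(q-1)u_{\Lambda,j+1,k}^{[\l]}.$$
Setting $w:=u_{\Lambda,j,k}^{[\l]}/u_{\Lambda,j,j}^{[\l]}$ and using $\sq u_{\Lambda,j,j}^{[\l]}(z)=u_{\Lambda,j,j}^{[\l]}(qz)$, a short computation gives $\dq w(z)=u_{\Lambda,j+1,k}^{[\l]}(z)/u_{\Lambda,j,j}^{[\l]}(qz)$. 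Telescoping $w(z)-w(q^{-N}z)=(q-1)\sum_{\ell=-N}^{-1}(\dq w)(q^{\ell}z)$, recognizing the right hand side as the Jackson integral $\int_{q^{-N}z}^{z}(\dq w)(t)\frac{d_{q}t}{t}$, and multiplying by $u_{\Lambda,j,j}^{[\l]}(z)$ yields the first displayed formula. This part is an elementary identity of functions, valid wherever the $u_{\Lambda,j,j}^{[\l]}(q^{\ell}z)$ do not vanish, which holds for $z$ close enough to $0$ off the divisor $-\l q^{\Z}$.

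For the second statement I let $N\to+\infty$ in this formula. The Jackson integral is a partial sum of $\int_{0}^{z_{0}}$, so everything reduces to the convergence of the boundary term $\frac{u_{\Lambda,j,k}^{[\l]}(q^{-N}z_{0})}{u_{\Lambda,j,j}^{[\l]}(q^{-N}z_{0})}\,u_{\Lambda,j,j}^{[\l]}(z_{0})$. I split its coefficient as
$$\frac{u_{\Lambda,j,k}^{[\l]}(q^{-N}z_{0})}{u_{\Lambda,j,j}^{[\l]}(q^{-N}z_{0})}=\frac{u_{\Lambda,j,k}^{[\l]}(q^{-N}z_{0})}{t_{0}(\hat{g}_{j,k})(q^{-N}z_{0})^{v_{0}(\hat{g}_{j,k})}u_{\Lambda,k,k}^{[\l]}(q^{-N}z_{0})}\cdot\frac{t_{0}(\hat{g}_{j,k})(q^{-N}z_{0})^{v_{0}(\hat{g}_{j,k})}u_{\Lambda,k,k}^{[\l]}(q^{-N}z_{0})}{u_{\Lambda,j,j}^{[\l]}(q^{-N}z_{0})},$$
the second factor tending to $c_{\Lambda,j,k}^{[\l]}(z_{0})$ by hypothesis. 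It remains to show the first factor tends to $1$, i.e. that $u_{\Lambda,j,k}^{[\l]}/u_{\Lambda,k,k}^{[\l]}$ has leading behaviour $t_{0}(\hat{g}_{j,k})z^{v_{0}(\hat{g}_{j,k})}$ as $z\to 0$ along $q^{-\N}z_{0}$; since $\Lambda$ is diagonal this ratio equals $(F\hat{H}^{[\l]})_{j,k}/(F\hat{H}^{[\l]})_{k,k}$.

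The main obstacle is this last asymptotic identification, where the analytic input enters. I would first determine the formal expansion of $F\hat{H}^{[\l]}$: both $\hat{G}$ of Lemma \ref{lem1} and $F\hat{H}$ are formal gauge transformations taking the diagonal monomial matrix to $C$, so the non resonance condition (\ref{eq20}) forces $(F\hat{H})^{-1}\hat{G}$ to commute with that diagonal matrix and hence to be a $\sq$-invariant, i.e. constant, diagonal matrix; thus $F\hat{H}$ is upper triangular and equals $\hat{G}$ up to right multiplication by a constant diagonal matrix, a factor that cancels in the quotient $(F\hat{H}^{[\l]})_{j,k}/(F\hat{H}^{[\l]})_{k,k}$. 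The $q$-Gevrey asymptotics of $\hat{H}^{[\l]}$ recalled in $\S\ref{sec22}$ then give, as $z\to 0$ on any region bounded away from $-\l q^{\Z}$, that $(F\hat{H}^{[\l]})_{k,k}$ tends to a nonzero constant and $(F\hat{H}^{[\l]})_{j,k}(z)$ is asymptotic to that same constant times $t_{0}(\hat{g}_{j,k})z^{v_{0}(\hat{g}_{j,k})}$, so their quotient is asymptotic to $t_{0}(\hat{g}_{j,k})z^{v_{0}(\hat{g}_{j,k})}$; as $z_{0}\notin -\l q^{\Z}$, the spiral $q^{-\N}z_{0}$ lies in such a region. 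The delicate point is that $v_{0}(\hat{g}_{j,k})$ may be negative (Remark \ref{rem5}), so the estimate of $\S\ref{sec22}$, stated for entries in $\C[[z]]$, must be applied after clearing poles, namely to $z^{n}F\hat{H}^{[\l]}$ for $n$ large, using that this is $q$-Gevrey asymptotic to $z^{n}F\hat{H}$. Once the first factor is shown to tend to $1$, the boundary term tends to $c_{\Lambda,j,k}^{[\l]}(z_{0})u_{\Lambda,j,j}^{[\l]}(z_{0})$; since $u_{\Lambda,j,k}^{[\l]}(z_{0})$ does not depend on $N$, the Jackson integral $\int_{0}^{z_{0}}$ converges and the stated equality follows.
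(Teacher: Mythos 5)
Your proof is correct and follows essentially the same route as the paper: derive the first identity by iterating the scalar $q$-difference equation (your telescoping of $\dq w$ is just a repackaging of that iteration), then split the boundary coefficient into the same two factors and use the $q$-Gevrey asymptotics of $F\hat{H}^{[\l]}$ along $-\l q^{\Z}$, together with $t_{0}(\hat{g}_{k,k})=1$, to show the first factor tends to $1$. You even make explicit two points the paper leaves implicit, namely the identification of $F\hat{H}$ with $\hat{G}$ up to a constant diagonal matrix via the non-resonance condition, and the clearing of poles before applying the asymptotic estimate when $v_{0}(\hat{g}_{j,k})<0$.
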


\begin{proof}
Iterating the linear $q$-difference equation $$\sq \left(u_{\Lambda,j,k}^{[\l]}\right)=(1+(q-1)f_{j})u_{\Lambda,j,k}^{[\l]}+(q-1)u_{\Lambda,j+1,k}^{[\l]},$$  we find for all $N\in \N^{*}$, $u_{\Lambda,j,k}^{[\l]}(z)$ equals (we remind that the empty product has value $1$)
$$u_{\Lambda,j,k}^{[\l]}\left(q^{-N}z\right)\displaystyle\prod_{\ell=-N}^{-1}\left(1+(q-1)f_{j}\left(q^{N}z\right)\right)+\displaystyle\sum_{\nu=-N}^{-1}(q-1)u_{\Lambda,j+1,k}^{[\l]}(q^{\nu}z)\prod_{\ell=\nu+1}^{-1}\left(1+(q-1)f_{j}\left(q^{\ell}z\right)\right). $$
Using additionally the $q$-difference equation $$\sq \left(u_{\Lambda,j,j}^{[\l]}\right)=(1+(q-1)f_{j})u_{\Lambda,j,j}^{[\l]},$$
we find that 
$$u_{\Lambda,j,k}^{[\l]}(z)=u_{\Lambda,j,k}^{[\l]}\left(q^{-N}z\right)\frac{u_{\Lambda,j,j}^{[\l]}\left(z\right)}{u_{\Lambda,j,j}^{[\l]}\left(q^{-N}z\right)}+u_{\Lambda,j,j}^{[\l]}(z)\displaystyle\int_{q^{-N}z}^{z} \frac{u_{\Lambda,j+1,k}^{[\l]}(t)}{u_{\Lambda,j,j}^{[\l]}(qt)}\frac{d_{q}t}{t}. $$
Then, we obtain that  $u_{\Lambda,j,k}^{[\l]}(z)$ equals to

\begin{equation}\label{eq1}\begin{array}{ll}
&\dfrac{u_{\Lambda,j,k}^{[\l]}\left(q^{-N}z\right)}{t_{0}\left(\hat{g}_{j,k}\right) (q^{-N}z)^{v_{0}(\hat{g}_{j,k})}u_{\Lambda,k,k}^{[\l]}\left(q^{-N}z\right)}\dfrac{t_{0}\left(\hat{g}_{j,k}\right) (q^{-N}z)^{v_{0}(\hat{g}_{j,k})}u_{\Lambda,k,k}^{[\l]}\left(q^{-N}z\right)}{u_{\Lambda,j,j}^{[\l]}\left(q^{-N}z\right)}u_{\Lambda,j,j}^{[\l]}\left(z\right)\\
+&u_{\Lambda,j,j}^{[\l]}(z)\displaystyle\int_{q^{-N}z}^{z} \frac{u_{\Lambda,j+1,k}^{[\l]}(t)}{u_{\Lambda,j,j}^{[\l]}(qt)}\frac{d_{q}t}{t}.\end{array}\end{equation}

Let us fix $z_{0}\in \C^{*}\setminus -\l q^{\Z}$ such that for all integers $1\leq j\leq m$, $u_{\Lambda,j,j}^{[\l]}$ is analytic on ${\{q^{\ell}z_{0},\ell \leq 0\}}$ and assume now that the following limit exists 

\begin{equation}\label{eq3}
\lim\limits_{N \to +\infty}\frac{t_{0}\left(\hat{g}_{j,k}\right) (q^{-N}z_{0})^{v_{0}(\hat{g}_{j,k})}u_{\Lambda,k,k}^{[\l]}(q^{-N}z_{0})}{u_{\Lambda,j,j}^{[\l]}(q^{-N}z_{0})}=:c_{\Lambda,j,k}^{[\l]}(z_{0}).\end{equation}

We remind that $v_{0}\left(\hat{g}_{k,k}\right)=0$. Then, the function $\dfrac{u_{\Lambda,j,k}^{[\l]}}{t_{0}\left(\hat{g}_{j,k}\right) z^{v_{0}(\hat{g}_{j,k})}u_{\Lambda,k,k}^{[\l]}}$ is $q$-Gevrey asymptotic to the series $\frac{\hat{g}_{j,k}}{t_{0}\left(\hat{g}_{j,k}\right) z^{v_{0}(\hat{g}_{j,k})}\hat{g}_{k,k}}\in \C[[z]] $ along the divisor $-\l q^{\Z}$. Since $\hat{g}_{k,k}$ has constant term equal to $1$, we find that  $$\lim\limits_{N \to +\infty}\frac{u_{\Lambda,j,k}^{[\l]}(q^{-N}z_{0})}{t_{0}\left(\hat{g}_{j,k}\right) (q^{-N}z_{0})^{v_{0}(\hat{g}_{j,k})}u_{\Lambda,k,k}^{[\l]}(q^{-N}z_{0})}=1.$$
Then, we use this limit, (\ref{eq1}) and (\ref{eq3}), to deduce that the Jackson integral $\displaystyle\int_{0}^{z_{0}} \frac{u_{\Lambda,j+1,k}^{[\l]}(t)}{u_{\Lambda,j,j}^{[\l]}(qt)}\frac{d_{q}t}{t}$ is well defined and that we have 
$$u_{\Lambda,j,k}^{[\l]}(z_{0})=c_{\Lambda,j,k}^{[\l]}(z_{0})u_{\Lambda,j,j}^{[\l]}(z_{0})+u_{\Lambda,j,j}^{[\l]}(z_{0})\displaystyle\int_{0}^{z_{0}} \frac{u_{\Lambda,j+1,k}^{[\l]}(t)}{u_{\Lambda,j,j}^{[\l]}(qt)}\frac{d_{q}t}{t}.$$
\end{proof}

\pagebreak[3]
\section{$q$-dependency of meromorphic solutions of linear differential equations.}\label{sec4}
From now, we see~$q$ as a parameter in~$]1,\infty[$. When we say that~$q$ is close to~$1$, we mean that~$q$ will be in the neighbourhood of~$1$ in~$]1,\infty[$. Formally, we have the convergence~$\lim\limits_{q \to 1}\dq =\d$. In $\S \ref{sec41}$ we state and prove a preliminary result about confluence. Given a linear differential equation $\widetilde{\D}$ and a family of linear $q$-difference equations $\D_{q}$, we state that under convenient assumptions, a family of basis of meromorphic solutions of $\D_{q}$ defined in $\S \ref{sec22}$ converges, when $q$ goes to $1$, to a basis of meromorphic solutions of $\widetilde{\D}$. In $\S \ref{sec43}$, we use the result of $\S\ref{sec41}$ on an example using an approach that will be generalized in $\S \ref{sec42}$, where we state and prove our main result. Under convenient assumptions, we prove that given a linear differential equation $\widetilde{\D}$ and a basis of meromorphic solutions, we may define a family of linear $q$-difference equations $\D_{q}$, that is a $q$-deformation of $\widetilde{\D}$, and for every $q$ close to~$1$, a basis of meromorphic solutions of $\D_{q}$, that is one of these defined in $\S \ref{sec22}$, and that converges, when $q$ goes to $1$, to the given basis of meromorphic solutions of $\widetilde{\D}$. We refer to $\S \ref{sec1},\S\ref{sec2},\S\ref{sec3}$ for the notations used in this section.

\pagebreak[3]
\subsection{Confluence of meromorphic solutions of linear differential equations.}\label{sec41}

Let us consider a family of equations (see below for the definition of the coefficients)
$$
\left\{\begin{array}{lll}
\D_{q}&:=&\left(\dq -f_{m}(z,q)\right)\dots\left(\dq -f_{1}(z,q)\right)\\
\widetilde{\D}&:=&\left(\d -\widetilde{f}_{m}(z)\right)\dots\left(\d -\widetilde{f}_{1}(z)\right).
\end{array} \right. 
$$
As in $\S \ref{sec1},\S \ref{sec2}$, we are going to see the equations as systems:
$$\left\{\begin{array}{lll}
\dq Y(z,q)&=&C(z,q)Y(z,q)\\ 
 \d \widetilde{Y}(z)&=&\widetilde{C}(z)\widetilde{Y}(z).
 \end{array} \right. $$
Let us assume that: \begin{trivlist}
\item \textbf{(H1)} For all $q$, $z\mapsto f_{1}(z,q)\in \C(\{z\}),\dots,z\mapsto f_{m}(z,q)\in \C(\{z\})$. Furthermore, assume that $\widetilde{\D}$ is a linear differential system in coefficients in $\C(\{z\})$ and $\widetilde{f}_{1},\dots,\widetilde{f}_{m}\in \C((z))$.
\item \textbf{(H2)} For all $1\leq j\leq m$,  $t_{0}\left(f_{j}\right)$ and $\arg\left(t_{0}\left(f_{j}\right)\right)$ converge  when $q\to 1$. Moreover, we assume that for $q$ sufficiently close to~$1$, $ v_{0}\left(1+(q-1)f_{j}\right)$ is constant, satisfies for all $1\leq j<k\leq m$, $v_{0}\left( 1+(q-1)f_{j}\right)\leq v_{0}\left( 1+(q-1)f_{k}\right)$ and the non resonance condition (\ref{eq20}) is satisfied.
\item \textbf{(H3)} For all $1\leq j<k\leq m$, $v_{0}\left( \widetilde{f}_{j}\right)\leq v_{0}\left( \widetilde{f}_{k}\right)$.
\end{trivlist}

Previous assumptions and Lemma \ref{lem1} imply the existence of a set of singular directions $\widetilde{\Sigma}\subset\R$,  finite modulo $2\pi$, such that for all $\l\in \C^{*}$ with $d:=\arg(\l) \in \R\setminus \widetilde{\Sigma}$, for all $q$ close to $1$, for every $z\mapsto \L(z,q)\in \mathrm{GL}_{m}\left(\mathcal{M}(\C^{*})\right)$, family of diagonal matrices solution of 

\begin{equation}\label{eq8}
\begin{array}{ll}
\sq \Lambda&=\mathrm{Diag}\left(
z^{v_{0}(1+(q-1)f_{1})}t_{0}\left(1+(q-1)f_{1}\right),\dots,z^{v_{0}(1+(q-1)f_{m})}t_{0}\left(1+(q-1)f_{m}\right)\right)\Lambda\\
&=\Lambda\mathrm{Diag}\left(
z^{v_{0}(1+(q-1)f_{1})}t_{0}\left(1+(q-1)f_{1}\right),\dots,z^{v_{0}(1+(q-1)f_{m})}t_{0}\left(1+(q-1)f_{m}\right)\right),\end{array}
\end{equation}
we may consider $\left(u_{\L,j,k}^{[\l]}(z,q)\right):=U_{\L}^{[\l]}(z,q)$, the fundamental solution for the $q$-difference system ${\dq Y(z,q)=C(z,q)Y(z,q)}$, which is defined in $\S \ref{sec22}$, and $\widetilde{S}^{d}\left(\widetilde{H}\right)\mathrm{Diag} \left(e^{\widetilde{L}_{1} \log(z)}e^{\widetilde{\l}_{1}\times\mathrm{Id}_{m_{1}}},\dots,e^{\widetilde{L}_{r} \log(z)}e^{\widetilde{\l}_{r}\times\mathrm{Id}_{m_{r}}}\right)$, the fundamental solution for the system ${ \d \widetilde{Y}(z)=\widetilde{C}(z)\widetilde{Y}(z)}$, which is defined in $\S \ref{sec12}$. Until the end of the subsection, we fix $\l\in \C^{*}$ with $d:=\arg(\l)  \in \R\setminus \widetilde{\Sigma} $. Theorem \ref{theo4} states that 
$$\widetilde{\D}=\left(\d -\widetilde{S}^{d}\left(\widetilde{f}_{m}\right)\right)\dots\left(\d -\widetilde{S}^{d}\left(\widetilde{f}_{1}\right)\right).$$
Until the end of the subsection, we fix $a,\e>0$ and we set
$$D_{d,a,\e}:=\left\{z\in \widetilde{\C}\left|z\in \overline{S}\left(d-\e,d+\e \right),\hbox{ with }|z|<a\right\}\right..$$ 
\pagebreak[3]
\begin{defi}\label{defi2}
Let $z\mapsto f(z,q)$ that is meromorphic on any compact subset of $D_{d,a,\e}$ for $q$ close to $1$. We say that $f\in \mathbb{B}_{d,a,\e}$ if for every $K$, compact subset of $D_{d,a,\e}$, there exist $q_{0}>1$, $N_{0}\in \N^{*}$, and $M_{0}>0$, such that for all $q\in ]1,q_{0}[$, $N>N_{0}$, $z\in K$, 
 $$\left|f\left(q^{-N}z,q\right)\right|<M_{0}.
$$
\end{defi}

\pagebreak[3]
\begin{rem}\label{rem4}
Note that $\mathbb{B}_{d,a,\e}$ is a ring.
\end{rem}

Until the end of the subsection, we fix $z\mapsto \L(z,q)\in \mathrm{GL}_{m}\left(\mathcal{M}(\C^{*})\right)$, family of diagonal matrices solution of (\ref{eq8}). We assume that:
\begin{trivlist}
\item \textbf{(H4)}  For all $1\leq j\leq m$, we have the uniform convergence in every  compact subset of~$D_{d,a,\e}$ to the function with no zeroes on $D_{d,a,\e}$ $${\lim\limits_{q \to 1}u_{\L,j,j}^{[\l]}(z,q)=:\widetilde{u}_{\L,j,j}^{d}(z)\in \mathcal{A}(d-\e,d+\e )}.$$

\item \textbf{(H5)} For all $1\leq j< m$, 
 $$\dfrac{u_{\Lambda,j+1,j+1}^{[\l]}}{z\sq\left(u_{\Lambda,j,j}^{[\l]}\right)}\in \mathbb{B}_{d,a,\e}.
$$
\end{trivlist}
\pagebreak[3]
\begin{rem}
In $\S \ref{sec42}$, we will show that on a convenient framework, 
\begin{itemize} 
\item we may chose the matrix $\L$ such that \textbf{(H4)} is satisfied,
\item there exists a direction $d\in \R$, and $a,\e>0$ such that \textbf{(H5)} is satisfied.
\end{itemize}
\end{rem}

Let ${(\hat{g}_{j,k}(z,q)):=\hat{G}(z,q)}$ be the formal matrix defined in Lemma \ref{lem1} that satisfies
$$C(z,q)=\hat{G}(z,q)\left[\mathrm{Diag}\left(z^{v_{0}(1+(q-1)f_{1})}t_{0}\left(1+(q-1)f_{1}\right),\dots,z^{v_{0}(1+(q-1)f_{m})}t_{0}\left(1+(q-1)f_{m}\right)\right)\right]_{\displaystyle\sq}.$$

%\begin{trivlist}
%\item \textbf{(H6)}
%We assume that for all $1\leq j<k\leq m$, for all $z\in D_{d,a,\e}$, for all $q$ close to $1$, the following limit exists  and is independent of $z$ $$\lim\limits_{N \to +\infty}\left|\dfrac{t_{0}\left(\hat{g}_{j,k}(z,q)\right) (q^{-N}z)^{v_{0}(\hat{g}_{j,k})}u_{\Lambda,k,k}^{[\l]}(q^{-N}z)}{u_{\Lambda,j,j}^{[\l]}(q^{-N}z)}\right|=:c_{\L,j,k}^{[\l]}(q).$$
%Finally, assume that $c_{\L,j,k}^{[\l]}(q)$ converges to $\widetilde{c}_{\L,j,k}$ when $q$ goes to $1$.\\ \par 
%\end{trivlist}

We are going to see in the proof of Theorem \ref{theo2}, that we may define ${\left(\widetilde{u}^{d}_{\L,j,k}(z)\right)=\widetilde{U}^{d}_{\L}(z)\in \mathrm{GL}_{m}(\mathcal{A}(d-\e,d+\e ))}$, upper triangular matrix with diagonal terms defined in \textbf{(H4)}, as follows:
 for all $1\leq j<k\leq m$, for all $z\in D_{d,a,\e}$, set 
$$\widetilde{u}^{d}_{\L,j,k}(z):=\widetilde{u}^{d}_{\L,j,j}(z)\displaystyle\int_{0}^{z} \frac{\widetilde{u}^{d}_{\L,j+1,k}(t)}{\widetilde{u}^{d}_{\L,j,j}(t)}\frac{dt}{t}.$$
One may check that whether it is defined, $\widetilde{U}_{\L}^{d}$ is a fundamental solution for $$\d \widetilde{Y}(z)=\widetilde{C}(z)\widetilde{Y}(z).$$
\pagebreak[3]
\begin{rem}
Note that $\widetilde{U}_{\L}^{d}$ is not necessarily one of the fundamental solution defined in $\S \ref{sec12}$. However, since they are meromorphic fundamental solution for the same linear differential system, there exists $C_{\L,d}\in \mathrm{GL}_{m}(\C)$, such that $\widetilde{U}_{\L}^{d}C_{\L,d}$ equals to the corresponding fundamental solution defined in $\S \ref{sec12}$.\end{rem}
We may now state the main result of the subsection.

\pagebreak[3]\begin{theo}\label{theo2}
We have uniform convergence
 $$\lim\limits_{q \to 1}U_{\L}^{[\l]}(z,q)=\widetilde{U}_{\L}^{d}(z),$$ 
in every the compact subset of $D_{d,a,\e}$.
\end{theo}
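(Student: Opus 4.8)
The plan is to argue by induction on the difference $k-j$, using the integral representation of Proposition \ref{propo1} to pass from the $q$-difference entries $u_{\L,j,k}^{[\l]}$ to the differential entries $\widetilde u_{\L,j,k}^{d}$, which are themselves built by the analogous \emph{classical} integral recursion. The base case $k=j$ is exactly the hypothesis \textbf{(H4)}, which also guarantees that the diagonal limit $\widetilde u_{\L,j,j}^{d}$ has no zeroes on $D_{d,a,\e}$, so that the denominators appearing below do not vanish. Before starting the induction I would check that the matrix $\widetilde U_{\L}^{d}$ is well defined, i.e. that each classical integral $\int_{0}^{z}\big(\widetilde u_{\L,j+1,k}^{d}(t)/\widetilde u_{\L,j,j}^{d}(t)\big)\,dt/t$ converges at $t=0$; this is a downward induction on $j$ relying on the valuation of $\widetilde u_{\L,j+1,k}^{d}/\widetilde u_{\L,j,j}^{d}$ near the origin.

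For the inductive step, fix $j<k$ and assume the uniform convergence $u_{\L,j',k'}^{[\l]}\to\widetilde u_{\L,j',k'}^{d}$ on compacts of $D_{d,a,\e}$ for every pair with $k'-j'<k-j$; in particular this holds for $(j+1,k)$. Choosing $z$ in a compact subset of $D_{d,a,\e}$ away from the pole spiral $-\l q^{\Z}$ (possible since $d=\arg(\l)\notin\widetilde\Sigma$), the hypotheses of Proposition \ref{propo1} hold for $q$ close to $1$, and we may write
$$u_{\L,j,k}^{[\l]}(z,q)=c_{\L,j,k}^{[\l]}(z,q)\,u_{\L,j,j}^{[\l]}(z,q)+u_{\L,j,j}^{[\l]}(z,q)\int_{0}^{z}\frac{u_{\L,j+1,k}^{[\l]}(t,q)}{u_{\L,j,j}^{[\l]}(qt,q)}\frac{d_{q}t}{t}.$$
It then suffices to prove two facts, both uniformly on compacts: first, that the Jackson integral converges, as $q\to1$, to the classical integral $\int_{0}^{z}\big(\widetilde u_{\L,j+1,k}^{d}(t)/\widetilde u_{\L,j,j}^{d}(t)\big)\,dt/t$; and second, that the homogeneous contribution $c_{\L,j,k}^{[\l]}(z,q)\,u_{\L,j,j}^{[\l]}(z,q)$ tends to $0$. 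Granting these, the right-hand side converges to $\widetilde u_{\L,j,j}^{d}(z)\int_{0}^{z}\big(\widetilde u_{\L,j+1,k}^{d}(t)/\widetilde u_{\L,j,j}^{d}(t)\big)\,dt/t=\widetilde u_{\L,j,k}^{d}(z)$, closing the induction.

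To handle the Jackson integral I would write it as the Riemann-type sum $(q-1)\sum_{\ell\le-1}g_{q}(q^{\ell}z)$ with $g_{q}(t):=u_{\L,j+1,k}^{[\l]}(t,q)/u_{\L,j,j}^{[\l]}(qt,q)$ and split it into a bulk part, over the finitely many nodes $q^{\ell}z$ lying outside a small disc around $0$, and a tail part over the remaining nodes. On the bulk, the integrand converges uniformly to $\widetilde u_{\L,j+1,k}^{d}/\widetilde u_{\L,j,j}^{d}$ by the induction hypothesis together with \textbf{(H4)} (using $qt\to t$), while the logarithmic mesh of the nodes is $\log q\sim q-1$, so the sum converges to the corresponding classical integral after comparing the factors $q-1$ and $\log q$, whose ratio tends to $1$. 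On the tail, the uniform bound provided by \textbf{(H5)} — the membership of $u_{\L,j+1,j+1}^{[\l]}/\big(z\,\sq(u_{\L,j,j}^{[\l]})\big)$ in the ring $\mathbb{B}_{d,a,\e}$ of Definition \ref{defi2}, propagated to the pair $(j+1,k)$ through the induction — bounds $|g_{q}(q^{-N}z)|$ uniformly in $q$ and $N$, so the tail is majorised by a fixed multiple of the integral near $0$ of an integrable function and can be made arbitrarily small. The vanishing of the $c$-term follows from the same near-origin estimates: since $c_{\L,j,k}^{[\l]}$ equals the limit of $u_{\L,j,k}^{[\l]}/u_{\L,j,j}^{[\l]}$ along the spiral $q^{-N}z\to0$, and the limiting differential ratio $\widetilde u_{\L,j,k}^{d}/\widetilde u_{\L,j,j}^{d}$ vanishes at the origin, the product $c_{\L,j,k}^{[\l]}u_{\L,j,j}^{[\l]}$ is controlled by the tail contribution and tends to $0$. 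I expect the genuine obstacle to be this Jackson-integral step: it is a double limit ($N\to\infty$ first, then $q\to1$) whose interchange is legitimate only because of the uniform tail control near $t=0$, which is precisely the purpose of the assumption \textbf{(H5)} and of the ring $\mathbb{B}_{d,a,\e}$.
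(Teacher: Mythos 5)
Your proposal is correct and follows essentially the same route as the paper: the paper likewise feeds Proposition \ref{propo1} with \textbf{(H4)} as the base case, packages your bulk/tail Jackson-integral argument as Lemma \ref{lem2} (proved there by dominated convergence, resting on exactly the $\mathbb{B}_{d,a,\e}$ tail bound you identify as the crux), and isolates the existence and uniform vanishing of the terms $c_{\L,j,k}^{[\l]}$ as Lemma \ref{lem7}, again via \textbf{(H5)}. The only difference is bookkeeping: instead of your induction on $k-j$, the paper unrolls the recursion for fixed $(j,k)$ into nested Jackson integrals $F_{\L,\ell}^{[\l]}$, $j\leq \ell\leq k-1$, and runs a descending induction on $\ell$, using that $\mathbb{B}_{d,a,\e}$ is a ring (Remark \ref{rem4}) to propagate membership through each integration step --- the same mechanism in a different order.
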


Before proving the theorem, we prove two lemmas. We refer to $\S \ref{sec3}$ for the definition of the Jackson integral.

\pagebreak[3]
\begin{lem}\label{lem2}
Let $f\in \mathbb{B}_{d,a,\e}$ that converges uniformly to $\widetilde{f}\in \mathcal{A}(d-\e,d+\e)$ in every compact subset of $D_{d,a,\e}$, when $q$ goes to $1$. Then, for all $z\in D_{d,a,\e}$, for all $q$ close to~$1$, $\displaystyle\int_{0}^{z}f(t,q)d_{q}t$ is well defined and belongs to  $\mathbb{B}_{d,a,\e}$ (resp. $\displaystyle \int_{0}^{z}\widetilde{f}(t)dt$ is well defined and belongs to  $\mathcal{A}(d-\e,d+\e)$). Moreover, we have the uniform convergence 
$$\lim\limits_{q \to 1}\displaystyle \int_{0}^{z}f(t,q)d_{q}t=\displaystyle \int_{0}^{z}\widetilde{f}(t)dt,$$ 
in every compact subset of $D_{d,a,\e}$.
\end{lem}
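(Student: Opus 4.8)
The plan is to prove Lemma \ref{lem2} by comparing the Jackson integral, which is a Riemann-type sum over the $q$-spiral $q^{\Z}z$, with the classical integral along the segment $[0,z]$, exploiting the two hypotheses that $f\in \mathbb{B}_{d,a,\e}$ (so that the values $f(q^{-N}z,q)$ stay bounded uniformly as $N\to\infty$ and $q\to 1$) and that $f$ converges uniformly to $\widetilde{f}$ on compacts. First I would establish that $\int_{0}^{z}f(t,q)d_{q}t$ is well defined, i.e.\ that the series
$$\int_{0}^{z}f(t,q)d_{q}t=(q-1)\sum_{\ell\leq -1}f(q^{\ell}z,q)q^{\ell}z$$
converges for $q$ close to $1$. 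The key point is that for $\ell$ very negative the point $q^{\ell}z$ approaches $0$; the boundedness coming from $\mathbb{B}_{d,a,\e}$ controls $|f(q^{\ell}z,q)|$ by a constant $M_0$ uniformly, while the geometric factor $q^{\ell}z\to 0$ forces absolute convergence of the tail. Symmetrically, uniform convergence of $f$ to $\widetilde{f}$ on a compact set containing $[0,z]$, together with continuity of $\widetilde{f}$ up to $0$ (which follows since $\widetilde{f}\in\mathcal{A}(d-\e,d+\e)$ is the uniform limit of functions bounded near $0$), shows that $\int_0^z\widetilde{f}(t)\,dt$ is a convergent improper Riemann integral.

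Next I would prove the convergence of the $q$-integral to the classical one. The clean way is to split the Jackson sum at a cutoff index $-N$: write
$$\int_{0}^{z}f(t,q)d_{q}t=\int_{q^{-N}z}^{z}f(t,q)d_{q}t+\int_{0}^{q^{-N}z}f(t,q)d_{q}t.$$
For the tail $\int_{0}^{q^{-N}z}$, the $\mathbb{B}_{d,a,\e}$ bound gives $\left|\int_{0}^{q^{-N}z}f(t,q)d_{q}t\right|\leq M_0(q-1)\sum_{\ell\leq -N-1}q^{\ell}|z|=M_0|z|q^{-N}$, hence the tail is $\leq M_0|z|q^{-N}$, which I can make uniformly small by choosing $N$ large, independently of $q$ near $1$. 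For the finite piece $\int_{q^{-N}z}^{z}f(t,q)d_{q}t$, this is a genuine finite Riemann-type sum $(q-1)\sum_{\ell=-N}^{-1}f(q^{\ell}z,q)q^{\ell}z$ approximating $\int_{q^{-N}z}^{z}\widetilde{f}(t)\,dt$; using uniform convergence $f\to\widetilde{f}$ and the standard fact that a geometric Riemann sum with mesh $(q-1)$ converges to the integral as $q\to 1$, this finite piece converges to $\int_{q^{-N}z}^{z}\widetilde{f}(t)\,dt$, and that in turn approaches $\int_0^z\widetilde{f}(t)\,dt$ as $N\to\infty$ with the same tail estimate. Combining the two estimates by a standard $3\e$ argument yields the uniform limit.

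To finish, I would verify the two membership claims: that $\int_0^z f(t,q)d_qt\in\mathbb{B}_{d,a,\e}$ and that $\int_0^z\widetilde{f}(t)\,dt\in\mathcal{A}(d-\e,d+\e)$. The former follows because $\int_{0}^{q^{-N}z}f(t,q)d_qt$ is itself bounded by $M_0|q^{-N}z|\leq M_0 a$ uniformly, which is exactly the boundedness required by Definition \ref{defi2}; the latter is immediate since the primitive of an analytic function on the sector is again analytic there and the improper integral converges. The main obstacle I anticipate is handling the uniformity of all estimates simultaneously in $z$ (over a compact $K$), in $q$ (near $1$), and in the cutoff $N$: one must be careful that the constants $M_0,q_0,N_0$ furnished by the definition of $\mathbb{B}_{d,a,\e}$ depend only on $K$ and not on the individual point, and that the convergence of the finite Riemann sum is uniform on $K$. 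Once the uniformity of the tail bound $M_0|z|q^{-N}$ and of the finite-sum convergence are both secured on $K$, the $3\e$ argument closes the proof.
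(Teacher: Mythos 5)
Your proposal is correct and follows essentially the same route as the paper: in both, the key estimate is the uniform bound $\left|f\left(q^{-N}z,q\right)\right|\le M_{1}$ valid for \emph{all} $N\in\N^{*}$ (obtained from the $\mathbb{B}_{d,a,\e}$ bound for $N>N_{0}$, supplemented by uniform convergence of $f$ on a compact set covering the finitely many indices $1\le N\le N_{0}$), which yields the tail bound $M_{1}\left|q^{-N}z\right|$ and hence well-definedness, analyticity of the Jackson integral as a uniformly convergent series of analytic functions, and membership in $\mathbb{B}_{d,a,\e}$. The only difference is cosmetic: where you close the limit by a cutoff at $q^{-N}z$ and a $3\e$ comparison of the finite Jackson sum with a Riemann sum, the paper invokes the dominated convergence theorem directly from the same tail bound.
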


\begin{proof}[Proof of Lemma \ref{lem2}]
Let $K$, be a compact subset of $D_{d,a,\e}$, $q_{0}>1$, $N_{0}\in \N^{*}$, and $M_{0}>0$, such that for all $q\in ]1,q_{0}[$, $N>N_{0}$, $z\in K$,  we have $$\left|f\left(q^{-N}z,q\right)\right|<M_{0}.$$ 
We now use the uniform convergence of $f$ on a compact subset of $D_{d,a,\e}$ that contains $\left\{ q^{-N}z\Big| z\in K, 1\leq N\leq N_{0},q\in ]1,q_{0}[\right\}$, to deduce that there exist 
$q_{1}>1$, and $M_{1}>0$, such that for all $q\in ]1,q_{1}[$, $N\in \N^{*}$, $z\in K$, we have $\left|f\left(q^{-N}z,q\right)\right|<M_{1}$.
It follows that for all $q$ close to $1$, for all $z\in K$, the Jackson integral 
$\int_{0}^{z}f(t,q)d_{q}t$ is well defined and for all $N\in \N^{*}$,
\begin{equation}\label{eq4}
\left|\int_{0}^{q^{-N}z}f(t,q)d_{q}t\right|\leq \left|q^{-N}z\right|M_{1}.\end{equation}
  Since $\int_{0}^{z}f(t,q)d_{q}t$ is a series of analytic functions, we obtain that for all $q$ close to $1$, $z\mapsto \int_{0}^{z}f(t,q)d_{q}t$ is analytic on $K$. To deduce $\int_{0}^{z}f(t,q)d_{q}t\in \mathbb{B}_{d,a,\e}$, we use (\ref{eq4}).\par 
 The facts that $\displaystyle \int_{0}^{z}\widetilde{f}(t)dt$ is well defined, belongs to  $\mathcal{A}(d-\e,d+\e)$ and the uniform convergence 
$$\lim\limits_{q \to 1}\displaystyle \int_{0}^{z}f(t,q)d_{q}t=\displaystyle \int_{0}^{z}\widetilde{f}(t)dt,$$ 
in every compact subset of $D_{d,a,\e}$, is a straightforward application of the dominated convergence theorem, we may apply because of (\ref{eq4}).
\end{proof}

\pagebreak[3]
\begin{lem}\label{lem7}
For all $1\leq j<k\leq m$, for all $z\in D_{d,a,\e}$, for all $q$ close to $1$, the following limit exists $$\lim\limits_{N \to +\infty}\dfrac{t_{0}\left(\hat{g}_{j,k}(z,q)\right) (q^{-N}z)^{v_{0}(\hat{g}_{j,k})}u_{\Lambda,k,k}^{[\l]}(q^{-N}z,q)}{u_{\Lambda,j,j}^{[\l]}(q^{-N}z,q)}=:c_{\L,j,k}^{[\l]}(z,q).$$
Moreover we have the uniform convergence $\lim\limits_{q \to 1}c_{\L,j,k}^{[\l]}(z,q)=0$ on the compact subset of $D_{d,a,\e}$.
\end{lem}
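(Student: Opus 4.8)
The plan is to trade the abstract limit for an explicit computation with the theta-type blocks $\T_{q}$ and $\L_{q,a}$, and then to read off both the existence of the limit and its vanishing as $q\to1$.

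First I would split the diagonal entries. Since $U_{\L}^{[\l]}=F\hat{H}^{[\l]}\L$ is upper triangular and $\L$ is diagonal, $u_{\L,j,j}^{[\l]}=\gamma_{j}\,\l_{j,j}$, where $\gamma_{j}$ is the $(j,j)$ entry of the meromorphic gauge $F\hat{H}^{[\l]}$ and $\l_{j,j}$ is the $j$-th diagonal entry of $\L$. The entry $\gamma_{j}$ is $q$-Gevrey asymptotic along $-\l q^{\Z}$ to $\hat{g}_{j,j}$, whose constant term is $1$; keeping only the leading term of this asymptotic — legitimate because $q^{-N}z$ stays at a fixed positive distance from $-\l q^{\Z}$ whenever $z\notin-\l q^{\Z}$ — yields $\gamma_{j}(q^{-N}z)\to1$ and $\gamma_{k}(q^{-N}z)\to1$ as $N\to\infty$, for each fixed $q$. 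Consequently the quantity whose limit defines $c_{\L,j,k}^{[\l]}(z,q)$ is asymptotically equal, as $N\to\infty$, to $t_{0}(\hat{g}_{j,k})(q^{-N}z)^{v_{0}(\hat{g}_{j,k})}\,\l_{k,k}(q^{-N}z)/\l_{j,j}(q^{-N}z)$, and everything reduces to controlling this theta quotient.

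I would then compute the quotient by hand. Writing $\l_{j,j}$ as a constant multiple of $\T_{q}(z)^{v_{j}}\L_{q,\tau_{j}}(z)$, with $v_{j}:=v_{0}(1+(q-1)f_{j})$ and $\tau_{j}:=t_{0}(1+(q-1)f_{j})$, and using $\T_{q}(q^{-N}w)=q^{N(N+1)/2}w^{-N}\T_{q}(w)$ together with $\L_{q,\tau}(q^{-N}w)=\tau^{-N}\L_{q,\tau}(w)$, the $N$-dependence of the quotient is a Gaussian factor $q^{(v_{k}-v_{j})N(N+1)/2}$ times a purely geometric factor, the exponent $v_{0}(\hat{g}_{j,k})$ being the one read off from the leading balance in (\ref{eq12}). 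Because $u_{\L,j,j}^{[\l]}$ grows at least as fast as $u_{\L,k,k}^{[\l]}$ as $q^{-N}z\to0$ along the spiral — a consequence of the ordering of valuations in \textbf{(H2)} and of these scaling relations — the Gaussian factor is non-increasing in $N$: when $v_{j}\neq v_{k}$ it decays to $0$ faster than any geometric term, so the limit exists and equals $0$ for every fixed $q$; when $v_{j}=v_{k}$ the Gaussian factor is trivial and convergence reduces to a geometric limit whose ratio is kept off the unit circle by the non-resonance condition (\ref{eq20}) and by the choice $d=\arg(\l)\notin\widetilde{\Sigma}$. This proves the existence of $c_{\L,j,k}^{[\l]}(z,q)$, and since the remaining $z$-dependent prefactors are bounded on compact subsets of $D_{d,a,\e}$, the convergence to $0$ is uniform there. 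An equivalent route to existence, which I would run in parallel as an induction on $k-j$, is to note via Proposition \ref{propo1} that the limit exists precisely when the Jackson integral $\int_{0}^{z}u_{\L,j+1,k}^{[\l]}(t)/u_{\L,j,j}^{[\l]}(qt)\,d_{q}t/t$ converges, which for $k=j+1$ is exactly \textbf{(H5)} fed into Lemma \ref{lem2}, and which propagates to larger $k-j$ using that $\mathbb{B}_{d,a,\e}$ is a ring.

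Finally, to see $\lim_{q\to1}c_{\L,j,k}^{[\l]}(z,q)=0$ conceptually, I would divide the identity of Proposition \ref{propo1} by $u_{\L,j,j}^{[\l]}(z)$, which exhibits $c_{\L,j,k}^{[\l]}(z,q)$ as the difference of $u_{\L,j,k}^{[\l]}(z)/u_{\L,j,j}^{[\l]}(z)$ and the Jackson integral above. By \textbf{(H4)}, \textbf{(H5)} and Lemma \ref{lem2} this difference converges, as $q\to1$, to $\widetilde{u}_{\L,j,k}^{d}(z)/\widetilde{u}_{\L,j,j}^{d}(z)-\int_{0}^{z}\widetilde{u}_{\L,j+1,k}^{d}(t)/\widetilde{u}_{\L,j,j}^{d}(t)\,dt/t$, which is $0$ by the very definition of $\widetilde{U}_{\L}^{d}$. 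The step I expect to be the main obstacle is precisely the interchange of the two limits $N\to\infty$ (defining $c$ at each fixed $q$) and $q\to1$: one needs bounds on $u_{\L,j,k}^{[\l]}(q^{-N}z,q)/u_{\L,j,j}^{[\l]}(q^{-N}z,q)$ that are uniform in both $N$ and $q$ near $1$, and in particular a treatment of the borderline case $v_{j}=v_{k}$, where the Gaussian decay is absent and one must lean entirely on the non-resonance input and on the membership in $\mathbb{B}_{d,a,\e}$ supplied by \textbf{(H5)}.
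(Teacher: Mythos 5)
Your route is genuinely different from the paper's, and it has gaps that matter. The paper's proof never touches theta functions or non-resonance: it works entirely from \textbf{(H5)}. Membership of $\frac{u_{\Lambda,j+1,j+1}^{[\l]}}{z\sq\left(u_{\Lambda,j,j}^{[\l]}\right)}$ in $\mathbb{B}_{d,a,\e}$ gives a bound at the points $q^{-N}z$ that is uniform in $N$ large, in $q$ near $1$ and in $z$ on compacts, so the ratio $\frac{u_{\Lambda,k,k}^{[\l]}(q^{-N}z,q)}{u_{\Lambda,j,j}^{[\l]}(q^{-N+1}z,q)}$ carries an \emph{explicit} factor $q^{-N}z\to 0$; the identity $u_{\Lambda,j,j}^{[\l]}(q^{-N+1}z,q)=\left(1+(q-1)f_{j}(q^{-N}z,q)\right)u_{\Lambda,j,j}^{[\l]}(q^{-N}z,q)$ together with the valuation formula of Remark \ref{rem5} then inserts exactly the compensator $(q^{-N}z)^{v_{0}(\hat{g}_{j,k})}$, and (\ref{eq12}) plus \textbf{(H2)} control $t_{0}\left(\hat{g}_{j,k}\right)$ as $q\to1$ --- a factor whose $q$-dependence your proposal never addresses. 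By contrast, your first two paragraphs contain two substantive errors. First, a general diagonal solution $\L$ of (\ref{eq8}) is an \emph{elliptic} ($\sq$-invariant meromorphic) multiple of the theta solution, not a constant multiple; this is harmless for the fixed-$q$ limit in $N$ (the multiplier is constant along $q^{-N}z$), but it is fatal for the uniform $q\to1$ statement, since $\T_{q}$, $\L_{q,a}$ and these multipliers all degenerate individually as $q\to1$ --- only the combinations constrained by \textbf{(H4)}/\textbf{(H5)} converge, so ``the remaining prefactors are bounded on compact subsets'' is unjustified. Second, in the case $v_{j}=v_{k}$ your appeal to (\ref{eq20}) fails: non-resonance says $t_{0}(1+(q-1)f_{j})\notin q^{\Z}t_{0}(1+(q-1)f_{k})$, a multiplicative $q^{\Z}$-coset condition, which does \emph{not} keep the geometric ratio off the unit circle; a unimodular non-resonant ratio makes your geometric factor oscillate and the limit over $N$ does not exist in your scheme. (The Gaussian factor also needs the cancellation against $(q^{-N}z)^{v_{0}(\hat{g}_{j,k})}$ to be carried out via Remark \ref{rem5} --- a compensator with only geometric decay cannot beat a Gaussian pointing the wrong way, and you assert rather than verify its direction.) The paper needs none of this because the decay comes from the factor $|q^{-N}z|$ extracted from \textbf{(H5)}, not from spectral separation.

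Your third paragraph has a circularity problem within the paper's architecture: to get $\lim_{q\to1}c_{\L,j,k}^{[\l]}=0$ you invoke the convergence of the \emph{off-diagonal} entries $u_{\Lambda,j,k}^{[\l]}\to\widetilde{u}_{\L,j,k}^{d}$, but that is the content of Theorem \ref{theo2}, whose proof uses Lemma \ref{lem7} (via Proposition \ref{propo1}). Similarly, for $k>j+1$ the convergence of the Jackson integral $\int_{0}^{z}\frac{u_{\Lambda,j+1,k}^{[\l]}(t)}{u_{\Lambda,j,j}^{[\l]}(qt)}\frac{d_{q}t}{t}$ requires control of an off-diagonal entry along the spiral, which is not supplied by \textbf{(H5)} (a statement about consecutive diagonal ratios only) but by the inductive bookkeeping inside Theorem \ref{theo2}'s proof. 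A simultaneous induction on $k-j$, proving the lemma and the off-diagonal confluence together, could plausibly repair this, but you do not set it up; as written, the argument assumes what the lemma is needed to prove.
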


\begin{proof}[Proof of Lemma \ref{lem7}]
Let us fix $1\leq j<k\leq m$. Due to the hypothesis \textbf{(H5)}, we have the uniform convergence on the compact subset of $D_{d,a,\e}$ $$\lim\limits_{q \to 1}\lim\limits_{N \to +\infty}\dfrac{u_{\Lambda,k,k}^{[\l]}(q^{-N}z,q)}{u_{\Lambda,j,j}^{[\l]}(q^{-N+1}z,q)}=0.$$ We additionally  use $$\dfrac{u_{\Lambda,k,k}^{[\l]}(q^{-N}z,q)}{u_{\Lambda,j,j}^{[\l]}(q^{-N+1}z,q)}=\dfrac{u_{\Lambda,k,k}^{[\l]}(q^{-N}z,q)}{(1+(q-1)f_{j}(q^{-N}z,q))u_{\Lambda,j,j}^{[\l]}(q^{-N}z,q)},$$ and Remark \ref{rem5}, to deduce that for all $z\in D_{d,a,\e}$, $\lim\limits_{N \to +\infty}\dfrac{(q^{-N}z)^{v_{0}(\hat{g}_{j,k})}u_{\Lambda,k,k}^{[\l]}(q^{-N}z,q)}{u_{\Lambda,j,j}^{[\l]}(q^{-N}z,q)},$ exists and we have the uniform convergence $$\lim\limits_{q \to 1}\lim\limits_{N \to +\infty}\dfrac{(q^{-N}z)^{v_{0}(\hat{g}_{j,k})}u_{\Lambda,k,k}^{[\l]}(q^{-N}z,q)}{u_{\Lambda,j,j}^{[\l]}(q^{-N}z,q)}=0,$$ on the compact subset of $D_{d,a,\e}$. Using (\ref{eq12}) and the fact that \textbf{(H2)} is satisfied, we obtain that the following limit exists $\lim\limits_{q \to 1}t_{0}\left(\hat{g}_{j,k}(z,q)\right)$. This concludes the proof.
\end{proof}

\begin{proof}[Proof of Theorem \ref{theo2}]
Let $1\leq j\leq m$. The uniform convergence in every compact subset of $D_{d,a,\e}$: $$\lim\limits_{q \to 1}u_{\Lambda,j,j}^{[\l]}=\widetilde{u}^{d}_{\L,j,j}$$ is the assumption \textbf{(H4)}. If $m=1$ the proof is complete. Assume that $m\geq 2$. Let us fix ${1\leq j<k\leq m}$. Let us prove that $\widetilde{u}^{d}_{\L,j,k}(z)$ is well defined for $z\in D_{d,a,\e}$, ${\widetilde{u}^{d}_{\L,j,k}\in \mathcal{A}(d-\e,d+\e)}$ and that we have the uniform convergence in every compact subset of $D_{d,a,\e}$: $$\lim\limits_{q \to 1}u_{\Lambda,j,k}^{[\l]}=\widetilde{u}^{d}_{\L,j,k}.$$ 

Because of Lemma \ref{lem7}, we may apply  Proposition \ref{propo1} with the pairs $\{(\nu,k)| j\leq \nu <k\}$. Then, we obtain that $u_{\Lambda,j,k}^{[\l]}(z,q)$ equals 
 
$$ u_{\Lambda,j,k}^{[\l]}(z,q)=c_{\L,j,k}^{[\l]}(z,q)u_{\Lambda,j,j}^{[\l]}(z,q)+u_{\Lambda,j,j}^{[\l]}(z,q)F_{\L,j}^{[\l]}(z,q), $$
where 

$$ 
\begin{array}{ll}
F_{\L,\ell}^{[\l]}(z,q):=\displaystyle\int_{0}^{z} c_{\L,\ell+1,k}^{[\l]}(z,q)\frac{u_{\Lambda,\ell+1,\ell+1}^{[\l]}(t,q)}{u_{\Lambda,\ell,\ell}^{[\l]}(qt,q)}+\frac{u_{\Lambda,\ell+1,\ell+1}^{[\l]}(t,q)}{{u_{\Lambda,\ell,\ell}^{[\l]}(qt,q)}}F_{\L,\ell+1}^{[\l]}(t,q)\frac{d_{q}t}{t}&\hbox{ if } j\leq \ell \leq k-2\\
F_{\L,k-1}^{[\l]}(z,q):=\displaystyle\int_{0}^{z} \frac{u_{\Lambda,k,k}^{[\l]}(t,q)}{{u_{\Lambda,k-1,k-1}^{[\l]}(qt,q)}}\frac{d_{q}t}{t}.&
\end{array}
$$
Let us set, whenever the functions are well defined 
$$ 
\begin{array}{ll}
\widetilde{F}_{\L,\ell}^{d}(z):=\displaystyle\int_{0}^{z} \frac{\widetilde{u}^{d}_{\L,\ell+1,\ell+1}(t)}{\widetilde{u}^{d}_{\L,\ell,\ell}(t)}\widetilde{F}_{\L,\ell+1}^{d}(t)\frac{dt}{t}&\hbox{ if } j\leq \ell \leq k-2\\
\widetilde{F}_{\L,k-1}^{d}(z):=\displaystyle\int_{0}^{z} \frac{\widetilde{u}^{d}_{\L,k,k}(t)}{\widetilde{u}^{d}_{\L,k-1,k-1}(t)}\frac{dt}{t}.&
\end{array}
$$

Because of \textbf{(H4)} and \textbf{(H5)}, we may use Lemma \ref{lem2}, to deduce that\begin{itemize}
\item $F_{\L,k-1}^{[\l]}\in \mathbb{B}_{d,a,\e}$,
\item $\widetilde{F}_{\L,k-1}^{d}(z)$ is well defined for $z\in D_{d,a,\e}$ and  $\widetilde{F}_{\L,k-1}^{d}\in\mathcal{A}(d-\e,d+\e)$,
\item we have the uniform convergence in every compact subset of $D_{d,a,\e}$, $$\lim\limits_{q \to 1}F_{\L,k-1}^{[\l]}(z,q)=\widetilde{F}_{\L,k-1}^{d}(z).$$
\end{itemize} 

We now use \textbf{(H4)} and Lemma \ref{lem7}, to deduce that for all $j\leq \ell \leq k-2$, we have the  uniform convergence in every compact subset of~$D_{d,a,\e}$
$$\lim\limits_{q \to 1}c_{\L,\ell+1,k}^{[\l]}(z,q)\frac{u_{\Lambda,\ell+1,\ell+1}^{[\l]}(z,q)}{u_{\Lambda,\ell,\ell}^{[\l]}(qz,q)}=0$$
$$\lim\limits_{q \to 1}\frac{u_{\Lambda,\ell+1,\ell+1}^{[\l]}(z,q)}{zu_{\Lambda,\ell,\ell}^{[\l]}(qz,q)}=\frac{\widetilde{u}^{d}_{\L,\ell+1,\ell+1}(z)}{z\widetilde{u}^{d}_{\L,\ell,\ell}(z)}.$$

We use the above limits, whose functions involved in the left hand side are elements of $\mathbb{B}_{d,a,\e}$ (see \textbf{(H5)} and Lemma \ref{lem7}), and the fact that $\mathbb{B}_{d,a,\e}$ is a ring (Remark~\ref{rem4}), to apply  Lemma \ref{lem2} and deduce that if, $j<\ell < k$, and
\begin{itemize}
\item $F_{\L,\ell}^{[\l]}\in \mathbb{B}_{d,a,\e}$,
\item $\widetilde{F}_{\L,\ell}^{d}(z)$ is well defined for $z\in D_{d,a,\e}$ and  $\widetilde{F}_{\L,\ell}^{d}\in\mathcal{A}(d-\e,d+\e)$,
\item we have the uniform convergence in every compact subset of $D_{d,a,\e}$, $$\lim\limits_{q \to 1}F_{\L,\ell}^{[\l]}(z,q)=\widetilde{F}_{\L,\ell}^{d}(z),$$
\end{itemize}

  then 
\begin{itemize}
 \item  $F_{\L,\ell-1}^{[\l]}\in \mathbb{B}_{d,a,\e}$, 
 \item $\widetilde{F}_{\L,\ell-1}^{d}(z)$ is well defined for $z\in D_{d,a,\e}$ and  $\widetilde{F}_{\L,\ell-1}^{d}\in\mathcal{A}(d-\e,d+\e)$,
 \item we have the uniform convergence in every compact subset of $D_{d,a,\e}$ $$\lim\limits_{q \to 1}F_{\L,\ell-1}^{[\l]}(z,q)=\widetilde{F}_{\L,\ell-1}^{d}(z).$$ 

\end{itemize}
From what precede, $\widetilde{u}^{d}_{\L,j,k}(z)$ is well defined for $z\in D_{d,a,\e}$,  $\widetilde{u}^{d}_{\L,j,k}\in\mathcal{A}(d-\e,d+\e)$, and we have the uniform convergence in every compact subset of $D_{d,a,\e}$: $$\lim\limits_{q \to 1}u_{\Lambda,j,k}^{[\l]}=\widetilde{u}^{d}_{\L,j,k}.$$ 
\end{proof}

We finish this subsection by making some comparison between Theorem~\ref{theo2}, and two confluence results of the same nature.
\pagebreak[3]
\begin{rem}\label{rem1}
 We are now going to state \cite{DVZ}, Corollary 2.9, which is the particular case of \cite{DVZ}, Theorem~2.6, where the coefficients of the family of linear $q$-difference equations do not depend upon $q$. Let~$p=1/q$ and let~$\delta_{p}:=\frac{\sq^{-1}-\mathrm{Id}}{p-1}$, which converges formally to~$\d$ when~$p\to 1$. Let~${z\mapsto \hat{h}(z,p)\in \C\{z\}}$ that converges coefficientwise to~${\widetilde{h}(z)\in \C[[z]]}$ when~$p\to 1$. Assume the existence of~$b_{0},\dots,b_{m}\in \C[z]$, such that for all~$p$ close to~$1$, we have
$$
\left \{ \begin{array}{lll} 
b_{m}(z)\delta_{p}^{m}\hat{h}(z,p)+\dots+b_{0}(z)\hat{h}(z,p)&=&0\\\\
b_{m}(z)\d^{m}\widetilde{h}(z)+\dots+b_{0}(z)\widetilde{h}(z)&=&0.
\end{array}\right.$$
Moreover, assume that the series~$\hat{\mathcal{B}}_{1}\left(\widetilde{h}\right)$ belongs to~$\C\{z\}$ and is solution of a linear differential equation which is Fuchsian at~$0$ and infinity and has non resonant exponents at~$\infty$.\par 
 The authors of \cite{DVZ} conclude that for all convenient $d\in \R$,  $$\lim\limits_{p \to 1}\hat{h}(z,p)=\widetilde{S}^{d}\left(\widetilde{h}\right)(z),$$
uniformly on the compacts of~$\overline{S}\left(d-\frac{\pi}{2},d+\frac{\pi}{2}\right)$, where~$\widetilde{S}^{d}\left(\widetilde{h}\right)$ is the asymptotic solution of the linear differential equation that has been defined in $\S \ref{sec12}$.\end{rem}

\pagebreak[3]
\begin{rem}\label{rem2}
Let us now state \cite{D3}, Theorem 4.5, in the case where the coefficients of the family of linear $q$-difference equations do not depend upon $q$.
Let~${z\mapsto \hat{h}(z,q)\in \C[[z]]}$ that converges coefficientwise to~${\widetilde{h}(z)\in \C[[z]]}$ when~$q\to 1$. Assume the existence of polynomials~${b_{0},\dots,b_{m}\in \C[z]}$, such that for all~$q$ close to~$1$, we have
$$
\left \{ \begin{array}{lll} 
b_{m}(z)\delta_{q}^{m}\hat{h}(z,q)+\dots+b_{0}(z)\hat{h}(z,q)&=&0\\\\
b_{m}(z)\d^{m}\widetilde{h}(z)+\dots+b_{0}(z)\widetilde{h}(z)&=&0.
\end{array}\right.$$
In \cite{D3}, we prove that for all convenient~$d\in \R$, for all $q$ close to $1$, we may apply a $q$-deformation of the Borel-Laplace summation to $\hat{h}(z,q)$, to obtain ${z\mapsto S^{[d]}\left(\hat{h} \right)(z,q)\in \mathcal{M}(\C^{*})}$, solution of the same family of $\dq$-equations as $\hat{h}$. Moreover, we have $$\lim\limits_{q \to 1}S^{[d]}\left(\hat{h} \right)(z,q)=\widetilde{S}^{d}\left(\widetilde{h}\right)(z),$$
uniformly on the compacts of~$\overline{S}\left(d-\frac{\pi}{2\e},d+\frac{\pi}{2\e}\right)$, for some $\e>0$. Note that in general, $S^{[d]}\left(\hat{h} \right)$ has no link with the meromorphic solutions appearing in $\S \ref{sec22}$. 
\end{rem}

\pagebreak[3]
\subsection{An example.}\label{sec43}
In $\S \ref{sec42}$ we will explain how to apply Theorem \ref{theo2}. The goal of this section is to give an idea of the method we are going to use on an example. Since everything will be proved in $\S \ref{sec42}$ in full generalities, the proofs in this subsection will be omitted. \par 
Consider 
$$\widetilde{\D}:=\left(\d +2z^{-2}\right)\left(\d +z^{-1}\right)\d.$$
We have a formal solution given by the method of the constant variation:
$$\widetilde{U}(z):=\begin{pmatrix}
\exp(z^{-2})&\exp(z^{-2})\displaystyle\int_{0}^{z} \frac{\exp(t^{-1})}{\exp(t^{-2})}\frac{dt}{t}&\exp(z^{-2})\displaystyle\int_{t=0}^{z}\displaystyle\int_{u=0}^{t}\frac{\exp(t^{-1})}{\exp(t^{-2})} \frac{1}{\exp(u^{-1})}\frac{dt}{t}\frac{du}{u}\\
0&\exp(z^{-1})&\exp(z^{-1})\displaystyle\int_{0}^{z} \frac{1}{\exp(t^{-1})}\frac{dt}{t}\\
0&0&1
\end{pmatrix}.$$
We want to give an analytic meaning of $\widetilde{U}(z)$. 
The entries of the matrix $\widetilde{U}$ are analytic if the integrals converge. The integrals converge if and only if $\exp(\e z^{-1})$ and $\exp(\e z^{-2})$ tend to $0$ when $\e\to 0^{+}$. Therefore, the entries of $\widetilde{U}$ are analytic on the sector $\overline{S}\left(\frac{\pi}{2},\frac{3\pi}{4}\right)$ and on the sector $\overline{S}\left(\frac{5\pi}{4},\frac{3\pi}{2}\right)$. For the simplicity of the exposition, let us drop the second sector.\begin{figure}[ht]
\begin{center}
$\begin{tabular}{ccc}
\includegraphics[width=0.3\linewidth]{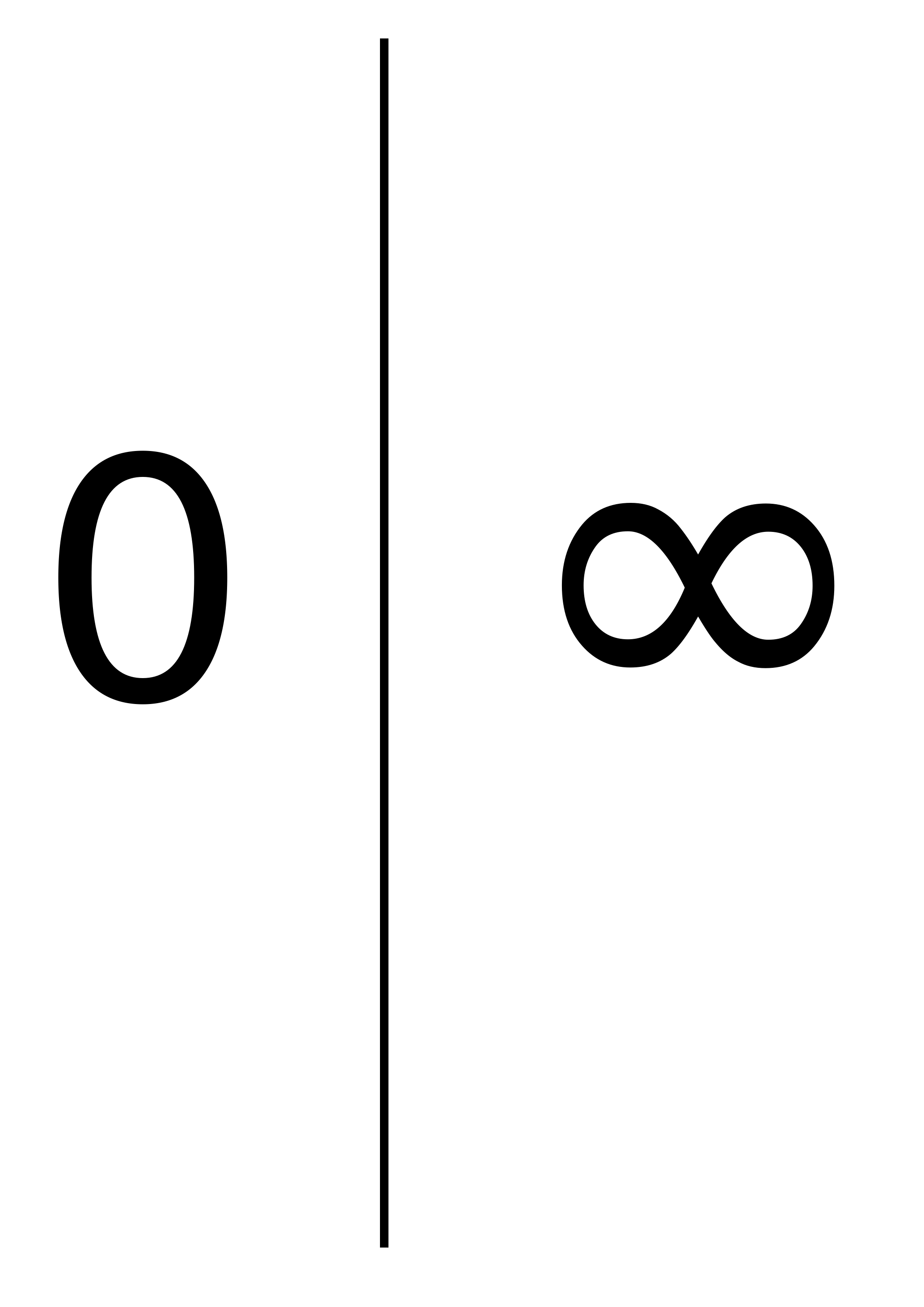}&\includegraphics[width=0.3\linewidth]{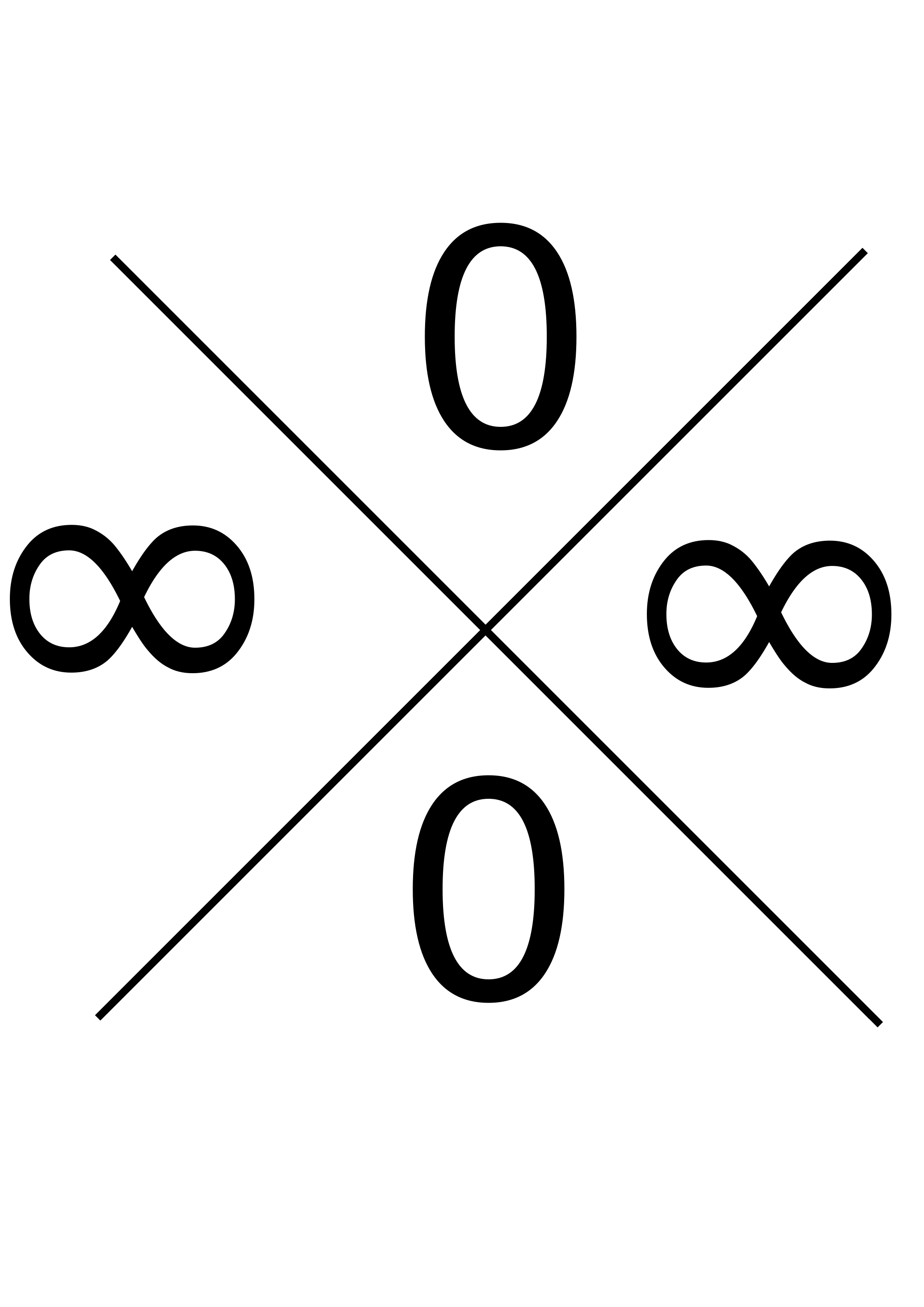}&\includegraphics[width=0.3\linewidth]{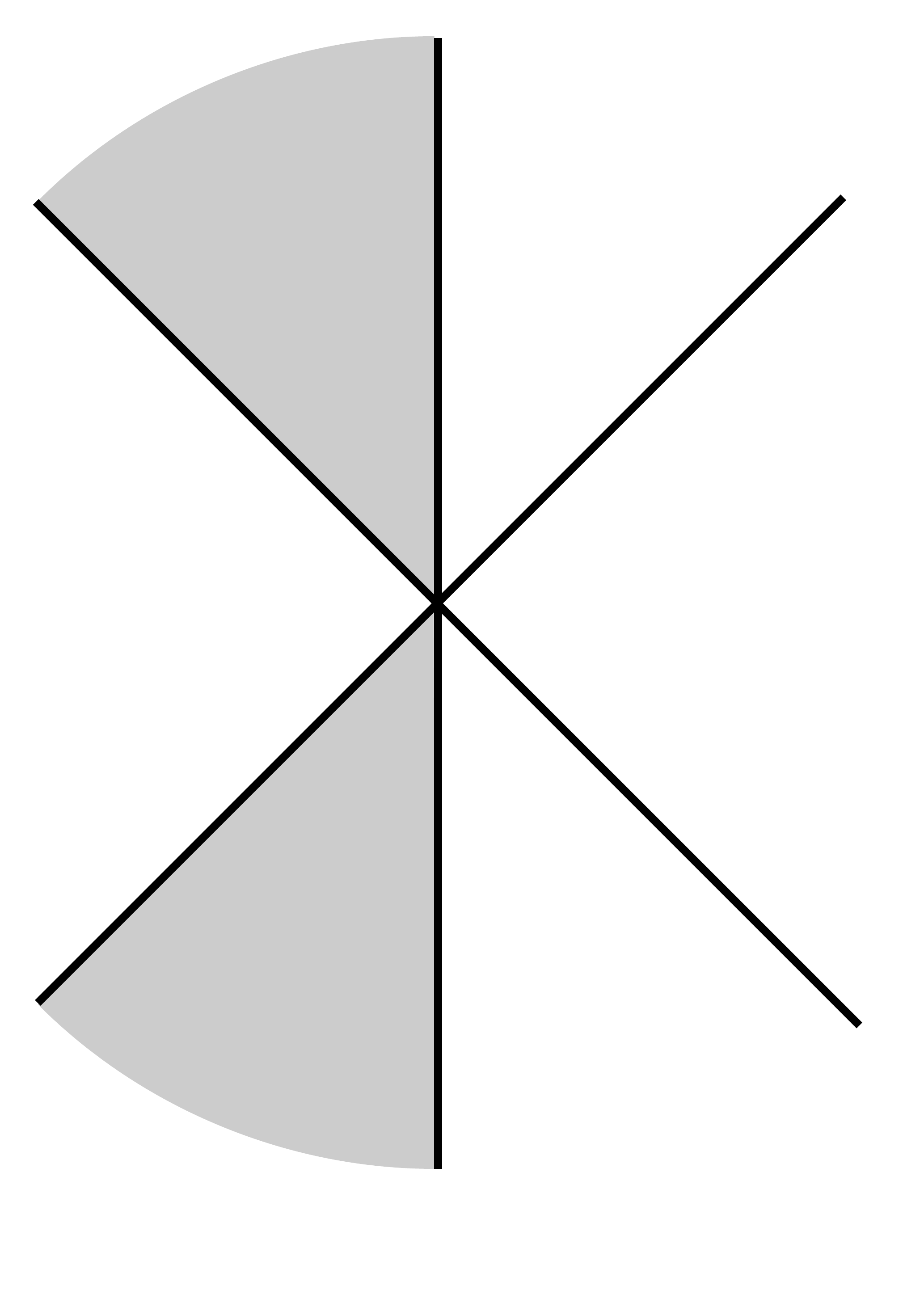}
\end{tabular}$
\caption{Limit of $\exp(\e z^{-1})$ when $\e\to 0^{+}$ (left). Limit of $\exp(\e z^{-2})$ when $\e\to 0^{+}$ (center). Domain of analyticity of the entries of $\widetilde{U}(z)$ (right).}
\end{center}
\end{figure} \par 
Let us now consider 
$$\D_{q}:=\left(\dq +\dfrac{(1+q)q^{-2}z^{-2}}{1+(q-1)(1+q)q^{-2}z^{-2}}\right)\left(\dq +\dfrac{q^{-1}z^{-1}}{1+(q-1)q^{-1}z^{-1}}\right)\dq.$$
Note that $\D_{q}$ is a $q$-deformation of  $\widetilde{\D}$.
For convenient $\l\in \C^{*}$, for $z\in \overline{S}\left(\frac{\pi}{2},\frac{3\pi}{4}\right)$, for some diagonal matrix $\Lambda$, we have the existence of $z\mapsto c_{\Lambda,1}^{[\l]}(z,q),c_{\Lambda,2}^{[\l]}(z,q),c_{\Lambda,3}^{[\l]}(z,q)$ meromorphic on $\overline{S}\left(\frac{\pi}{2},\frac{3\pi}{4}\right)$ and invariant under $\sq$, such that $U_{\Lambda}^{[\l]}(z,q)$ equals to
$$\begin{pmatrix}
e_{q^{2}}(z^{-2})&e_{q^{2}}(z^{-2})\displaystyle\int_{0}^{z} \frac{e_{q}(t^{-1})}{e_{q^{2}}(q^{-2}t^{-2})}\frac{d_{q}t}{t}&e_{q^{2}}(z^{-2})\displaystyle\int_{t=0}^{z}\displaystyle\int_{u=0}^{t}\frac{e_{q}(t^{-1})}{e_{q^{2}}(q^{-2}t^{-2})} \frac{1}{e_{q}(q^{-1}u^{-1})}\frac{d_{q}t}{t}\frac{d_{q}u}{u}\\
0&e_{q}(z^{-1})&e_{q}(z^{-1})\displaystyle\int_{0}^{z} \frac{1}{e_{q}(q^{-1}t^{-1})}\frac{d_{q}t}{t}\\
0&0&1
\end{pmatrix}C_{\Lambda}^{[\l]}(z,q),$$
where $C_{\Lambda}^{[\l]}(z,q):=\begin{pmatrix}
1&c_{\Lambda,1}^{[\l]}(z,q)&c_{\Lambda,2}^{[\l]}(z,q)\\
0&1&c_{\Lambda,3}^{[\l]}(z,q)\\
0&0&1 \end{pmatrix}$. One may check that assumptions of Theorem \ref{theo2} are satisfied. Therefore, there exists $a>0$ such that we have uniform convergence
 $$\lim\limits_{q \to 1}U_{\L}^{[\l]}(z,q)=\widetilde{U}(z),$$ 
in every the compact subset of $\left\{z\in \overline{S}\left(\frac{\pi}{2},\frac{3\pi}{4}\right)\Big| |z|<a\right\} $.

\pagebreak[3]
\subsection{$q$-deformation of meromorphic solutions of linear differential equations.}\label{sec42}

Let us consider a linear differential equation in coefficients in $\C(\{z\})$
$$\widetilde{\D}:=\left(\d -\widetilde{f}_{m}(z)\right)\dots\left(\d -\widetilde{f}_{1}(z)\right),$$
with $m\geq 2$, for all $1\leq j\leq m$, ${\widetilde{f}_{j}\in \C((z))}$, ${v_{0}\left( \widetilde{f}_{1}\right)<\dots< v_{0}\left( \widetilde{f}_{m}\right)}$ and $v_{0}\left( \widetilde{f}_{m-1}\right)< 0$. As in $\S \ref{sec1}$, we are going to see the equation as a system $\d \widetilde{Y}(z)=\widetilde{C}(z)\widetilde{Y}(z)$.

\pagebreak[3]
\begin{rem}
Note that the Newton polygon of $\widetilde{\D}$ have $m$ distinct slopes with multiplicities one. Moreover, every linear differential equation
$$ \displaystyle \sum_{\ell=0}^{m-1}\widetilde{a}_{\ell}\d^{\ell},$$
with $m\geq 2$, $\widetilde{a}_{0},\dots,\widetilde{a}_{m-1}\in \C(\{z\})$, and ${v_{0}\left( \widetilde{a}_{m-1}\right)>\dots>v_{0}\left( \widetilde{a}_{1}\right)}$, $v_{0}\left( \widetilde{a}_{1}\right)\leq v_{0}\left( \widetilde{a}_{0}\right)$ is of the wished form.  
\end{rem}

For $1\leq j\leq m$, and $d\in \R$, that is not one of the singular directions of ${\d \widetilde{Y}(z)=\widetilde{C}(z)\widetilde{Y}(z)}$, see Theorem \ref{theo4}, let us chose $\widetilde{u}^{d}_{j,j}$, non zero meromorphic solution of ${\d\widetilde{u}^{d}_{j,j}=\widetilde{S}^{d}\left(\widetilde{f}_{j}\right)\widetilde{u}^{d}_{j,j}}$. Let us introduce, whether it is defined, ${\left(\widetilde{u}^{d}_{j,k}(z)\right):=\widetilde{U}^{d}(z)}$, upper triangular matrix with, 
 for all $1\leq j<k\leq m$,
$$\widetilde{u}^{d}_{j,k}(z):=\widetilde{u}^{d}_{j,j}(z)\displaystyle\int_{0}^{z} \frac{\widetilde{u}^{d}_{j+1,k}(t)}{\widetilde{u}^{d}_{j,j}(t)}\frac{dt}{t}.$$
We may check that if it is defined, $\widetilde{U}^{d}(z)$ is a meromorphic fundamental solution for $\d \widetilde{Y}(z)=\widetilde{C}(z)\widetilde{Y}(z)$. 
 The goal of this subsection is to 
 \begin{itemize}
\item  prove that $\widetilde{U}^{d}(z)$ is well defined for some $d\in \R$, and some $z\in \C^{*}$,
\item  construct a family of linear $q$-difference equations $\D_{q}$ that discretizes $\widetilde{\D}$,
\item build a family of meromorphic fundamental solution for the family of systems attached to $\D_{q}$, that is, for $q$ close to $1$ fixed, one of the meromorphic fundamental solution defined in $\S \ref{sec22}$, and that converges to $\widetilde{U}^{d}(z)$ when $q$ goes to $1$.
 \end{itemize}

\pagebreak[3]
\begin{center}
\textbf{Step 1: Domain of definition of $\widetilde{U}^{d}(z)$.}
\end{center} 

\begin{lem}\label{lem6}
Let us write  ${\displaystyle \sum_{\ell=\mu_{j}}^{\infty}\widetilde{f}_{j,\ell}z^{\ell}:=\widetilde{f}_{j}}$, with $\widetilde{f}_{j,\mu_{j}}\neq 0$. There exist $\e>0$, $d\in \R$, such that for all ${z\in \overline{S}(d-\e,d+\e)}$, and for all~${1\leq j<m}$,
$$\lim\limits_{\substack{ x \to 0^{+}\\ x\in \R_{>0}}}\left|\frac{\exp\left(\widetilde{f}_{j+1,0}\log(xz)\right)\displaystyle\prod_{\ell=\mu_{j+1}}^{-1}\exp\left(-\ell^{-1}\widetilde{f}_{j+1,\ell}(xz)^{\ell}\right)}{\exp\left(\left(\widetilde{f}_{j,0}+1\right)\log(xz)\right)\displaystyle\prod_{\ell=\mu_{j}}^{-1}\exp\left(-\ell^{-1}\widetilde{f}_{j,\ell}(xz)^{\ell}\right)}\right|=0.$$
\end{lem}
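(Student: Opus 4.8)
The plan is to reduce the claimed limit to the asymptotic behaviour of a single dominant exponential, and then to a condition bearing only on the direction $d$. First I would rewrite the modulus of the quotient as $\exp(\Phi(x))$, where, using $\widetilde{f}_{j}=\sum_{\ell\geq\mu_{j}}\widetilde{f}_{j,\ell}z^{\ell}$,
\[
\Phi(x)=\mathrm{Re}\!\left((\widetilde{f}_{j+1,0}-\widetilde{f}_{j,0}-1)\log(xz)-\sum_{\ell=\mu_{j+1}}^{-1}\frac{\widetilde{f}_{j+1,\ell}}{\ell}(xz)^{\ell}+\sum_{\ell=\mu_{j}}^{-1}\frac{\widetilde{f}_{j,\ell}}{\ell}(xz)^{\ell}\right).
\]
The statement is then equivalent to $\Phi(x)\to-\infty$ as $x\to0^{+}$.

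Next I would extract the dominant term. Since $x>0$ and every index $\ell$ occurring in the two sums is negative, each monomial $(xz)^{\ell}=z^{\ell}x^{\ell}$ grows like $x^{\ell}\to+\infty$, faster the more negative $\ell$ is, while the logarithmic term only contributes $O(\log x)$. As the valuations are strictly increasing, $\mu_{j}<\mu_{j+1}$, the most negative index present is $\ell=\mu_{j}$, and it occurs only in the last sum; hence
\[
\Phi(x)=\mathrm{Re}\!\left(\frac{\widetilde{f}_{j,\mu_{j}}}{\mu_{j}}z^{\mu_{j}}\right)x^{\mu_{j}}+o\!\left(x^{\mu_{j}}\right),\qquad x\to0^{+}.
\]
Because $\widetilde{f}_{j,\mu_{j}}\neq0$ the coefficient is a nonzero complex number, and since $x^{\mu_{j}}\to+\infty$ the limit is $0$ precisely when $\mathrm{Re}\big(\tfrac{\widetilde{f}_{j,\mu_{j}}}{\mu_{j}}z^{\mu_{j}}\big)<0$. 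Writing $z=|z|e^{id}$ and $\tfrac{\widetilde{f}_{j,\mu_{j}}}{\mu_{j}}=\rho_{j}e^{i\gamma_{j}}$ with $\rho_{j}>0$, this real part equals $\rho_{j}|z|^{\mu_{j}}\cos(\gamma_{j}+\mu_{j}d)$, so its sign depends on $d$ alone, and the favourable directions for the index $j$ form the open set $G_{j}:=\{\,d:\cos(\gamma_{j}+\mu_{j}d)<0\,\}$.

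It then remains to pick a direction $d\in\bigcap_{j=1}^{m-1}G_{j}$ and, by openness, a half-width $\varepsilon>0$ such that $\overline{S}(d-\varepsilon,d+\varepsilon)$ is still contained in every $G_{j}$; for such $(d,\varepsilon)$ the asymptotics above hold for every $z$ on the sector and the lemma follows. I expect this last step to be the main obstacle: each $G_{j}$ is a union of arcs dictated by a different slope $\mu_{j}$, so the non-emptiness of the intersection is genuinely a statement about the configuration of the slopes $\mu_{1}<\dots<\mu_{m-1}<0$, and it cannot be settled by a crude measure count (the bad sets have total measure $(m-1)\pi$, which exceeds $2\pi$ once $m\geq3$). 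I would try to secure a common admissible $d$ by working inside a sector where the most singular exponential ($\ell=\mu_{1}$) decays and then tracking how the successive conditions $G_{2},\dots,G_{m-1}$ cut it down, isolating precisely the feature of the slope data that keeps the surviving sector from shrinking below the width $\pi/|\mu_{j+1}|$ of the next forbidden arc; controlling these arc lengths is the delicate heart of the argument.
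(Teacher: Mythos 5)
Your analytic reduction is correct and coincides exactly with the paper's first step: since $\mu_{1}<\dots<\mu_{m}$, for the pair $(j,j+1)$ the monomial with $\ell=\mu_{j}$ dominates, the logarithmic factor contributes $O(\log x)$, and the lemma reduces to finding $d$ (then $\e>0$ by openness) with $\mathrm{Re}\left(\mu_{j}^{-1}\widetilde{f}_{j,\mu_{j}}z^{\mu_{j}}\right)<0$ for all $1\leq j<m$, i.e. $d\in\bigcap_{j=1}^{m-1}G_{j}$, where each $G_{j}$ is a periodic union of $|\mu_{j}|$ open arcs of width $\pi/|\mu_{j}|$ separated by gaps of the same width. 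But that is where the actual content of the lemma begins, and you leave it unproved: you yourself flag the non-emptiness of $\bigcap_{j}G_{j}$ as ``the delicate heart'' and offer only a heuristic. Your correct observation that the complements have total measure $(m-1)\pi>2\pi$ for $m\geq 3$ shows measure counting fails; and the strategy you sketch --- start from a sector where the most singular exponential ($\ell=\mu_{1}$) decays and cut it down by $G_{2},\dots,G_{m-1}$ --- does not work as stated. Indeed a good arc of $G_{1}$ has width $\pi/|\mu_{1}|$, while a \emph{bad} arc of $G_{j}$ for $j\geq 2$ has the strictly larger width $\pi/|\mu_{j}|$ (since $|\mu_{1}|>|\mu_{j}|$), so any fixed arc of $G_{1}$ can be swallowed whole by a forbidden arc of some later $G_{j}$: which of the $|\mu_{1}|$ arcs of $G_{1}$ one must start from is a global arithmetic question about all the slopes at once, not something local tracking can resolve.

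The paper settles precisely this point by an arithmetic argument absent from your proposal. Set $K:=\mu_{1}\times\dots\times\mu_{m-1}$ and discretize directions to the progression $\frac{\ell\pi}{K}+\e_{0}$, $\ell\in\Z$, with $\e_{0}>0$ chosen so that no such point lies on the boundary of any arc. Each $G_{j}$ then becomes a set $\Sigma_{j}\subset\Z$ which is a union of blocks of $K/\mu_{j}$ consecutive integers separated by gaps of the same length, hence is determined modulo $\frac{2K}{\mu_{j}}$. Writing $2K$ as a product of prime powers and adjusting representatives $\ell_{j}\in\Sigma_{j}$ prime by prime (via the $p$-adic valuations of the $K/\mu_{j}$, exploiting at each stage the block structure of $\Sigma_{j}$ to shift $\ell_{j}$ by multiples of $\prod_{i}p_{i}^{v_{p_{i}}(K/\mu_{j})}$ without leaving $\Sigma_{j}$), the paper makes the $\ell_{j}$ pairwise compatible and concludes by the Chinese remainder theorem that there exists $n_{0}\in\Z$ with $n_{0}\equiv\ell_{j} \bmod \frac{2K}{\mu_{j}}\Z$ for all $j$, so that $\frac{n_{0}\pi}{K}+\e_{0}\in\bigcap_{j=1}^{m-1}G_{j}$. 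Without this simultaneous-congruence construction (or an equivalent one) your argument proves only the easy reduction, not the lemma; completing your proposal would require supplying exactly this step.
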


\begin{proof}
Since ${v_{0}\left( \widetilde{f}_{1}\right)<\dots< v_{0}\left( \widetilde{f}_{m}\right)\leq 0}$, it is equivalent to prove the existence of $\e>0$, $d\in \R$, such that for all ${z\in \overline{S}(d-\e,d+\e)}$, such that for all ${1\leq j< m}$,
$$\lim\limits_{\substack{ x \to 0^{+}\\ x\in \R_{>0}}}\left|\exp\left(-\mu_{j}^{-1}\widetilde{f}_{j,\mu_{j}}(xz)^{\mu_{j}}\right)\right|=\infty.$$
For all ${1\leq j< m}$, $\lim\limits_{\substack{ x \to 0^{+}\\ x\in \R_{>0}}}\left|\exp\left(-\mu_{j}^{-1}\widetilde{f}_{j,\mu_{j}}(xz)^{\mu_{j}}\right)\right|=\infty$ if and only if $\arg(z)$ belongs to the open set $\displaystyle\bigcup_{k=-\infty}^{\infty}\left]\arg\left(-\widetilde{f}_{j,\mu_{j}}\right)+\frac{2k\pi}{\mu_{j}}-\frac{\pi}{2\mu_{j}},\arg\left(-\widetilde{f}_{j,\mu_{j}}\right)+\frac{2k\pi}{\mu_{j}}+\frac{\pi}{2\mu_{j}}\right[$.
Let ${K:=\mu_{1}\times\dots\times\mu_{m-1}}$. Let $\e_{0}>0$ such that for all $\ell\in \Z$, $\frac{\ell\pi}{ K}+\e_{0}$ does not belong to the border of $$\displaystyle \bigcap_{j=1}^{m-1}\left(\bigcup_{k=0}^{\mu_{j}-1}\left]\arg\left(-\widetilde{f}_{j,\mu_{j}}\right)+\frac{2k\pi}{\mu_{j}}-\frac{\pi}{2\mu_{j}},\arg\left(-\widetilde{f}_{j,\mu_{j}}\right)+\frac{2k\pi}{\mu_{j}}+\frac{\pi}{2\mu_{j}}\right[\right) . $$

Then, to prove the lemma, it is sufficient to prove the existence of $\ell\in \Z$, such that 
$$\frac{\ell\pi}{ K}+\e_{0}\in\displaystyle \bigcap_{j=1}^{m-1}\left(\bigcup_{k=0}^{\mu_{j}-1}\left]\arg\left(-\widetilde{f}_{j,\mu_{j}}\right)+\frac{2k\pi}{\mu_{j}}-\frac{\pi}{2\mu_{j}},\arg\left(-\widetilde{f}_{j,\mu_{j}}\right)+\frac{2k\pi}{\mu_{j}}+\frac{\pi}{2\mu_{j}}\right[\right) . $$
If $m=2$, the proof is completed. Assume that $m>2$. For ${1\leq j< m}$, let $\Sigma_{j}$, be the set of integers $\ell$ in $\Z$, such that 
$$\frac{\ell\pi}{ K}+\e_{0}\in \bigcup_{k=0}^{\mu_{j}-1}\left]\arg\left(-\widetilde{f}_{j,\mu_{j}}\right)+\frac{2k\pi}{\mu_{j}}-\frac{\pi}{2\mu_{j}},\arg\left(-\widetilde{f}_{j,\mu_{j}}\right)+\frac{2k\pi}{\mu_{j}}+\frac{\pi}{2\mu_{j}}\right[.$$
 Let us write $2K=2\times2^{a_{0}}\times p_{1}^{a_{1}}\times\dots \times p_{r}^{a_{r}}$, the decomposition of $2K$ in product of prime numbers. For $p$ prime number, let $v_{p}$ be $p$-adic valuation.\par 
  We claim that for ${1\leq j< m}$, there exists $\ell_{j}\in \Sigma_{j}$, such that for all $\nu \in \{0,\dots,r\}$, for all ${1\leq j<k< m}$, the projections of $\ell_{j} \mod p_{\nu}^{v_{p_{\nu}}(\frac{K}{\mu_{j}})}$ and $\ell_{k} \mod p_{\nu}^{v_{p_{\nu}}(\frac{K}{\mu_{k}})}$ on $\Z/p_{\nu}^{a_{\nu}}\Z$ are equal. For ${1\leq j< m}$ let us fix an arbitrary $\ell_{j}\in \Sigma_{j}$.  Let $\nu:=0$ and $p_{0}:=2$. We do the following operation. Let $1\leq j_{\nu} <m$, such that $v_{p_{\nu}}\left(\frac{K}{\mu_{j_{\nu}}}\right)=\displaystyle \max_{1\leq j <m}\left(v_{p_{\nu}}\left(\frac{K}{\mu_{j}}\right) \right)$. By construction, $\Sigma_{j}$ is an union successive sets of consecutive $\frac{K}{\mu_{j}}$ integers separated by $\frac{K}{\mu_{j}}$ consecutive integers. Then, for all $1\leq j <m$ with $j\neq j_{\nu}$, there exists $\kappa_{j,\nu}\in \Z$, such that $\ell_{j}':=\ell_{j}+\kappa_{j,\nu} \displaystyle \prod_{i=0}^{\nu-1}p_{i}^{v_{p_{i}}(\frac{K}{\mu_{j}})}\in  \Sigma_{j}$, and such that for all ${1\leq j<k< m}$, for all $0\leq i \leq \nu$, the projections of $\ell_{j}' \mod p_{i}^{v_{p_{i}}(\frac{K}{\mu_{j}})}$ and $\ell_{k}' \mod p_{i}^{v_{p_{i}}(\frac{K}{\mu_{k}})}$ on $\Z/p_{i}^{a_{i}}\Z$ are equal. It follows that we may replace $\ell_{j}$ by $\ell_{j}'$, and reduce to the case where for all ${1\leq j<k< m}$, for all $0\leq i \leq \nu$, the projections of $\ell_{j} \mod p_{i}^{v_{p_{i}}(\frac{K}{\mu_{j}})}$ and $\ell_{k} \mod p_{i}^{v_{p_{i}}(\frac{K}{\mu_{k}})}$ on $\Z/p_{i}^{a_{i}}\Z$ are equal. We do the same operation for $\nu=1,\dots,\nu=r$. This proves our claim.\par 
Let $1\leq j_{-1} <m$, such that $v_{2}\left(\frac{K}{\mu_{j_{-1}}}\right)=\displaystyle \max_{1\leq j <m}\left(v_{2}\left(\frac{K}{\mu_{j}}\right) \right)$. We remind that $\Sigma_{j}$ is an union successive sets of consecutive $\frac{K}{\mu_{j}}$ integers separated by $\frac{K}{\mu_{j}}$ consecutive integers. For all $1\leq j <m$ with $j\neq j_{-1}$, we again replace $\ell_{j}$ by $\ell_{j}+\kappa_{j} \frac{2K}{\mu_{j}}\in  \Sigma_{j}$ for some $\kappa_{j}\in \Z$, to reduce to the case where for all ${1\leq j<k< m}$, for $\nu \in \{1,\dots,r\}$ (resp. for $\nu=0$), the projections of $\ell_{j} \mod p_{\nu}^{v_{p_{\nu}}(\frac{K}{\mu_{j}})}$ and $\ell_{k} \mod p_{\nu}^{v_{p_{\nu}}(\frac{K}{\mu_{k}})}$ on $\Z/p_{\nu}^{a_{\nu}}\Z$ (resp. on $\Z/2^{a_{0}+1}\Z$), are equal.\par 
Due to the Chinese reminder theorem, there exists an integer $0\leq n_{0}<2K$, such that for all ${1\leq j< m}$, $$n_{0}\equiv \ell_{j} \mod \frac{2K}{\mu_{j}}\Z.$$ Then, we have 
$$\frac{n_{0}\pi}{K}+\e_{0}\in\displaystyle \bigcap_{j=1}^{m-1}\left(\bigcup_{k=0}^{\mu_{j}-1}\left]\arg\left(-\widetilde{f}_{j,\mu_{j}}\right)+\frac{2k\pi}{\mu_{j}}-\frac{\pi}{2\mu_{j}},\arg\left(-\widetilde{f}_{j,\mu_{j}}\right)+\frac{2k\pi}{\mu_{j}}+\frac{\pi}{2\mu_{j}}\right[\right) . $$
This concludes the proof.
\end{proof}

Without loss of generalities, we may assume that the real number $d\in \R$ of Lemma~\ref{lem6} is not one of the singular directions of $\d \widetilde{Y}(z)=\widetilde{C}(z)\widetilde{Y}(z)$ defined in Theorem \ref{theo4}. It follows that: 
\pagebreak[3]
\begin{coro}
There exist $d\in \R$, and $\e>0$, such that $$\widetilde{U}^{d}(z)\in \mathrm{GL}_{m}(\mathcal{A}(d-\e,d+\e )).$$
\end{coro}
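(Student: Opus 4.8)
The plan is to exploit that $\widetilde{U}^{d}$ is upper triangular: it suffices to show that each diagonal entry $\widetilde{u}^{d}_{j,j}$ is an analytic unit on a common sector $\overline{S}(d-\e,d+\e)$ near $0$, and that each integral defining an entry above the diagonal converges to an element of $\mathcal{A}(d-\e,d+\e)$. Invertibility is then automatic, the determinant $\prod_{j=1}^{m}\widetilde{u}^{d}_{j,j}$ being a nowhere-vanishing element of $\mathcal{A}(d-\e,d+\e)$.

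First I would analyse the diagonal. As $d$ is not a singular direction, Theorem \ref{theo4} yields $\widetilde{S}^{d}(\widetilde{f}_{j})\in\mathcal{A}(d-\frac{\pi}{2\kappa},d+\frac{\pi}{2\kappa})$, with $\kappa$ the maximal level; a non-zero solution of $\d\widetilde{u}^{d}_{j,j}=\widetilde{S}^{d}(\widetilde{f}_{j})\widetilde{u}^{d}_{j,j}$ is $\widetilde{u}^{d}_{j,j}=\exp\big(\int\widetilde{S}^{d}(\widetilde{f}_{j})\frac{dz}{z}\big)$. Writing $\widetilde{f}_{j}$ as the sum of its polar part $\sum_{\mu_{j}\le\ell<0}\widetilde{f}_{j,\ell}z^{\ell}$ (a Laurent polynomial, equal to its own sum under $\widetilde{S}^{d}$) and a power series, this exponential factors as an essential singularity $\exp\big(\sum_{\mu_{j}\le\ell<0}\ell^{-1}\widetilde{f}_{j,\ell}z^{\ell}\big)$, times $z^{\widetilde{f}_{j,0}}$, times an analytic germ that is nonzero at $0$. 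Each factor is analytic and nowhere zero on the punctured sector, so $\widetilde{u}^{d}_{j,j}$ and $1/\widetilde{u}^{d}_{j,j}$ belong to $\mathcal{A}(d-\e,d+\e)$ as soon as $\e<\frac{\pi}{2\kappa}$.

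Then I would show by induction on $k-j$ that $\widetilde{u}^{d}_{j,k}$ is well defined and lies in $\mathcal{A}(d-\e,d+\e)$, the case $k=j$ being settled above. For the step, the induction hypothesis makes $\widetilde{u}^{d}_{j+1,k}$ analytic on the sector, with dominant exponential factor that of $\widetilde{u}^{d}_{j+1,j+1}$, the integral factors contributing only subexponentially. Hence the modulus of the integrand $\frac{\widetilde{u}^{d}_{j+1,k}(t)}{\widetilde{u}^{d}_{j,j}(t)}\frac1t$ is, up to analytic units and subexponential factors, governed by the ratio appearing in Lemma \ref{lem6}. For the direction $d$ and aperture $\e$ furnished by that lemma this ratio tends to $0$ along every ray of the sector, at the exponential rate set by the leading term $\widetilde{f}_{j,\mu_{j}}z^{\mu_{j}}$ with $\mu_{j}<0$; this decay overrides the factor $1/t$ together with any power or logarithm, so the integral $\int_{0}^{z}\frac{\widetilde{u}^{d}_{j+1,k}(t)}{\widetilde{u}^{d}_{j,j}(t)}\frac{dt}{t}$ converges absolutely and uniformly on compact subsets of $\overline{S}(d-\e,d+\e)\cap\{0<|z|<a\}$, defining a function in $\mathcal{A}(d-\e,d+\e)$. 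Multiplying by $\widetilde{u}^{d}_{j,j}$ completes the step.

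The main obstacle is the inductive control of the off-diagonal factors: after several integrations one must be sure that the accumulated polynomial and logarithmic growth of $\widetilde{u}^{d}_{j+1,k}/\widetilde{u}^{d}_{j+1,j+1}$ never catches up with the exponential separation between consecutive diagonal entries. This is exactly where the strict chain $v_{0}(\widetilde{f}_{1})<\dots<v_{0}(\widetilde{f}_{m})$ together with $v_{0}(\widetilde{f}_{m-1})<0$ is used: it produces a genuine exponential gap at every consecutive pair $(j,j+1)$ with $j<m$, rendering the subexponential factors harmless. The one remaining subtlety, namely that a single direction $d$ (chosen off the singular directions of Theorem \ref{theo4}) and a single aperture $\e$ serve all $m-1$ pairs at once, is precisely what Lemma \ref{lem6} secures via its Chinese remainder argument; shrinking $\e$ below $\frac{\pi}{2\kappa}$ if needed makes it compatible with the summations $\widetilde{S}^{d}(\widetilde{f}_{j})$ as well.
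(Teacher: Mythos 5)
Your proof is correct and follows essentially the same route as the paper: the paper derives the corollary directly from Lemma \ref{lem6} after choosing $d$ off the singular directions of Theorem \ref{theo4}, which is exactly your combination of the explicit form of the diagonal entries, the induction on $k-j$ for the iterated integrals, and the simultaneous choice of $(d,\e)$ for all consecutive pairs via the Chinese remainder argument. You merely make explicit the convergence details that the paper compresses into ``It follows that'' (and your sign $+\ell^{-1}$ in the exponential factors, coming from direct integration of $\widetilde{f}_{j,\ell}z^{\ell-1}$, is the consistent one; the discrepancy with the $-\ell^{-1}$ displayed in Lemma \ref{lem6} is a harmless convention issue that does not affect the argument).
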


\pagebreak[3]
\begin{center}
\textbf{Step 2: Construction of the family of $q$-difference equations.}
\end{center}
We would like to define a convenient family of $q$-difference equations that discretizes $\widetilde{\D}$, ${\D_{q}:=\left(\dq -f_{m}(z,q)\right)\dots\left(\dq -f_{1}(z,q)\right)}$ with for all $1\leq j\leq m$, $z\mapsto f_{j}(z,q)\in \C(\{z\})$.
But before, let us introduce some notations. 
Let $p=1/q$ and define the~$p$-exponential:
$$e_{p}(z):=\displaystyle \sum_{n=0}^{\infty}\dfrac{z^{n}}{[n]_{p}^{!}}\in \C\{z\},$$
where ${[n]_{p}^{!}:=\prod_{l=0}^{n} [l]_{p}}$, ${[l]_{p}:=\left(1+...+p^{l-1}\right)}$. Its radius of convergence is $\frac{1}{1-p}$. For all $z\in \C$ with $|z|<\frac{1}{1-p}$ one may check that we have
%$ \exp(|z|)\leq e_{p}(|z|)$ and 
$|e_{p}(z)|\leq e_{p}(|z|)$. Hence, the dominated convergence theorem gives that we have the uniform convergence in the compact subset of $\C$
\begin{equation}\label{eq5}
\displaystyle\lim\limits_{p\to 1}e_{p}(z)=\exp(z).\end{equation}
Let us also define the $p$-Gamma function 
$$\Gamma_p(z) = (1-p)^{1-z}\prod_{n=0}^\infty 
\frac{1-p^{n+1}}{1-p^{n+z}}.$$ It converges to the classical Gamma function when $p\to 1$  and satisfies 
$$\Gamma_p(z+1)=\frac{1-p^{z}}{1-p}\Gamma_p(z).$$
For $\ell \in \N^{*}$, set 
$$
\begin{array}{llll}
\hat{\mathcal{B}}_{p,\ell}:&\C\{z\}&\longrightarrow&\C\{\z\}\\
&\displaystyle\sum_{n\geq 0} a_{n}z^{n}&\longmapsto& \displaystyle\sum_{n\geq 0} \frac{a_{n}}{\Gamma_p\left(1+\frac{n}{\ell}\right)}\z^{n}.
\end{array}
$$
Let us now define $f_{1},\dots,f_{m}$. Let $\widetilde{f}_{j}^{>0}:=\displaystyle \sum_{\ell=1}^{\infty}\widetilde{f}_{j,\ell}z^{\ell}$, and  $\widetilde{f}_{j}^{\leq 0}:=\displaystyle \sum_{\ell=\mu_{j}}^{0}\widetilde{f}_{j,\ell}z^{\ell}$ so that ${\widetilde{f}_{j}=\widetilde{f}_{j}^{>0}+\widetilde{f}_{j}^{\leq 0}}$. Since $\widetilde{\D}$ is a linear differential equation in coefficients in $\C(\{z\})$, we may apply Theorem \ref{theo4}, to deduce for all $1\leq j\leq m$, the existence of a decomposition 
$$\widetilde{f}_{j}^{>0}=\widetilde{f}_{j,1}^{>0}+\dots+\widetilde{f}_{j,r}^{>0},$$
with $\widetilde{f}_{j,k}^{>0}\in \widetilde{\mathbb{S}}_{\ell_{j,k}}^{d}$, for some $\ell_{j,k}\in \N^{*}$. 

For all $1\leq j\leq m$, let us define $f_{j}^{>0}=f_{j,1}^{>0}+\dots+f_{j,r}^{>0}$, where for all $1\leq k\leq r$, the function ${z\mapsto f_{j,k}^{>0}(z,q)\in \C\{z\}}$ is defined such that for all $q$ close to $1$, $${g_{j,k}:=\hat{\mathcal{B}}_{p,\ell_{j,k}}\left(f_{j,k}^{>0} \right)=\hat{\mathcal{B}}_{\ell_{j,k}}\left(\widetilde{f}_{j,k}^{>0} \right)}.$$ 

Let $1+(q-1)f_{m}^{\leq 0}:=1+(q-1)\widetilde{f}_{m}^{\leq 0}$ and for all $1\leq j< m$, let us set ${z\mapsto f_{j}^{\leq 0}(z,q)\in \C\{z^{-1}\}}$, such that for all $1\leq j<m$,

\begin{equation}\label{eq17}
\frac{1+(q-1)f_{j+1}^{\leq 0}}{q\left(1+(q-1)f_{j}^{\leq 0}\right)}=1+(q-1)\left(\widetilde{f}_{j+1}^{\leq 0}-\widetilde{f}_{j}^{\leq 0} -1\right).
\end{equation}
 
Finally, we set $z\mapsto f_{j}(z,q)\in \C(\{z\})$, such that 
$$1+(q-1)f_{j}:=\left(1+(q-1)f_{j}^{>0}\right)\left(1+(q-1)f_{j}^{\leq 0}\right).$$

\pagebreak[3]
\begin{lem}\label{lem4}
There exists $a>0$ such that the functions $f_{1}^{>0},\dots,f_{m}^{>0}$ belong to $ \mathbb{B}_{d,a,\e}$. Moreover, for all ${1\leq j\leq m}$, we have the uniform convergence $\displaystyle \lim\limits_{q\to 1}f_{j}=\widetilde{S}^{d}\left(\widetilde{f}_{j}\right)$ in every compact subset of $D_{d,a,\e}$.
\end{lem}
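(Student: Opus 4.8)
The plan is to treat the regular part $f_{j}^{>0}$ and the polar part $f_{j}^{\leq 0}$ separately and to recombine them through the defining identity $1+(q-1)f_{j}=(1+(q-1)f_{j}^{>0})(1+(q-1)f_{j}^{\leq 0})$, that is $f_{j}=f_{j}^{>0}+f_{j}^{\leq 0}+(q-1)f_{j}^{>0}f_{j}^{\leq 0}$. Since $\widetilde{S}^{d}$ is linear and fixes every element of $\C[z^{-1}]$ (multiplying by a power of $z$ reduces to a polynomial, whose Borel transform is entire of trivial exponential type), we have $\widetilde{S}^{d}(\widetilde{f}_{j})=\widetilde{S}^{d}(\widetilde{f}_{j}^{>0})+\widetilde{f}_{j}^{\leq 0}$. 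Hence, once we know that $f_{j}^{>0}\to\widetilde{S}^{d}(\widetilde{f}_{j}^{>0})$ and $f_{j}^{\leq 0}\to\widetilde{f}_{j}^{\leq 0}$ uniformly on every compact subset of $D_{d,a,\e}$, both limits being bounded there, the extra term $(q-1)f_{j}^{>0}f_{j}^{\leq 0}$ tends to $0$ uniformly on compacts and we get $\lim_{q\to1}f_{j}=\widetilde{S}^{d}(\widetilde{f}_{j})$; the membership $f_{j}^{>0}\in\mathbb{B}_{d,a,\e}$ is then the remaining, independent claim.

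For the polar part I would solve the recursion (\ref{eq17}) in closed form. Setting $G_{j}:=1+(q-1)f_{j}^{\leq 0}$ and $R_{j}:=1+(q-1)(\widetilde{f}_{j+1}^{\leq 0}-\widetilde{f}_{j}^{\leq 0}-1)$, equation (\ref{eq17}) becomes $G_{j+1}=qR_{j}G_{j}$, so with the initial value $G_{m}=1+(q-1)\widetilde{f}_{m}^{\leq 0}$ one obtains $G_{j}=G_{m}\,q^{-(m-j)}\prod_{i=j}^{m-1}R_{i}^{-1}$. Every factor tends to $1$ as $q\to1$, whence $G_{j}\to1$, and computing $\lim_{q\to1}(G_{j}-1)/(q-1)=\lim_{q\to1}\log(G_{j})/(q-1)$ by telescoping the differences $\widetilde{f}_{i+1}^{\leq 0}-\widetilde{f}_{i}^{\leq 0}$ against the constants $-1$ and the factor $q^{-(m-j)}$ gives exactly $\lim_{q\to1}f_{j}^{\leq 0}=\widetilde{f}_{j}^{\leq 0}$. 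As the $f_{j}^{\leq 0}$ lie in $\C[z^{-1}]$ with coefficients depending continuously on $q$, and as every compact subset of $D_{d,a,\e}$ is bounded away from $0$, this convergence is automatically uniform on compacts.

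The core of the argument is the regular part, and here I would lean on the confluence results of \cite{D3,DVZ} recalled in Remarks \ref{rem1} and \ref{rem2}. By construction $f_{j,k}^{>0}$ is the inverse $q$-Borel transform of the \emph{fixed} series $g_{j,k}=\hat{\mathcal{B}}_{\ell_{j,k}}(\widetilde{f}_{j,k}^{>0})$, which, since $\widetilde{f}_{j,k}^{>0}\in\widetilde{\mathbb{S}}_{\ell_{j,k}}^{d}$, belongs to $\C\{z\}\cap\widetilde{\mathbb{H}}_{\ell_{j,k}}^{d}$, i.e. is analytic near $0$ with exponential growth of order $\ell_{j,k}$ along the direction $d$. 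Reducing to order one by the ramification $\rho_{1/\ell_{j,k}}$, the function $f_{j,k}^{>0}(z,q)$ is a $q$-Laplace summation of $\widetilde{f}_{j,k}^{>0}$ in the direction $d$ whose classical counterpart is $\mathcal{L}_{\ell_{j,k}}^{d}(g_{j,k})=\widetilde{S}^{d}(\widetilde{f}_{j,k}^{>0})$. Using the uniform convergence $e_{p}\to\exp$ of (\ref{eq5}) for the summation kernel and the degeneracy of the Jackson integral into the ordinary one (in the spirit of Lemma \ref{lem2}), one gets, after choosing $a>0$ small and shrinking $\e$ so that $\overline{S}(d-\e,d+\e)$ lies inside every summation sector $\overline{S}(d-\tfrac{\pi}{2\ell_{j,k}},d+\tfrac{\pi}{2\ell_{j,k}})$ and contains no singular direction of Theorem \ref{theo4}, the analyticity of $f_{j,k}^{>0}$ on $D_{d,a,\e}$ together with the uniform convergence $\lim_{q\to1}f_{j,k}^{>0}=\widetilde{S}^{d}(\widetilde{f}_{j,k}^{>0})$ on compacts. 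Summing over the finitely many $k$ and using that $\widetilde{S}^{d}$ is additive yields $\lim_{q\to1}f_{j}^{>0}=\widetilde{S}^{d}(\widetilde{f}_{j}^{>0})$.

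The step I expect to be the main obstacle is the membership $f_{j,k}^{>0}\in\mathbb{B}_{d,a,\e}$, because uniform convergence on compacts does not suffice: the points $q^{-N}z$ leave every compact subset of $D_{d,a,\e}$ and accumulate at the excluded origin, so one needs a bound on $f_{j,k}^{>0}(q^{-N}z,q)$ uniform in both $N>N_{0}$ and $q$ close to $1$. I would read this bound off the $q$-Laplace integral representation: evaluated at $w=q^{-N}z$ the kernel picks up a factor whose decay along the ray of argument $d$ behaves like $\exp(-c\,q^{N}|\z|^{\ell_{j,k}}/|z|^{\ell_{j,k}})$ and, since $g_{j,k}$ is independent of $q$ and only of exponential growth of order $\ell_{j,k}$, this decay dominates its growth as soon as $N$ is large and $|z|<a$ is small. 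Estimating the resulting Jackson sum uniformly in $q$ (via $|e_{p}(\cdot)|\le e_{p}(|\cdot|)$ and (\ref{eq5})) bounds $|f_{j,k}^{>0}(q^{-N}z,q)|$ and, $\mathbb{B}_{d,a,\e}$ being a ring (Remark \ref{rem4}), gives $f_{j}^{>0}\in\mathbb{B}_{d,a,\e}$. Carrying the $q$-uniformity of these exponential estimates all the way up to the boundary point $0$ is the delicate point of the whole lemma.
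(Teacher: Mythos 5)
Your treatment of the polar part and the recombination is fine: the paper compresses this into ``by construction'', and your closed-form telescoping of the recursion (\ref{eq17}) is a correct way to fill it in (note only that the $f_{j}^{\leq 0}$ lie in $\C\{z^{-1}\}$ rather than $\C[z^{-1}]$, which your product formula handles anyway). Your convergence argument for the regular part is also exactly the paper's: it uses the representation $f_{j,k}^{>0}(z,q)=\int_{0}^{z^{\ell_{j,k}}/(q-1)}g_{j,k}\left(\z^{1/\ell_{j,k}}\right)e_{p}\left(p\z/z^{\ell_{j,k}}\right)d_{q}\z$ from (2.11.1) of \cite{DVZ}, the bound $|e_{p}(\cdot)|\leq e_{p}(|\cdot|)$, the convergence (\ref{eq5}), and dominated convergence.

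The genuine gap is in the step you yourself flag as delicate, the membership $f_{j,k}^{>0}\in\mathbb{B}_{d,a,\e}$, and the mechanism you propose is not available. In the representation above the kernel $e_{p}\left(p\z/z^{\ell_{j,k}}\right)$ enters with a \emph{positive} argument, so there is no decay factor of the type $\exp\left(-c\,q^{N}|\z|^{\ell_{j,k}}/|z|^{\ell_{j,k}}\right)$ to play against the exponential growth of $g_{j,k}$: the finiteness of the Jackson sum comes from its truncation at $\z=z^{\ell_{j,k}}/(q-1)$, and near that endpoint $e_{p}$ is evaluated close to the boundary of its disc of convergence (radius $1/(1-p)$), where no bound uniform in $q$ holds term by term; only the full sum can be controlled. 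The paper's device is an exact, global evaluation of the majorant: bound $\left|g_{j,k}\left(\z^{1/\ell_{j,k}}\right)\right|\leq J_{j,k}\exp\left(L_{j,k}|\z|\right)\leq J_{j,k}e_{p}\left(L_{j,k}|\z|\right)$, observe via (\ref{eq6}) that $J_{j,k}e_{p}\left(L_{j,k}\z^{\ell_{j,k}}\right)$ is precisely $\hat{\mathcal{B}}_{p,\ell_{j,k}}$ of the rational function $J_{j,k}/\left(1-L_{j,k}z^{\ell_{j,k}}\right)$, and then use the inversion identity (\ref{eq9}) to evaluate the majorizing Jackson integral in closed form as $\left|J_{j,k}/\left(1-L_{j,k}e^{i\t}\left(e^{i\t'}z\right)^{\ell_{j,k}}\right)\right|$ for some real phases; this is bounded by $J_{j,k}/\left(1-L_{j,k}a^{\ell_{j,k}}\right)$ once $a^{\ell_{j,k}}<1/L_{j,k}$, which is where the choice of $a$ comes from. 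Since this bound holds uniformly on all of $D_{d,a,\e}$ and $q^{-N}z\in D_{d,a,\e}$ whenever $z$ is, membership in $\mathbb{B}_{d,a,\e}$ follows with no shifted-point analysis and no ``$N$ large'' restriction. Without this (or an equivalent) exact-majorant argument, your sketch leaves the central estimate of the lemma unproven.
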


\begin{proof}
By construction, it is sufficient to prove the existence of $a>0$ such that the functions $f_{1}^{>0},\dots,f_{m}^{>0}$ belong to $ \mathbb{B}_{d,a,\e}$ and for all ${1\leq j\leq m}$, we have the uniform convergence $\displaystyle \lim\limits_{q\to 1}f^{>0}_{j}=\widetilde{S}^{d}\left(\widetilde{f}^{>0}_{j}\right)$ in every compact subset of $D_{d,a,\e}$.\\ \par 

In (2.11.1) of \cite{DVZ}, we see that for all $1\leq j\leq m$ and $1\leq k \leq r$ we have
$$f_{j,k}^{>0}(z,q)= \displaystyle \int_{0}^{\frac{z^{\ell_{j,k}}}{q-1}}g_{j,k}\left(\z^{1/\ell_{j,k}}\right)e_{p}\left(p\z/z^{\ell_{j,k}}\right)d_{q}\z.$$
Moreover, we have
$$\widetilde{S}^{d}\left(\widetilde{f}_{j,k}^{>0}(z,q)\right)=\displaystyle \int_{0}^{\infty e^{id\ell_{j,k}}}g_{j,k}\left(\z^{1/\ell_{j,k}}\right)\exp\left(\z/z^{\ell_{j,k}}\right)d\z.$$
In \cite{DVZ}, Page 11, we see that for all $f\in  \C\{z\}$, for all $p<1$, we have the equality 
\begin{equation}\label{eq9}
\displaystyle \int_{0}^{\frac{z^{\ell_{j,k}}}{q-1}}\hat{\mathcal{B}}_{p,\ell_{j,k}}\left(f\right)\left(\z^{1/\ell_{j,k}}\right)e_{p}\left(p\z/z^{\ell_{j,k}}\right)d_{q}\z=f(z).
\end{equation}
Using (\ref{eq5}), the fact that for all $z\in \C$ with $|z|<\frac{1}{1-p}$, we have ${|e_{p}(z)|\leq e_{p}(|z|)}$, and (\ref{eq9}), we may apply the dominated convergence theorem, in order to obtain that for all $a>0$ sufficiently small, we have the uniform convergence ${\displaystyle \lim\limits_{q\to 1}f^{>0}_{j}=\widetilde{S}^{d}\left(\widetilde{f}^{>0}_{j}\right)}$ in every compact subset of $D_{d,a,\e}$. Since for every $a>0$, $\mathbb{B}_{d,a,\e}$ is a ring, see Remark \ref{rem4}, it is now sufficient to prove the existence of $a>0$ such that for all $1\leq j\leq m$ and $1\leq k \leq r$, we have  $f_{j,k}^{>0}\in \mathbb{B}_{d,a,\e}$.\par 
Let us fix $1\leq j\leq m$ and $1\leq k \leq r$. Because of the definition of $\widetilde{\mathbb{S}}_{\ell_{j,k}}^{d}$ there exist $J_{j,k},L_{j,k}>0$ such that 
$$|g_{j,k}(\z)|\leq J_{j,k}\exp\left(L_{j,k}|\z|^{\ell_{j,k}}\right).$$
We remind that for all $z\in \C$ with $|z|<\frac{1}{1-p}$  we have
$ |\exp(z)|\leq e_{p}(|z|)$. Therefore, for all $q$ close to $1$, for all $z\in D_{d,a,\e}$, we obtain
$$\left| f_{j,k}^{>0}(z,q)\right|\leq \displaystyle \int_{0}^{\frac{z^{\ell_{j,k}}}{q-1}}J_{j,k}e_{p}\left(L_{j,k}^{1/\ell_{j,k}}|\z|\right)e_{p}\left(|p\z/z^{\ell_{j,k}}|\right)\frac{|\z|}{\z}d_{q}\z.$$
Let $a>0$ with for all $1\leq j\leq m$ and $1\leq k \leq r$, $a^{\ell_{j,k}}<1/L_{j,k}$. Let us remark that for all $p<1$ for all $z\in D_{d,a,\e}$, we have the equality of functions
\begin{equation}\label{eq6}
\hat{\mathcal{B}}_{p,\ell_{j,k}}\left(\frac{J_{j,k}}{1- L_{j,k}z^{\ell_{j,k}} }\right)=J_{j,k}e_{p}\left(L_{j,k}\z^{\ell_{j,k}}\right).
\end{equation}
  Therefore, if we combine (\ref{eq9}) and (\ref{eq6}), we find that for all $z\in D_{d,a,\e}$, for all $q$ close to~$1$, there exist $\t(z,q),\t'(z,q)$ real functions, such that (note that the right hand side is well defined due to $a^{\ell_{j,k}}<1/L_{j,k}$)  
\begin{equation}\label{eq11}
\begin{array}{ll}
\left|f_{j,k}^{>0}(z,q)\right|&\leq \displaystyle \int_{0}^{\frac{z^{\ell_{j,k}}}{q-1}}J_{j,k}e_{p}\left(L_{j,k}^{1/\ell_{j,k}}|\z|\right)e_{p}\left(|p\z/z^{\ell_{j,k}}|\right)\frac{|\z|}{\z}d_{q}\z\\
&\leq \left|\frac{J_{j,k}}{1- L_{j,k}e^{i\t(z,q)}(e^{i\t'(z,q)}z)^{\ell_{j,k}} }\right|.
\end{array}
\end{equation}
Since $a^{\ell_{j,k}}<1/L_{j,k}$, this proves that for all $1\leq j\leq m$ and $1\leq k \leq r$, we have  $f_{j,k}^{>0}\in \mathbb{B}_{d,a,\e}$. This concludes the proof.
\end{proof}

\pagebreak[3]
\begin{center}
\textbf{Step 3: Construction of the family of solutions}
\end{center}

We consider $$
\left\{\begin{array}{lll}
\D_{q}&=&\left(\dq -f_{m}(z,q)\right)\dots\left(\dq -f_{1}(z,q)\right)\\
\widetilde{\D}&=&\left(\d -\widetilde{f}_{m}(z)\right)\dots\left(\d -\widetilde{f}_{1}(z)\right),
\end{array} \right. 
$$
where $\widetilde{f}_{1},\dots,\widetilde{f}_{m}\in \C((z))$ have been defined in the beginning of the subsection and $f_{1},\dots,f_{m}$ have been defined before Lemma \ref{lem4}. As in $\S \ref{sec1},\S \ref{sec2}$, we are going to see the equations as systems:
$$\left\{\begin{array}{lll}
\dq Y(z,q)&=&C(z,q)Y(z,q)\\ 
 \d \widetilde{Y}(z)&=&\widetilde{C}(z)\widetilde{Y}(z).
 \end{array} \right. $$
By construction, we obtain that the assumptions (\textbf{H1}) to (\textbf{H3}) of $\S \ref{sec41}$ are satisfied. Without loss of generalities, we may assume that for all $\l\in \C^{*}$ with $\arg(\l)=d$, for all 
$q$ close to $1$,  for all ${z\mapsto \L(z,q)\in \mathrm{GL}_{m}\left(\mathcal{M}(\C^{*})\right)}$, family of diagonal matrices solution of (\ref{eq8}), we may consider $U_{\L}^{[\l]}(z,q)$, the fundamental solution for the system ${\dq Y(z,q)=C(z,q)Y(z,q)}$, which is defined in $\S \ref{sec22}$.
 Until the end of the subsection, we fix $\l\in \C^{*}$ with $d=\arg(\l)$. We remind that for all $1\leq j\leq m$, $\widetilde{u}^{d}_{j,j}$, is a non zero meromorphic solution of $\d\widetilde{u}^{d}_{j,j}=\widetilde{S}^{d}\left(\widetilde{f}_{j}\right)\widetilde{u}^{d}_{j,j}$.

\pagebreak[3]
\begin{lem}\label{lem3}
There exists  $z\mapsto \L(z,q)\in \mathrm{GL}_{m}\left(\mathcal{M}(\C^{*})\right)$, family of diagonal matrices solution of (\ref{eq8}), 
such that for all $1\leq j\leq m$, we have the uniform convergence ${\lim\limits_{q \to 1}u_{\L,j,j}^{[\l]}(z,q)=\widetilde{u}_{j,j}^{d}(z)\in \mathcal{A}(d-\e,d+\e )}$,
in every compact subset of $D_{d,a,\e}$.
\end{lem}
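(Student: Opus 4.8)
The plan is to produce the diagonal matrix $\L$ by prescribing its diagonal entries $\L_{j,j}$ one at a time, and to do so by first building the diagonal entries $u_{\L,j,j}^{[\l]}$ of the associated fundamental matrix. Recall from the proof of Proposition~\ref{propo1} that $u_{\L,j,j}^{[\l]}$ is a solution of the scalar $q$-difference equation $\sq u=(1+(q-1)f_{j})u$, and that on the diagonal the factorization $U_{\L}^{[\l]}=F\hat H^{[\l]}\L$ reads $u_{\L,j,j}^{[\l]}=\hat g_{j,j}\L_{j,j}$, where $\hat g_{j,j}$ is the entry from Lemma~\ref{lem1}, of valuation $0$ and constant term $1$. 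Since $1+(q-1)f_{j}=(1+(q-1)f_{j}^{>0})(1+(q-1)f_{j}^{\leq 0})$ and $\sq$ is multiplicative, I would look for $u_{\L,j,j}^{[\l]}$ in the factored form $\phi_{j}\psi_{j}$, where $\sq\phi_{j}=(1+(q-1)f_{j}^{>0})\phi_{j}$ carries the regular part and $\sq\psi_{j}=(1+(q-1)f_{j}^{\leq 0})\psi_{j}$ carries the polar part. Having fixed $\phi_{j}\psi_{j}$, I would simply set $\L_{j,j}:=\phi_{j}\psi_{j}/\hat g_{j,j}$: a direct computation shows that $\L_{j,j}$ solves the diagonal part of (\ref{eq8}), and since it differs from the explicit solution $\T_{q}(z)^{\mu_{j}}\L_{q,t_{0}(1+(q-1)f_{j})}$ only by a $\sq$-invariant germ, which automatically extends to an elliptic function, $\L_{j,j}$ lies in $\mathcal{M}(\C^{*})$. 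Thus $\L:=\mathrm{Diag}(\L_{1,1},\dots,\L_{m,m})\in\mathrm{GL}_{m}(\mathcal{M}(\C^{*}))$ solves (\ref{eq8}) and, by construction, the diagonal of the corresponding $U_{\L}^{[\l]}$ equals $\phi_{j}\psi_{j}$.

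For the regular factor I would take the convergent infinite product $\phi_{j}(z,q):=\prod_{\ell\geq 1}\bigl(1+(q-1)f_{j}^{>0}(q^{-\ell}z)\bigr)$, the germ at $0$ normalized by $\phi_{j}(0)=1$; this converges because $f_{j}^{>0}$ has positive valuation and, by Lemma~\ref{lem4}, belongs to $\mathbb{B}_{d,a,\e}$. Passing to logarithms turns the product into the Jackson integral $\int_{0}^{z}f_{j}^{>0}(t,q)\frac{d_{q}t}{t}$ up to an error of order $q-1$, so Lemma~\ref{lem4} (which gives $f_{j}^{>0}\to\widetilde S^{d}(\widetilde f_{j}^{>0})$ in $\mathbb{B}_{d,a,\e}$) together with the Jackson-to-Riemann confluence of Lemma~\ref{lem2} yields the uniform limit $\phi_{j}\to\exp\bigl(\int_{0}^{z}\widetilde S^{d}(\widetilde f_{j}^{>0})\frac{dt}{t}\bigr)$ on compact subsets of $D_{d,a,\e}$. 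This step is routine once Lemmas~\ref{lem2} and~\ref{lem4} are available.

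The polar factor $\psi_{j}$ is the crux, and I expect it to be the main obstacle. Here the naive downward iteration diverges, since $1+(q-1)f_{j}^{\leq 0}(q^{-\ell}z)$ blows up as $\ell\to\infty$ because $\mu_{j}<0$; hence $\psi_{j}$ cannot be integrated towards $0$ and must be assembled from globally meromorphic building blocks. Writing $\widetilde f_{j}^{\leq 0}=\sum_{\ell=\mu_{j}}^{0}\widetilde f_{j,\ell}z^{\ell}$, the target limit is $\exp\bigl(\int\widetilde f_{j}^{\leq 0}\frac{dt}{t}\bigr)=z^{\widetilde f_{j,0}}\exp\bigl(\sum_{\ell=\mu_{j}}^{-1}\ell^{-1}\widetilde f_{j,\ell}z^{\ell}\bigr)$. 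I would realize the power $z^{\widetilde f_{j,0}}$ by a theta quotient $\L_{q,a}$ with $a$ chosen so that, by Sauloy's confluence results in \cite{S00}, $\L_{q,a}\to z^{\widetilde f_{j,0}}$, and realize each essential-singularity factor $\exp(\ell^{-1}\widetilde f_{j,\ell}z^{\ell})$ by a $q$-exponential whose confluence to the ordinary exponential is the elementary limit (\ref{eq5}). The delicate point is that the leading monomial $z^{\mu_{j}}t_{0}(1+(q-1)f_{j})$ is carried by $\L_{j,j}$ while the sub-leading polar terms are hidden inside $\hat g_{j,j}$, so I must pin down the $\sq$-invariant normalization of $\psi_{j}$ so that the product $\hat g_{j,j}\L_{j,j}=\phi_{j}\psi_{j}$ confluences with no spurious elliptic factor; this is exactly where the defining relation (\ref{eq17}) for the $f_{j}^{\leq 0}$ enters, forcing the successive polar parts to be mutually compatible. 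Combining the two limits and normalizing the remaining free constant, I obtain $u_{\L,j,j}^{[\l]}=\phi_{j}\psi_{j}\to\exp\bigl(\int\widetilde S^{d}(\widetilde f_{j})\frac{dt}{t}\bigr)=\widetilde u_{j,j}^{d}$ uniformly on compact subsets of $D_{d,a,\e}$; and since this limit is a product of exponentials with a power of $z$, it has no zeros on $D_{d,a,\e}$, as demanded in the statement.
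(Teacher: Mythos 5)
Your overall architecture is the same as the paper's: reduce the statement to producing, for each $j$, one scalar solution of $\sq y_{j}=(1+(q-1)f_{j})y_{j}$ converging uniformly on compact subsets of $D_{d,a,\e}$ (nonzero solutions of the limit equation agree up to a constant); build the regular part as the infinite product $\prod_{\nu\leq -1}\left(1+(q-1)f_{j}^{>0}(q^{\nu}z,q)\right)$, whose convergence and confluence rest on Lemma \ref{lem4} (the paper bounds the product by $e_{q}(\a|z|)\leq \exp(\a|z|)$ and applies dominated convergence rather than your logarithm/Jackson-integral route through Lemma \ref{lem2}, but your variant is workable once you control the second-order error $\log(1+x)-x$); and realize the power $z^{\widetilde{f}_{j,0}}$ and the essential-singularity factors by $\L_{q,1+(q-1)\widetilde{f}_{j,0}}$ (Sauloy's confluence) and by the $q$-exponentials $e_{p^{\ell}}\left(-\ell^{-1}\widetilde{f}_{j,\ell}z^{\ell}\right)$ via (\ref{eq5}). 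Your device for recovering $\L_{j,j}$ a posteriori, dividing by $\hat{g}_{j,j}$ from Lemma \ref{lem1} and extending the $\sq$-invariant ratio with $\T_{q}(z)^{\mu_{j}}\L_{q,t_{0}(1+(q-1)f_{j})}$ to all of $\C^{*}$, is also sound.

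The genuine gap is exactly where you flag ``the crux'': your polar factor $\psi_{j}$ is never constructed, and the difficulty is not, as you frame it, a matter of pinning down a $\sq$-invariant normalization. The explicit block $\Psi_{j}:=\L_{q,1+(q-1)\widetilde{f}_{j,0}}\prod_{\ell=\mu_{j}}^{-1}e_{p^{\ell}}\left(-\ell^{-1}\widetilde{f}_{j,\ell}z^{\ell}\right)$ does \emph{not} satisfy $\sq\Psi_{j}=(1+(q-1)f_{j}^{\leq 0})\Psi_{j}$ at finite $q$; the multiplicative discrepancy is a nonconstant function of $z$, so no choice of elliptic factor can remove it, and without controlling it the limit of $\psi_{j}$ is simply not established. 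The paper's missing ingredient is a correction germ $w_{j}\in\mathcal{M}(\C^{*},0)\cap\C\{z^{-1}\}$ with constant term $1$, defined by the exact functional equation $\sq w_{j}/w_{j}=(1+(q-1)f_{j}^{\leq 0})\,\Psi_{j}/\sq\Psi_{j}$, together with the quantitative observation that, by the very construction of the $f_{j}^{\leq 0}$ (the recursion (\ref{eq17}) starting from $f_{m}^{\leq 0}=\widetilde{f}_{m}^{\leq 0}$), $w_{j}(z^{-1},q)$ satisfies an equation of the form $\sq^{-1}w_{j}(z^{-1},q)=\left(1+(q-1)^{2}\b_{j}(z,q)\right)w_{j}(z^{-1},q)$ with $\b_{j}\in\mathbb{B}_{d,a,\e}$ of constant term $0$. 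It is this $(q-1)^{2}$ smallness that yields the bound $|w_{j}(z,q)|\leq e_{q}\left(\e'|z|^{-1}\right)\leq\exp\left(\e'|z|^{-1}\right)$ and hence the uniform convergence $w_{j}\to 1$ on compacts, which is what makes $\psi_{j}=w_{j}\Psi_{j}$ confluence without spurious factors. Your appeal to (\ref{eq17}) as ``forcing the successive polar parts to be mutually compatible'' names the right relation but does not extract this estimate, and without it the proof of the lemma is incomplete.
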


\begin{proof}
Since two non zero solutions of $\d\widetilde{y}_{j}=\widetilde{S}^{d}\left(\widetilde{f}_{j}\right)\widetilde{y}_{j}$ equals up to a multiplication by a non zero complex number, it is sufficient to prove that for all $1\leq j\leq m$, there exists ${z\mapsto y_{j}(z,q)\in \mathcal{M}(\C^{*},0)}$, that satisfies $\sq y_{j}(z,q)=(1+(q-1)f_{j}(z,q))y_{j}(z,q)$, such that we have the uniform convergence ${\lim\limits_{q \to 1}y_{j}(z,q)=:\widetilde{y}_{j}(z)\in \mathcal{A}(d-\e,d+\e )}$,
in every the compact subset of $D_{d,a,\e}$.\par

Let us fix $1\leq j\leq m$. Let us define $z\mapsto w_{j}(z,q)\in \mathcal{M}(\C^{*},0)\cap \C\left\{z^{-1}\right\}$ with constant term equals to $1$ that satisfies 
$$\frac{\sq w_{j}}{w_{j}}=\frac{\left(1+(q-1)f_{j}^{\leq 0}\right)\left(\displaystyle\L_{q,1+(q-1)\widetilde{f}_{j,0}}\displaystyle\prod_{\ell=\mu_{j}}^{-1}e_{p^{\ell}}\left(-\ell^{-1}\widetilde{f}_{j,\ell}z^{\ell}\right)\right)}{\sq\left(\displaystyle\L_{q,1+(q-1)\widetilde{f}_{j,0}}\displaystyle\prod_{\ell=\mu_{j}}^{-1}e_{p^{\ell}}\left(-\ell^{-1}\widetilde{f}_{j,\ell}z^{\ell}\right)\right)}.$$
By construction, we have
$$\frac{\sq\left(w_{j}(z,q)\displaystyle\L_{q,1+(q-1)\widetilde{f}_{j,0}}\displaystyle\prod_{\ell=\mu_{j}}^{-1}e_{p^{\ell}}\left(-\ell^{-1}\widetilde{f}_{j,\ell}z^{\ell}\right)\right)}{w_{j}(z,q)\displaystyle\L_{q,1+(q-1)\widetilde{f}_{j,0}}\displaystyle\prod_{\ell=\mu_{j}}^{-1}e_{p^{\ell}}\left(-\ell^{-1}\widetilde{f}_{j,\ell}z^{\ell}\right)}=1+(q-1)f_{j}^{\leq 0}.$$
Let us fix $K$, a compact subset of $D_{d,a,\e}$, and for $N\in \N^{*}$,  $z\in K$, set
$$ y_{j,N}(z,q):=w_{j}(z,q)\displaystyle\L_{q,1+(q-1)\widetilde{f}_{j,0}}\displaystyle\prod_{\ell=\mu_{j}}^{-1}e_{p^{\ell}}\left(-\ell^{-1}\widetilde{f}_{j,\ell}z^{\ell}\right)\displaystyle\prod_{\nu=-N}^{-1}1+(q-1)f_{j}^{>0}(q^{\nu}z,q).$$
Lemma \ref{lem4} tell us that $f_{j}^{>0}\in \mathbb{B}_{d,a,\e}$. Since its valuation is at least $1$, we find that there exists $\a>0$ such that for all $\nu<0$, for all $z\in K$, for all $q$ close to $1$, ${|1+(q-1)f_{j}^{>0}(q^{\nu}z,q)|<1+(q-1)\a|q^{\nu}z|}$. Let us introduce the $q$-exponential $$e_{q}(z):=\displaystyle \sum_{n=0}^{\infty}\dfrac{z^{n}}{[n]_{q}^{!}}=\displaystyle \prod_{n=0}^{\infty}\left(1+(q-1)q^{-n-1}z\right),$$
where ${[n]_{q}^{!}:=\prod_{l=0}^{n} [l]_{q}}$, ${[l]_{q}:=\left(1+...+q^{l-1}\right)}$.
 It is analytic on~$\C$, with simple zeros on the discrete~$q$-spiral~$\frac{q^{\N^{*}}}{1-q}$ and satisfies~${\dq e_{q}(z)=ze_{q}(z)}$. Using the infinite product expression of $e_{q}$, one finds that for all $z\in K$, for all ${N\in \N^{*}}$, for all $q$ close to $1$, 
\begin{equation}\label{eq19}
 \left|\displaystyle\prod_{\nu=-N}^{-1}1+(q-1)f_{j}^{>0}(q^{\nu}z,q)\right|\leq \displaystyle\prod_{\nu=-\infty}^{-1}1+(q-1)\a|q^{\nu}z|= e_{q}(\a|z|).
 \end{equation}
  In particular, the following infinite product is convergent:
$$ y_{j}(z,q):=w_{j}(z,q)\displaystyle\L_{q,1+(q-1)\widetilde{f}_{j,0}}\displaystyle\prod_{\ell=\mu_{j}}^{-1}e_{p^{\ell}}\left(-\ell^{-1}\widetilde{f}_{j,\ell}z^{\ell}\right)\displaystyle\prod_{\nu=-\infty}^{-1}1+(q-1)f_{j}^{>0}(q^{\nu}z,q).$$
Moreover, ${z\mapsto y_{j}(z,q)\in \mathcal{M}(\C^{*},0)}$, satisfies $\sq y_{j}(z,q)=(1+(q-1)f_{j}(z,q))y_{j}(z,q)$. To conclude the proof, it is sufficient to prove the uniform convergence $\lim\limits_{q \to 1}y(z,q)=:\widetilde{y}_{j}(z)$
in~$K$.\par 

By construction, $w_{j}(z^{-1},q)$ satisfies a linear $q$-difference equation of the form (remind that $p=1/q$)
$$\sq^{-1} w_{j}(z^{-1},q)=(1+(q-1)^{2}\b_{j}(z,q))w_{j}(z^{-1},q),$$
where $\b_{j}(z,q)\in \mathbb{B}_{d,a,\e}$ and $z\mapsto \b_{j}(z,q) \in\C\left\{z\right\}$ has constant term equals to $0$. Since ${z\mapsto w_{j}(z,q)\in \C\{z^{-1}\}}$ has constant term equal to $1$, we obtain that for every $\e'>0$, for every $z\in K$, we have for $q$ sufficiently close to $1$
$$|w_{j}(z,q)|\leq  \left|\displaystyle\prod_{\nu=-\infty}^{-1}1+(q-1)\e'\left(q^{\nu}z^{-1}\right)\right|\leq e_{q}\left(\e'|z|^{-1}\right).$$
We may check that for all $z\in \C$, $e_{q}\left(\e'|z|^{-1}\right)\leq \exp\left(\e'|z|^{-1}\right)$.
Therefore, we have the uniform convergence ${\lim\limits_{q \to 1}w_{j}(z,q)=1}$ in $K$. The uniform convergence ${\lim\limits_{q \to 1}\displaystyle\L_{q,1+(q-1)\widetilde{f}_{j,0}}=z^{\widetilde{f}_{j,0}}}$ in 
$K$ can be deduced from Page 1048 of \cite{S00}. The uniform convergence ${\lim\limits_{p \to 1}\displaystyle\prod_{\ell=\mu_{j}}^{-1}e_{p^{\ell}}\left(-\ell^{-1}\widetilde{f}_{j,\ell}z^{\ell}\right)=\displaystyle\prod_{\ell=\mu_{j}}^{-1}\exp\left(-\ell^{-1}\widetilde{f}_{j,\ell}z^{\ell}\right)}$ in $K$ is a consequence of the inequality above (\ref{eq5}) and (\ref{eq5}).
Because of~(\ref{eq19}), we obtain that for all $q>1$, for all $z\in \C$,
 $$\left|\displaystyle\prod_{\nu=-\infty}^{-1}1+(q-1)f_{j}^{>0}(q^{\nu}z,q)\right|\leq e_{q}(\a |z|)\leq \exp(\a|z|).$$ Then, the uniform convergence ${\lim\limits_{q \to 1}\displaystyle\prod_{\nu=-\infty}^{-1}1+(q-1)f_{j}^{>0}(q^{\nu}z,q)}$ in $K$ can be deduced with the dominated convergence theorem. This proves the uniform convergence ${\lim\limits_{q \to 1}y(z,q)=\widetilde{y}_{j}(z)}$
in every compact subset of $D_{d,a,\e}$.
\end{proof}

\pagebreak[3]
\begin{center}
\textbf{Step 4: Statement and proof of the main result.}
\end{center}

We are now ready to state the main result of the paper, but before, let us remind some notations. 
We still consider $$
\left\{\begin{array}{lll}
\D_{q}&=&\left(\dq -f_{m}(z,q)\right)\dots\left(\dq -f_{1}(z,q)\right)\\
\widetilde{\D}&=&\left(\d -\widetilde{f}_{m}(z)\right)\dots\left(\d -\widetilde{f}_{1}(z)\right),
\end{array} \right. 
$$
where $\widetilde{f}_{1},\dots,\widetilde{f}_{m}\in \C((z))$ have been defined in the beginning of the subsection and $f_{1},\dots,f_{m}$ have been defined before Lemma \ref{lem4}. As in $\S \ref{sec1},\S \ref{sec2}$, we are going to see the equations as systems:
$$\left\{\begin{array}{lll}
\dq Y(z,q)&=&C(z,q)Y(z,q)\\ 
 \d \widetilde{Y}(z)&=&\widetilde{C}(z)\widetilde{Y}(z).
 \end{array} \right. $$ Let ${z\mapsto \L(z,q)\in \mathrm{GL}_{m}\left(\mathcal{M}(\C^{*})\right)}$, family of diagonal matrices solution of (\ref{eq8}) that has been defined in Lemma \ref{lem3}. Let $\l\in \C^{*}$ with $\arg(\l)=d$. Let $U_{\L}^{[\l]}(z,q)$, be the fundamental solution for the system ${\dq Y(z,q)=C(z,q)Y(z,q)}$, which is defined in $\S \ref{sec22}$. We remind that for all $1\leq j\leq m$, $\widetilde{u}^{d}_{j,j}$, is a non zero meromorphic solution of $\d\widetilde{u}^{d}_{j,j}=\widetilde{S}^{d}\left(\widetilde{f}_{j}\right)\widetilde{u}^{d}_{j,j}$ and ${\left(\widetilde{u}^{d}_{j,k}(z)\right):=\widetilde{U}^{d}(z)\in \mathrm{GL}_{m}(\mathcal{A}(d-\e,d+\e ))}$, is an upper triangular matrix fundamental solution for $\d \widetilde{Y}(z)=\widetilde{C}(z)\widetilde{Y}(z)$ defined as follows:
 for all $1\leq j<k\leq m$, for all $z\in D_{d,a,\e}$, set 
$$\widetilde{u}^{d}_{j,k}(z):=\widetilde{u}^{d}_{j,j}(z)\displaystyle\int_{0}^{z} \frac{\widetilde{u}^{d}_{j+1,k}(t)}{\widetilde{u}^{d}_{j,j}(t)}\frac{dt}{t}.$$
After replacing $a$ by a smaller positive real number, we may reduce to the case where for all $1\leq j\leq m$, $\widetilde{u}^{d}_{j,j}$ does not vanish on $D_{d,a,\e}$.
\pagebreak[3]
\begin{theo}\label{theo3}
 We have the uniform convergence
 $$\lim\limits_{q \to 1}U_{\L}^{[\l]}(z,q)=\widetilde{U}^{d}(z),$$ 
in every the compact subset of $D_{d,a,\e}$. 
\end{theo}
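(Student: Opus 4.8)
The plan is to deduce Theorem \ref{theo3} as an application of Theorem \ref{theo2}, so that the whole task reduces to checking that the data constructed in Steps~1--3 satisfy the hypotheses \textbf{(H1)}--\textbf{(H5)}, and then to identifying the limit matrix $\widetilde{U}_{\L}^{d}$ produced by Theorem \ref{theo2} with the matrix $\widetilde{U}^{d}$ of the statement. By construction (as recorded at the start of Step~3) the assumptions \textbf{(H1)}--\textbf{(H3)} hold, and Lemma \ref{lem3} furnishes precisely the family of diagonal matrices $\L$ for which \textbf{(H4)} holds, with $\lim_{q\to 1}u_{\L,j,j}^{[\l]}=\widetilde{u}^{d}_{j,j}$ uniformly on compacts of $D_{d,a,\e}$. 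Thus the only hypothesis requiring genuine work is \textbf{(H5)}: I must show that for all $1\leq j<m$ the quotient $u_{\L,j+1,j+1}^{[\l]}/\bigl(z\,\sq(u_{\L,j,j}^{[\l]})\bigr)$ lies in $\mathbb{B}_{d,a,\e}$.

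To verify \textbf{(H5)} I would use the explicit product representation of the diagonal entries obtained in the proof of Lemma \ref{lem3}, namely
$$u_{\L,j,j}^{[\l]}=w_{j}\,\L_{q,1+(q-1)\widetilde{f}_{j,0}}\prod_{\ell=\mu_{j}}^{-1}e_{p^{\ell}}\!\left(-\ell^{-1}\widetilde{f}_{j,\ell}z^{\ell}\right)\prod_{\nu=-\infty}^{-1}\bigl(1+(q-1)f_{j}^{>0}(q^{\nu}z,q)\bigr),$$
together with the relation $\sq(u_{\L,j,j}^{[\l]})=(1+(q-1)f_{j})u_{\L,j,j}^{[\l]}$. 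Evaluating the quotient at $q^{-N}z$, I would split it into two groups of factors. The factors $w_{j}$, $1+(q-1)f_{j}$ and the infinite products over $f_{j}^{>0}$ are, by Lemma \ref{lem4} and the bound (\ref{eq19}), uniformly bounded above and below away from $0$ on compacts, uniformly in $N$ and in $q$ close to $1$; since $\mathbb{B}_{d,a,\e}$ is a ring (Remark \ref{rem4}), these contribute only bounded multiplicative corrections. The remaining dominant factor is the ratio of the monomial-exponential parts
$$\frac{\L_{q,1+(q-1)\widetilde{f}_{j+1,0}}\prod_{\ell=\mu_{j+1}}^{-1}e_{p^{\ell}}\!\left(-\ell^{-1}\widetilde{f}_{j+1,\ell}z^{\ell}\right)}{z\,(1+(q-1)f_{j})\,\L_{q,1+(q-1)\widetilde{f}_{j,0}}\prod_{\ell=\mu_{j}}^{-1}e_{p^{\ell}}\!\left(-\ell^{-1}\widetilde{f}_{j,\ell}z^{\ell}\right)}$$
evaluated at $q^{-N}z$, which is the exact $q$-analogue of the expression controlled in Lemma \ref{lem6}. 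Using the estimate $|e_{p^{\ell}}(w)|\leq e_{p^{\ell}}(|w|)$ and the uniform convergence of $e_{p^{\ell}}$ and of $\L_{q,1+(q-1)\widetilde{f}_{j,0}}$ towards their confluent limits $\exp$ and $z^{\widetilde{f}_{j,0}}$ (the latter from Page 1048 of \cite{S00}), I would compare this $q$-ratio with its classical counterpart and invoke the sectorial choice of $d$ and $\e$ made in Lemma \ref{lem6}, which forces the dominant exponential $\exp\!\left(-\mu_{j}^{-1}\widetilde{f}_{j,\mu_{j}}(q^{-N}z)^{\mu_{j}}\right)$ appearing in the denominator to blow up along the entire ray. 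Hence the quotient stays bounded (and in fact tends to $0$) as $q^{-N}z\to 0$, uniformly in $q$, which gives \textbf{(H5)}.

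With \textbf{(H1)}--\textbf{(H5)} in hand, Theorem \ref{theo2} yields $\lim_{q\to 1}U_{\L}^{[\l]}=\widetilde{U}_{\L}^{d}$ uniformly on compacts of $D_{d,a,\e}$, and it remains to observe that $\widetilde{U}_{\L}^{d}=\widetilde{U}^{d}$. Both matrices are upper triangular; their diagonal entries coincide because the limits in \textbf{(H4)} are exactly the chosen solutions $\widetilde{u}^{d}_{j,j}$ by Lemma \ref{lem3}, and their off-diagonal entries are defined by the identical recursive integral formula $\widetilde{u}^{d}_{j,k}=\widetilde{u}^{d}_{j,j}\int_{0}^{z}(\widetilde{u}^{d}_{j+1,k}/\widetilde{u}^{d}_{j,j})\,dt/t$, so they agree entrywise. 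I expect the main obstacle to be precisely the uniform control, simultaneously in $N$ and in $q$, of the ratio of $q$-exponential and theta-type products in \textbf{(H5)}: the classical Lemma \ref{lem6} only governs a single limit $x\to 0^{+}$ along a ray, and transferring this to a bound that is uniform in the discretization parameter $N$ and in $q$ requires carefully comparing the $q$-special functions $e_{p^{\ell}}$ and $\L_{q,a}$ with their confluent limits while keeping the poles on the $q$-spiral $-\l q^{\Z}$ away from the compact set.
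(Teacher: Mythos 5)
Your global skeleton coincides exactly with the paper's: reduce Theorem \ref{theo3} to Theorem \ref{theo2}, note that \textbf{(H1)}--\textbf{(H3)} hold by construction and \textbf{(H4)} by Lemma \ref{lem3}, and identify the limit $\widetilde{U}_{\L}^{d}$ with $\widetilde{U}^{d}$ through the common recursive integral formula (this last identification is correct and is how the paper sets things up). The genuine gap is your treatment of \textbf{(H5)}, which is the only substantive step and which the paper isolates as Lemma \ref{lem5}. Your plan is to estimate $u_{\L,j+1,j+1}^{[\l]}/\bigl(z\,\sq(u_{\L,j,j}^{[\l]})\bigr)$ at the points $q^{-N}z$ directly from the product representation, by comparing $e_{p^{\ell}}$ and $\L_{q,1+(q-1)\widetilde{f}_{j,0}}$ with $\exp$ and $z^{\widetilde{f}_{j,0}}$ and then invoking the sector condition of Lemma \ref{lem6}. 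This comparison cannot work as stated: the convergences you cite are uniform only on compact sets, whereas the arguments $-\ell^{-1}\widetilde{f}_{j,\ell}(q^{-N}z)^{\ell}$ (with $\ell\leq -1$) tend to infinity with $N$, and at infinity $e_{q}$ behaves nothing like $\exp$ --- its modulus grows in essentially every direction (roughly $\log|e_{q}(w)|\sim(\log|w|)^{2}/(2\log q)$, with zeros on a $q$-spiral), so the classical sectorial decay of Lemma \ref{lem6} does not transfer to the $q$-ratio by confluence. For fixed $q$ the higher slope still dominates by this growth-rate count, but the estimate degenerates as $q\to 1$, and \textbf{(H5)} demands a bound \emph{joint} in $(N,q)$; neither the limit $q\to1$ at fixed $z$ nor $N\to\infty$ at fixed $q$ supplies it --- a difficulty you flag in your last sentence but do not resolve. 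Even your preliminary claim that $w_{j}$ and the infinite products are bounded multiplicative corrections uniformly in $N$ is unproven: the available bound $|w_{j}(z,q)|\leq e_{q}\left(\e'|z|^{-1}\right)\leq \exp\left(\e'|z|^{-1}\right)$ explodes as $q^{-N}z\to 0$.

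The missing idea, and the paper's actual mechanism, is a functional-equation monotonicity argument that avoids all asymptotics of the $q$-special functions. The normalization (\ref{eq17}) built into the definition of $f_{j}^{\leq 0}$ makes the quotient $g:=u_{\L,j+1,j+1}^{[\l]}/\bigl(z\,\sq(u_{\L,j,j}^{[\l]})\bigr)$ satisfy the first-order $q$-difference equation (\ref{eq14}), whose coefficient is $1+(q-1)\bigl(\widetilde{f}_{j+1}^{\leq 0}-\widetilde{f}_{j}^{\leq 0}-1\bigr)$ times factors of the form $1+O(q-1)$ controlled by (\ref{eq11}). Lemma \ref{lem6} enters only through its consequence (\ref{eq18}): on the chosen sector the leading term $-\widetilde{f}_{j,\mu_{j}}z^{\mu_{j}}$ has positive real part, so the coefficient has modulus at least $1+(q-1)M|z|^{\mu_{j}}$, which, since $\mu_{j}<0$, dominates the $1+O(q-1)$ corrections for $|z|$ small. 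This yields (\ref{eq16}), $\left|\sq(g)\right|>|g|$ on $D_{d,a'',\e}$, whence by iteration $\left|g\left(q^{-N}z,q\right)\right|\leq |g(z,q)|$ for all $N$; the right-hand side is uniformly bounded on compacts for $q$ close to $1$ by the uniform convergence coming from Lemma \ref{lem3} and the non-vanishing of $\widetilde{u}^{d}_{j,j}$, and membership in $\mathbb{B}_{d,a,\e}$ follows. This single inequality converts the one classical ray condition of Lemma \ref{lem6} into the bound uniform in both $N$ and $q$ that your direct estimation could not reach; to salvage your route you would in effect have to reprove (\ref{eq16}) factor by factor, which amounts to the paper's computation.
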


We want to apply Theorem \ref{theo2} to prove Theorem \ref{theo3}. We already know that assumptions \textbf{(H1)} to \textbf{(H3)} are satisfied. Because of Lemma \ref{lem3},  hypothesis \textbf{(H4)} is also satisfied. We have to check that the assumption \textbf{(H5)} is satisfied. This is the goal of the following lemma.\par

\pagebreak[3]
\begin{lem}\label{lem5}
For all $1\leq j <m$, we have
$\dfrac{u_{\L,j+1,j+1}^{[\l]}}{z\sq\left(u_{\L,j,j}^{[\l]}\right)}\in \mathbb{B}_{d,a,\e}$. 
\end{lem}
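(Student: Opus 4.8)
The plan is to feed the explicit product formula for the diagonal entries obtained in Lemma~\ref{lem3} into the defining condition of $\mathbb{B}_{d,a,\e}$. Recall from that proof that, after the fixed normalisation,
$$u_{\L,j,j}^{[\l]}(z,q)=w_j(z,q)\,\L_{q,1+(q-1)\widetilde{f}_{j,0}}\prod_{\ell=\mu_j}^{-1}e_{p^\ell}\!\left(-\ell^{-1}\widetilde{f}_{j,\ell}z^\ell\right)\prod_{\nu=-\infty}^{-1}\!\left(1+(q-1)f_j^{>0}(q^\nu z,q)\right).$$
By Definition~\ref{defi2}, and since $\sq(u_{\L,j,j}^{[\l]})(q^{-N}z)=u_{\L,j,j}^{[\l]}(q^{-N+1}z)$, it suffices to bound
$$\left|\frac{u_{\L,j+1,j+1}^{[\l]}(q^{-N}z,q)}{q^{-N}z\,u_{\L,j,j}^{[\l]}(q^{-N+1}z,q)}\right|$$
uniformly for $z$ in a compact $K\subset D_{d,a,\e}$, for $q$ close to $1$ and for $N$ large. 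First I would substitute the formula above for both indices and split the quotient into an \emph{exponential block}, gathering the factors $\L_{q,\cdot}$ and the finite products of $q$-exponentials $e_{p^\ell}$, and a \emph{tame block}, gathering the unit $w_j$ and the infinite products over $f_j^{>0}$.

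For the exponential block the decisive input is Lemma~\ref{lem6}: the direction $d$ was chosen precisely so that the corresponding classical quotient tends to $0$ as the argument tends to $0$ along $\arg=d$, the dominant term being the factor of most negative slope $\mu_j$ in the denominator. I would transfer this to the $q$-setting by comparing each $q$-exponential with the ordinary exponential through $|e_{p^\ell}(w)|\le e_{p^\ell}(|w|)$ and the convergence~(\ref{eq5}), and by replacing $\L_{q,1+(q-1)\widetilde{f}_{j,0}}$ by its limit $z^{\widetilde{f}_{j,0}}$ as in Lemma~\ref{lem3}. Evaluated at $q^{-N}z$ this block then decays like $\exp\!\left(-c\,|q^{-N}z|^{-|\mu_j|}\right)$ for some $c>0$, with $|\mu_j|\ge 1$ because $v_0(\widetilde{f}_j)=\mu_j<0$ for $j<m$.

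It remains to control the tame block. The infinite products over $f_j^{>0}$ are dominated, via~(\ref{eq19}) and Lemma~\ref{lem4}, by $e_q(\a|z|)\le\exp(\a|z|)$, which tends to $1$ as $q^{-N}z\to 0$, so they are harmless. The factors $w_j$, however, are only bounded by $\exp(\e'|z|^{-1})$ with $\e'>0$ arbitrary, and this quantity \emph{grows} as $q^{-N}z\to 0$; the point is that this growth is of order $|z|^{-1}$, hence strictly slower than the decay $\exp(-c|z|^{-|\mu_j|})$ of the exponential block whenever $|\mu_j|>1$, and slower still once $\e'$ is taken smaller than $c$ in the boundary case $\mu_j=-1$. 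Choosing $a$ and $\e'$ accordingly, the tame block is absorbed by the exponential block and the whole quotient stays bounded (indeed tends to $0$) uniformly in $(q,N)$ on $K$.

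The hard part is exactly this uniformity in the two coupled limits. Along the spiral the argument $(q^{-N}z)^{\mu_j}$ of the dominant $q$-exponential blows up as $N\to\infty$, while simultaneously the base $p^\ell$ of $e_{p^\ell}$ approaches $1$ as $q\to1$, so one cannot merely invoke convergence on compacts: the comparison of $e_{p^\ell}$ with $\exp$ has to be quantitative enough that the decay furnished by Lemma~\ref{lem6} survives the $q$-deformation, uniformly in $q$ and $N$. Shrinking $a$ so that the relevant arguments remain within the regime where both the domination $|e_{p^\ell}(w)|\le e_{p^\ell}(|w|)$ and the growth estimate of Lemma~\ref{lem6} are effective is the crux.
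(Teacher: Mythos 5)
Your proposal has a genuine gap, and it sits exactly where you yourself place it: the ``crux'' of uniform two-variable $(q,N)$ control of the exponential block is announced but never carried out, and the tools you cite cannot deliver it. The inequality $|e_{p^{\ell}}(w)|\leq e_{p^{\ell}}(|w|)$ only bounds moduli from \emph{above}, whereas your block requires a \emph{lower} bound on the denominator factors $e_{p^{\ell}}\left(-\ell^{-1}\widetilde{f}_{j,\ell}(q^{-N}z)^{\ell}\right)$ — entire functions with zeros on a $q$-spiral, evaluated at arguments of size $q^{N|\ell|}|z|^{\ell}$ that leave every compact set, which is precisely where the convergence (\ref{eq5}) ceases to be available. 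Moreover the asserted decay rate $\exp\left(-c|q^{-N}z|^{-|\mu_{j}|}\right)$ is false for fixed $q>1$: since $p^{\ell}=q^{|\ell|}>1$, these $q$-exponentials have theta-type growth, $\log\left|e_{q^{|\ell|}}(w)\right|=O\left((\log|w|)^{2}\right)$, so the reciprocal of the block decays along the spiral only like $q^{-CN^{2}}$, far more slowly than exponentially in $|q^{-N}z|^{-|\mu_{j}|}$. Your absorption argument then breaks: the only bound you quote for $w_{j}$, namely $\exp(\e'|z|^{-1})$, becomes $\exp\left(\e' q^{N}|z|^{-1}\right)$ at $q^{-N}z$, which for fixed $q$ dwarfs $q^{CN^{2}}$ as $N\to\infty$, so ``choosing $\e'$ smaller than $c$'' does not save the boundary case $\mu_{j}=-1$ (a usable bound on $w_{j}$ along the spiral does exist, via its functional equation with multiplier $1+(q-1)^{2}\b_{j}$, but it is of a different shape and you would still have to assemble the whole uniform estimate, which is the entire difficulty).

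The paper avoids all of this with a different idea: it never estimates the products at all. By the very construction (\ref{eq17}) of $f_{j}^{\leq 0}$, the ratio $g:=u_{\L,j+1,j+1}^{[\l]}/\left(z\sq\left(u_{\L,j,j}^{[\l]}\right)\right)$ satisfies the first-order $q$-difference equation (\ref{eq14}) whose multiplier is an explicit elementary function of $z$. Lemma \ref{lem6} (the choice of $d$) yields the pointwise lower bound (\ref{eq18}), $\left|1+(q-1)\left(\widetilde{f}_{j+1}^{\leq 0}-\widetilde{f}_{j}^{\leq 0}-1\right)\right|>1+(q-1)M|z|^{\mu_{j}}$, and together with the bound (\ref{eq11}) on the $f^{>0}$ corrections one obtains $\left|\sq(g)\right|>|g|$ on $D_{d,a'',\e}$ for $q$ close to $1$, i.e.\ (\ref{eq16}). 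Hence $N\mapsto\left|g(q^{-N}z,q)\right|$ is decreasing, and boundedness on a compact — available because $g$ converges uniformly to $\widetilde{u}_{j+1,j+1}^{d}/\left(z\widetilde{u}_{j,j}^{d}\right)$ by Lemma \ref{lem3} and $\widetilde{u}_{j,j}^{d}$ does not vanish on $D_{d,a,\e}$ — propagates down the entire backward spiral, which is exactly membership in $\mathbb{B}_{d,a,\e}$. This monotonicity trick converts the coupled-limit problem you flagged into a single inequality on an explicit multiplier, checked once; without it, or some equally quantitative substitute for the $q$-exponential estimates, your argument remains a plan rather than a proof.
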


\begin{proof}
Let us fix $1\leq j <m$. Because of (\ref{eq17}), we obtain 
\begin{equation}\label{eq14}
\sq\left(\dfrac{u_{\L,j+1,j+1}^{[\l]}}{z\sq \left(u_{\L,j,j}^{[\l]}\right)}\right)=\frac{1+(q-1)\sq\left(f_{j} \right)}{1+(q-1)\sq^{2}\left(f_{j} \right)}\frac{1+(q-1)f_{j+1}^{>0}}{1+(q-1)f_{j}^{>0}}\left(1+(q-1)\left(\widetilde{f}_{j+1}^{\leq 0}-\widetilde{f}_{j}^{\leq 0} -1\right)\right) \dfrac{u_{\L,j+1,j+1}^{[\l]}}{z\sq\left(u_{\L,j,j}^{[\l]}\right)}.
\end{equation}
We remind, see Lemma \ref{lem6}, that for all $z\in D_{d,a,\e}$,  $$\lim\limits_{\substack{ x \to 0^{+}\\ x\in \R_{>0}}}\left|\frac{\exp\left(\widetilde{f}_{j+1,0}\log(xz)\right)\displaystyle\prod_{\ell=\mu_{j+1}}^{-1}\exp\left(-\ell^{-1}\widetilde{f}_{j+1,\ell}(xz)^{\ell}\right)}{\exp\left(\left(\widetilde{f}_{j,0}+1\right)\log(xz)\right)\displaystyle\prod_{\ell=\mu_{j}}^{-1}\exp\left(-\ell^{-1}\widetilde{f}_{j,\ell}(xz)^{\ell}\right)}\right|=0.$$
Then, there exist $a'\in ]0,a[$ and $M>0$, such that for every $z\in D_{d,a',\e}$, for all $q$ sufficiently close to $1$, 
\begin{equation}\label{eq18}
\left| 1+(q-1)\left(\widetilde{f}_{j+1}^{\leq 0}-\widetilde{f}_{j}^{\leq 0} -1\right)\right|>1+(q-1)M|z|^{\mu_{j}}.
\end{equation}
The triangular inequality gives us $$\left|\frac{1+(q-1)\sq\left(f_{j} \right)}{1+(q-1)\sq^{2}\left(f_{j} \right)}\frac{1+(q-1)f_{j+1}^{>0}}{1+(q-1)f_{j}^{>0}}\right|\geq \left|\frac{1-(q-1)\left|\sq\left(f_{j} \right)\right|}{1+(q-1)\left|\sq^{2}\left(f_{j} \right)\right|}\frac{1-(q-1)\left|f_{j+1}^{>0}\right|}{1+(q-1)\left|f_{j}^{>0}\right|}\right|.$$
Using this inequality, (\ref{eq11}), (\ref{eq14}), and (\ref{eq18}), we find that there exists $a''\in ]0,a'[$, such that for every $z\in D_{d,a'',\e}$, for all $q$ sufficiently close to $1$, 
\begin{equation}\label{eq16}
\left|\sq\left(\dfrac{u_{\L,j+1,j+1}^{[\l]}}{z\sq\left(u_{\L,j,j}^{[\l]}\right)}\right)\right|> \left|\dfrac{u_{\L,j+1,j+1}^{[\l]}}{z\sq\left(u_{\L,j,j}^{[\l]}\right)}\right|.
\end{equation}
We remind that $\widetilde{u}^{d}_{j,j}$ does not vanish on $D_{d,a,\e}$. Then, with Lemma \ref{lem3}, we obtain that we have the uniform convergence ${\lim\limits_{q \to 1}\dfrac{u_{\L,j+1,j+1}^{[\l]}}{z \sq\left(u_{\L,j,j}^{[\l]}\right)}=\dfrac{\widetilde{u}_{j+1,j+1}^{d}(z)}{z\widetilde{u}_{j,j}^{d}(z)}\in \mathcal{A}(d-\e,d+\e )}$,
in every compact subset of $D_{d,a,\e}$. If we combine this fact and (\ref{eq16}), we find $\dfrac{u_{\L,j+1,j+1}^{[\l]}}{zu_{\L,j,j}^{[\l]}}\in \mathbb{B}_{d,a,\e}$.
This proves the lemma.
\end{proof}

%We finish the proof of Theorem \ref{theo3}. We just have to prove that the assumption \textbf{(H6)} is satisfied. Let ${(\hat{g}_{j,k}(z,q)):=\hat{G}(z,q)}$ that is defined in Lemma \ref{lem1} that satisfies
%$$C(z,q)=\hat{G}(z,q)\left[\mathrm{Diag}\left(z^{v_{0}(1+(q-1)f_{1})}t_{0}\left(1+(q-1)f_{1}\right),\dots,z^{v_{0}(1+(q-1)f_{m})}t_{0}\left(1+(q-1)f_{m}\right)\right)\right]_{\displaystyle\sq}.$$
%Using the fact that $v_{0}\left(1+(q-1)f_{1}\right)\leq 0,\dots,v_{0}\left(1+(q-1)f_{m}\right)\leq 0$, we obtain, see Remark \ref{rem5}, that $z\mapsto\hat{G}(z,q)\in \mathrm{GL}_{m}\Big(\C[[z]]\Big)$. Using the proof of Lemma \ref{lem1}, we obtain that $t_{0}\left(\hat{g}_{j,k}(z,q)\right)$ converges when $q$ goes to $1$. Using Lemmas \ref{lem3}, we obtain that for every $n\in \N$, for all $1\leq j<k\leq m$, for all $z\in D_{d,a,\e}$,  
%$$\lim\limits_{N \to +\infty}\left|\dfrac{t_{0}\left(\hat{g}_{j,k}(z,q)\right) (q^{-N}z)^{n}u_{\Lambda,k,k}^{[\l]}(q^{-N}z,q)}{u_{\Lambda,j,j}^{[\l]}(q^{-N}z,q)}\right|=0.$$
%Hence, the assumption \textbf{(H6)} is satisfied and we may apply Theorem \ref{theo2} to conclude the proof of Theorem \ref{theo3}.

\bibliographystyle{alpha}
\bibliography{biblio}

\def\udot#1{\ifmmode\oalign{$#1$\crcr\hidewidth.\hidewidth
  }\else\oalign{#1\crcr\hidewidth.\hidewidth}\fi} \def\cprime{$'$}
  \def\polhk#1{\setbox0=\hbox{#1}{\ooalign{\hidewidth
  \lower1.5ex\hbox{`}\hidewidth\crcr\unhbox0}}}
\begin{thebibliography}{DVRSZ03}

\bibitem[Bal94]{B}
Werner Balser.
\newblock {\em From divergent power series to analytic functions}, volume 1582
  of {\em Lecture Notes in Mathematics}.
\newblock Springer-Verlag, Berlin, 1994.
\newblock Theory and application of multisummable power series.

\bibitem[Ber92]{Ber}
Daniel Bertrand.
\newblock Groupes alg\'ebriques et \'equations diff\'erentielles lin\'eaires.
\newblock {\em Ast\'erisque}, (206):Exp.\ No.\ 750, 4, 183--204, 1992.
\newblock S{\'e}minaire Bourbaki, Vol. 1991/92.

\bibitem[B{\'e}z92]{Be}
Jean-Paul B{\'e}zivin.
\newblock Sur les \'equations fonctionnelles aux {$q$}-diff\'erences.
\newblock {\em Aequationes Math.}, 43(2-3):159--176, 1992.

\bibitem[Bug11]{Bu}
Virginie Bugeaud.
\newblock Classification analytique et th\'eorie de {G}alois locales des
  modules aux {$q$}-diff\'erences \`a pentes non enti\`eres.
\newblock {\em C. R. Math. Acad. Sci. Paris}, 349(19-20):1037--1039, 2011.

\bibitem[Dre14a]{D4}
Thomas Dreyfus.
\newblock Building meromorphic solutions of $q$-difference equations using a
  borel-laplace summation.
\newblock {\em To appear in Int. Math. Res. Not. IMRN}, 2014.

\bibitem[Dre14b]{D3}
Thomas Dreyfus.
\newblock Confluence of meromorphic solutions of $q$-difference equations.
\newblock {\em To appear in Ann. Inst. Fourier (Grenoble)}, 2014.

\bibitem[DSK05]{DSK}
Alberto De~Sole and Victor~G. Kac.
\newblock On integral representations of {$q$}-gamma and {$q$}-beta functions.
\newblock {\em Atti Accad. Naz. Lincei Cl. Sci. Fis. Mat. Natur. Rend. Lincei
  (9) Mat. Appl.}, 16(1):11--29, 2005.

\bibitem[DVRSZ03]{DVRSZ}
Lucia Di~Vizio, Jean-Pierre Ramis, Jacques Sauloy, and Changgui Zhang.
\newblock \'{E}quations aux {$q$}-diff\'erences.
\newblock {\em Gaz. Math.}, (96):20--49, 2003.

\bibitem[DVZ09]{DVZ}
Lucia Di~Vizio and Changgui Zhang.
\newblock On {$q$}-summation and confluence.
\newblock {\em Ann. Inst. Fourier (Grenoble)}, 59(1):347--392, 2009.

\bibitem[LR90]{LR90}
Mich{\`e}le Loday-Richaud.
\newblock Introduction \`a\ la multisommabilit\'e.
\newblock {\em Gaz. Math.}, (44):41--63, 1990.

\bibitem[LR95]{LR95}
Mich{\`e}le Loday-Richaud.
\newblock Solutions formelles des syst\`emes diff\'erentiels lin\'eaires
  m\'eromorphes et sommation.
\newblock {\em Exposition. Math.}, 13(2-3):116--162, 1995.

\bibitem[Mal95]{M95}
Bernard Malgrange.
\newblock Sommation des s\'eries divergentes.
\newblock {\em Exposition. Math.}, 13(2-3):163--222, 1995.

\bibitem[MR92]{MR}
B.~Malgrange and J.-P. Ramis.
\newblock Fonctions multisommables.
\newblock {\em Ann. Inst. Fourier (Grenoble)}, 42(1-2):353--368, 1992.

\bibitem[MZ00]{MZ}
Fabienne Marotte and Changgui Zhang.
\newblock Multisommabilit\'e des s\'eries enti\`eres solutions formelles d'une
  \'equation aux {$q$}-diff\'erences lin\'eaire analytique.
\newblock {\em Ann. Inst. Fourier (Grenoble)}, 50(6):1859--1890 (2001), 2000.

\bibitem[Ram92]{R92}
Jean-Pierre Ramis.
\newblock About the growth of entire functions solutions of linear algebraic
  {$q$}-difference equations.
\newblock {\em Ann. Fac. Sci. Toulouse Math. (6)}, 1(1):53--94, 1992.

\bibitem[Ram93]{R93}
Jean-Pierre Ramis.
\newblock S\'eries divergentes et th\'eories asymptotiques.
\newblock {\em Bull. Soc. Math. France}, 121(Panoramas et Syntheses,
  suppl.):74, 1993.

\bibitem[RM90]{RM1}
J.-P. Ramis and J.~Martinet.
\newblock Th\'eorie de {G}alois diff\'erentielle et resommation.
\newblock In {\em Computer algebra and differential equations}, Comput. Math.
  Appl., pages 117--214. Academic Press, London, 1990.

\bibitem[RS07]{RS07}
Jean-Pierre Ramis and Jacques Sauloy.
\newblock The {$q$}-analogue of the wild fundamental group. {I}.
\newblock In {\em Algebraic, analytic and geometric aspects of complex
  differential equations and their deformations. {P}ainlev\'e hierarchies},
  RIMS K\^oky\^uroku Bessatsu, B2, pages 167--193. Res. Inst. Math. Sci.
  (RIMS), Kyoto, 2007.

\bibitem[RS09]{RS09}
Jean-Pierre Ramis and Jacques Sauloy.
\newblock The {$q$}-analogue of the wild fundamental group. {II}.
\newblock {\em Ast\'erisque}, (323):301--324, 2009.

\bibitem[RSZ13]{RSZ}
Jean-Pierre Ramis, Jacques Sauloy, and Changgui Zhang.
\newblock Local analytic classification of {$q$}-difference equations.
\newblock {\em Ast\'erisque}, (355):vi+151, 2013.

\bibitem[RZ02]{RZ}
Jean-Pierre Ramis and Changgui Zhang.
\newblock D\'eveloppement asymptotique {$q$}-{G}evrey et fonction th\^eta de
  {J}acobi.
\newblock {\em C. R. Math. Acad. Sci. Paris}, 335(11):899--902, 2002.

\bibitem[Sau00]{S00}
Jacques Sauloy.
\newblock Syst\`emes aux {$q$}-diff\'erences singuliers r\'eguliers:
  classification, matrice de connexion et monodromie.
\newblock {\em Ann. Inst. Fourier (Grenoble)}, 50(4):1021--1071, 2000.

\bibitem[Sau04a]{S04b}
Jacques Sauloy.
\newblock Algebraic construction of the {S}tokes sheaf for irregular linear
  {$q$}-difference equations.
\newblock {\em Ast\'erisque}, (296):227--251, 2004.
\newblock Analyse complexe, syst{\`e}mes dynamiques, sommabilit{\'e} des
  s{\'e}ries divergentes et th{\'e}ories galoisiennes. I.

\bibitem[Sau04b]{S04}
Jacques Sauloy.
\newblock La filtration canonique par les pentes d'un module aux
  {$q$}-diff\'erences et le gradu\'e associ\'e.
\newblock {\em Ann. Inst. Fourier (Grenoble)}, 54(1):181--210, 2004.

\bibitem[Sin09]{S09}
Michael~F. Singer.
\newblock Introduction to the {G}alois theory of linear differential equations.
\newblock In {\em Algebraic theory of differential equations}, volume 357 of
  {\em London Math. Soc. Lecture Note Ser.}, pages 1--82. Cambridge Univ.
  Press, Cambridge, 2009.

\bibitem[Trj33]{Trj}
Waldemar~Joseph Trjitzinsky.
\newblock Analytic theory of linear {$q$}-difference equations.
\newblock {\em Acta Math.}, 61(1):1--38, 1933.

\bibitem[vdPR07]{vdPR}
Marius van~der Put and Marc Reversat.
\newblock Galois theory of {$q$}-difference equations.
\newblock {\em Ann. Fac. Sci. Toulouse Math. (6)}, 16(3):665--718, 2007.

\bibitem[vdPS03]{VdPS}
Marius van~der Put and Michael~F. Singer.
\newblock {\em Galois theory of linear differential equations}, volume 328 of
  {\em Grundlehren der Mathematischen Wissenschaften [Fundamental Principles of
  Mathematical Sciences]}.
\newblock Springer-Verlag, Berlin, 2003.

\bibitem[Zha99]{Z99}
Changgui Zhang.
\newblock D\'eveloppements asymptotiques {$q$}-{G}evrey et s\'eries
  {$Gq$}-sommables.
\newblock {\em Ann. Inst. Fourier (Grenoble)}, 49(1):vi--vii, x, 227--261,
  1999.

\bibitem[Zha00]{Z00}
Changgui Zhang.
\newblock Transformations de {$q$}-{B}orel-{L}aplace au moyen de la fonction
  th\^eta de {J}acobi.
\newblock {\em C. R. Acad. Sci. Paris S\'er. I Math.}, 331(1):31--34, 2000.

\bibitem[Zha01]{Z01}
Changgui Zhang.
\newblock Sur la fonction {$q$}-gamma de {J}ackson.
\newblock {\em Aequationes Math.}, 62(1-2):60--78, 2001.

\bibitem[Zha02]{Z02}
Changgui Zhang.
\newblock Une sommation discr\`ete pour des \'equations aux {$q$}-diff\'erences
  lin\'eaires et \`a coefficients analytiques: th\'eorie g\'en\'erale et
  exemples.
\newblock In {\em Differential equations and the {S}tokes phenomenon}, pages
  309--329. World Sci. Publ., River Edge, NJ, 2002.

\bibitem[Zha03]{Z03}
Changgui Zhang.
\newblock Sur les fonctions {$q$}-{B}essel de {J}ackson.
\newblock {\em J. Approx. Theory}, 122(2):208--223, 2003.

\end{thebibliography}

\end{document}